\DeclareMathOperator{\diag}{diag} 
\newcommand{\er}{\mathbb{R}}
\newcommand{\cee}{\mathbb{C}}
\newcommand{\enn}{\mathbb{N}}
\newcommand{\zet}{\mathbb{Z}}
\newcommand{\lam}{\lambda}
\newcommand{\Lam}{\Lambda}
\newcommand{\bol}{\hfill\square\\}
\newcommand{\til}{\tilde}
\newcommand{\wtil}{\widetilde}
\renewcommand{\sp}{\mathrm{sp}\ }
\newcommand{\res}{\mathrm{Res}}
\newcommand{\mult}{\mathrm{mult}}
\newcommand{\cls}{\mathrm{cls}}
\newtheorem{theorem}{Theorem}[section]
\newtheorem{lemma}[theorem]{Lemma}
\newtheorem{proposition}[theorem]{Proposition}
\newtheorem{example}[theorem]{Example}
\newtheorem{corollary}[theorem]{Corollary}
\theoremstyle{definition}
\newtheorem{definition}[theorem]{Definition}
\theoremstyle{remark}
\newtheorem{remark}[theorem]{Remark}
\numberwithin{equation}{section}
\title{Equilibrium problem for the eigenvalues of banded
block Toeplitz matrices}
\date{}
\author{Steven Delvaux\footnotemark[1]}
\begin{document}

\renewcommand{\thefootnote}{\fnsymbol{footnote}}
\footnotetext[1]{Department of Mathematics, University of Leuven (KU Leuven),
Celestijnenlaan 200B, B-3001 Leuven, Belgium. email:
steven.delvaux\symbol{'100}wis.kuleuven.be. The author is a Postdoctoral Fellow
of the Fund for Scientific Research - Flanders (Belgium).}

\maketitle

\begin{abstract}
We consider banded block Toeplitz matrices $T_n$ with $n$ block rows and
columns. We show that under certain technical assumptions, the normalized
eigenvalue counting measure of $T_n$ for $n\to\infty$ weakly converges to one
component of the unique vector of measures that minimizes a certain energy
functional. In this way we generalize a recent result of Duits and Kuijlaars
for the scalar case. Along the way we also obtain an equilibrium problem
associated to an arbitrary algebraic curve, not necessarily related to a block
Toeplitz matrix.

For banded block Toeplitz matrices, there are several new phenomena that do not
occur in the scalar case: (i) The total masses of the equilibrium measures do
not necessarily form a simple arithmetic series but in general are obtained
through a combinatorial rule; (ii) The limiting eigenvalue distribution may
contain point masses, and there may be attracting point sources in the
equilibrium problem; (iii) More seriously, there are examples where the
connection between the limiting eigenvalue distribution of $T_n$ and the
solution to the equilibrium problem breaks down. We provide sufficient
conditions guaranteeing that no such breakdown occurs; in particular we show
this if $T_n$ is a Hessenberg matrix.
\end{abstract}
\maketitle                   






\setcounter{tocdepth}{1} \tableofcontents

\section{Introduction} \label{section:settingblock}
\label{section:intro}

Let $r\in\enn:=\{1,2,3,\ldots\}$ and let there be given a set of $r\times r$
matrices
$$A_k\in\cee^{r\times r},\qquad k=-\alpha,\ldots,\beta,$$ for some
$\alpha,\beta\in\enn$. These matrices are encoded by the matrix-valued Laurent
polynomial (also called \emph{symbol})
\begin{equation}\label{def:A} A(z) = A_{-\alpha}z^{-\alpha}+\ldots+A_{\beta}
z^{\beta}.
\end{equation}
For $n\in\enn$ define the \emph{block Toeplitz matrix} $T_n(A)$ associated to
the symbol $A(z)$ by
\begin{equation}T_n(A) =
\begin{pmatrix} A_{i-j}
\end{pmatrix}_{i,j=1}^{n}\in\cee^{rn\times rn},
\end{equation}
where we put $A_k\equiv 0$ if $k>\beta$ or $k<-\alpha$. Explicitly,
\begin{equation}
T_n(A) =
\begin{pmatrix} \label{blockToeplitzmatrix}
A_0 & \ldots & A_{-\alpha}  & & & 0 \\
\vdots & \ddots & & \ddots & &  \\
A_{\beta} &  & \ddots & & \ddots &  \\
 & \ddots & & \ddots &  & A_{-\alpha} \\
  & & \ddots &  & \ddots & \vdots \\
0   & & & A_{\beta} & \ldots & A_{0}
\end{pmatrix}_{rn\times rn}.
\end{equation}

In this paper we are interested in the limiting behavior of the eigenvalues of
$T_n(A)$ for $n\to\infty$. It is known that under certain technical assumptions
\cite{Widom1}, the eigenvalue counting measure has a weak limit supported on a
certain curve $\Gamma_0$ in the complex plane. An example of this phenomenon is
shown in Figure~\ref{fig:illint} for the symbol
\begin{equation}\label{example:sym:int}
A(z)=\begin{pmatrix}z^2&1\\
z^{-1}+z& 0\end{pmatrix};\end{equation} see B\"ottcher-Grudsky \cite{BG} for
many more illustrations of this type.

\begin{figure}[t]
\label{fig:illint}
\begin{center}
        \includegraphics[scale=0.35,angle=270]{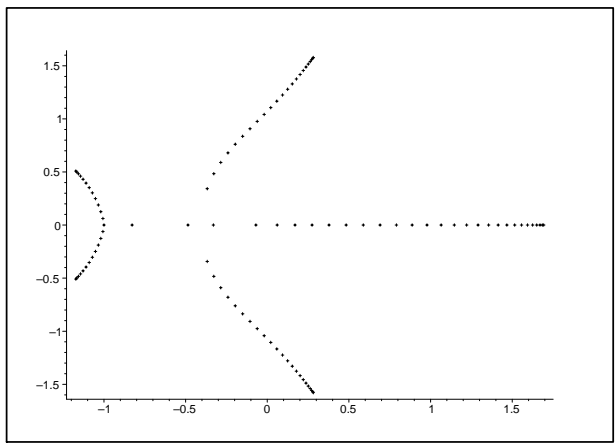}
\end{center}
\caption{Point plot in the complex plane of the eigenvalues of the banded block
Toeplitz matrix $T_n(A)$, $n=50$, with $r=2$ and symbol
\eqref{example:sym:int}, computed in Maple with 60 digit precision. For
$n\to\infty$ the eigenvalues accumulate on a curve $\Gamma_0\subset\cee$ which
consists of six analytic arcs connecting the points $-1$, $-0.42$, $-1.17\pm
0.51 i$, $0.28\pm 1.58 i$ and $1.70$ (using two digits of precision). Three
arcs are emanating with equal angles from $-1$ and four arcs from $-0.42$. The
limiting eigenvalue distribution of $T_n(A)$ for $n\to\infty$ exists as an
absolutely continuous measure $\mu_0$ on~$\Gamma_0$.}
\end{figure}

In the case of scalar banded Toeplitz matrices, $r=1$, it was recently shown by
Duits-Kuijlaars~\cite{DK} that the limiting eigenvalue distribution of $T_n(A)$
satisfies a (vector) equilibrium problem that is constructed out of the symbol.
The goal of this paper is to extend this result to the block
case~$r>1$.\smallskip

Let us first review some known results in the literature, following the
exposition in \cite{DD,DK}. We denote the eigenvalue spectrum of $T_n(A)$ by
$$ \sp T_n(A) = \{\lam\in\cee\mid \det(T_n(A)-\lam I_{rn})=0\},
$$
where in general we use $I_k$ to denote the identity matrix of size $k$ by $k$.
Following Schmidt-Spitzer~\cite{SchmidtSpitzer}, we define two limiting sets of
the spectrum: we define
$$ \liminf_{n\to\infty} \sp T_n(A)
$$
to be the set of all $\lam\in\cee$ for which there exists a sequence
$(\lam_n)_{n\in\enn}$, with $\lam_n\in\sp T_n(A)$ converging to $\lam$.
Similarly we define
$$ \limsup_{n\to\infty} \sp T_n(A)
$$
to be the set of all $\lam\in\cee$ for which there exists a sequence
$(\lam_n)_{n\in\enn}$, with $\lam_n\in\sp T_n(A)$ having a subsequence
converging to $\lam$.

Under certain assumptions \cite{Widom1}, the above limiting sets can be
described in terms of the algebraic equation
\begin{equation}\label{def:f} 0=f(z,\lam) := \det (A(z)-\lam I_r).
\end{equation}
Note that each entry of the matrix $A(z)-\lam I_r$ is a Laurent polynomial in
$z$, by virtue of \eqref{def:A}. Hence $f(z,\lam)=\det (A(z)-\lam I_r)$  is a
Laurent polynomial in $z$ as well, and we can write it in the form
\begin{equation}\label{f:expansion}
f(z,\lam) = \sum_{k=-q}^{p} f_k(\lam) z^k,
\end{equation}
for certain $p,q\in\enn\cup\{0\}$. The coefficients $f_k(\lam)$ are polynomials
in $\lam$ of degree at most $r$. More precisely,
\begin{equation}\label{f:expansionlargelambda}
\deg f_k=\left\{\begin{array}{ll}
 r, & \textrm{if }k=0, \\
\leq r-1, & \textrm{if }k\neq 0,\end{array}\right.
\end{equation}
on account of \eqref{def:f}--\eqref{f:expansion}. We assume that the numbers
$p,q$ in \eqref{f:expansion} are such that the outermost coefficients
$f_{-q}(\lam)$ and $f_p(\lam)$ are not identically zero as a function of
$\lam$. To avoid trivial cases we will always assume that
\begin{equation}\label{assumption:pq}
\min(p,q)\geq 1.
\end{equation} This is justified since if $\min(p,q)=0$, then
$\det (T_n(A)-\lam I_{rn})=C_0(\lam)f_0(\lam)^{n+\alpha}$ for a certain
rational function $C_0(\lam)$, by Proposition~\ref{proposition:Widomformula}
below. In that case the eigenvalues of $T_n(A)$ are trivially obtained.
\smallskip

For any $\lam\in\cee$ with $f_{p}(\lam)\neq 0$, we consider
\begin{equation}\label{Laurentpol:shifted} z^q f(z,\lam) = \sum_{k=-q}^{p}
f_{k}(\lam) z^{k+q}
\end{equation}
as a polynomial in $z$ of degree $p+q$. We order its roots $z=z(\lam)$
(counting multiplicities) by absolute value as
\begin{equation}\label{ordering:roots} 0\leq |z_1(\lam)|\leq |z_2(\lam)|\leq\ldots\leq
|z_{p+q}(\lam)|.
\end{equation}
If $\lam$ is such that two or more subsequent roots in \eqref{ordering:roots}
have the same absolute value, then we may arbitrarily label them so that
\eqref{ordering:roots} is satisfied. For the special values of $\lam$ for which
$f_{p}(\lam)= 0$, the polynomial \eqref{Laurentpol:shifted} has degree less
than $p+q$, say $p+q-j$, and in that case we order its roots
$z_1(\lambda),\ldots,z_{p+q-j}(\lambda)$ as in \eqref{ordering:roots} and set
$z_{p+q-j+1}(\lambda)=\ldots=z_{p+q}(\lambda)=\infty$, compare with \cite{DD}.
We also use the latter convention if $\lam\in\cee$ is such that
$f(z,\lam)\equiv 0$. Thus in that case we put $z_j(\lam)=\infty$ for
all~$j=1,\ldots,p+q$.

Each of the roots $z_j(\lam)$ is finite and non-zero, except when $\lam$
belongs to the set
\begin{equation}\label{def:laml}
\Lambda := \{ \lam\in\cee\mid f_{-q}(\lam)
f_{p}(\lam)=0\}.
\end{equation}
By virtue of \eqref{f:expansionlargelambda}, the set $\Lam$ has cardinality
$|\Lam|\leq 2r-2$. In particular $\Lam$ is empty in the scalar case $r=1$.
\smallskip

Define the set
\begin{equation}\label{def:curves0} \Gamma_0 := \{\lam\in\cee\mid |z_{q}(\lam)| = |z_{q+1}(\lam)|
\}.
\end{equation}
For the case of scalar banded Toeplitz matrices $r=1$ it is known that
$\Gamma_0$ is a curve consisting of a finite number of analytic arcs and having
no isolated points, and moreover the eigenvalues of $T_n(A)$ accumulate on
$\Gamma_0$ in the sense that
\begin{equation}\label{ss:scalar} \liminf_{n\to\infty} \sp T_n(A) = \limsup_{n\to\infty} \sp T_n(A) = \Gamma_0.
\end{equation}
These results were shown by Schmidt and Spitzer \cite{SchmidtSpitzer}. The same
authors also showed that the limiting eigenvalue distribution $\mu_0$ of
$T_n(A)$ exists as an absolutely continuous measure on $\Gamma_0$. An explicit
expression for the measure $\mu_0$ was obtained by Hirschman \cite{Hirschman}.
An alternative expression for $\mu_0$ is given by \eqref{def:measuresk} below
with $k=0$, cf.\ \cite{DK}. Further results about $\mu_0$ in the scalar case
$r=1$ can be found in \cite{BG,BS2,DK,Hirschman,Ullman}.

For the case of banded block Toeplitz matrices, $r>1$, Widom \cite{Widom1}
showed that the above results remain essentially valid, provided that the
following hypotheses H2 and H3 hold true. The hypothesis H1 is stated for
further reference.

\begin{itemize}
\item[H1.] The set $\Lam$ in \eqref{def:laml} is empty.
\item[H2.] The set $\Gamma_0$ in \eqref{def:curves0} is a subset of $\cee$ of $2$-dimensional Lebesgue measure zero.
\item[H3.] The set $G_0$ in
\eqref{def:tilGamma0} below has finite cardinality.
\end{itemize}

In the hypothesis H3 we define the set \begin{equation}\label{def:tilGamma0}
G_0 := \{\lam\in\cee\setminus\Gamma_0\mid C_{0}(\lam)=0\},\end{equation}  with
\begin{equation}\label{def:C0} C_{0}(\lam) := \det\left( \frac{1}{2\pi
i}\int_{\sigma_0} z^{\mu-\nu} (A(z)-\lam I_r)^{-1}\frac{dz}{z}
\right)_{\mu,\nu=1,\ldots,\alpha},\qquad\textrm{for}\
\lam\in\cee\setminus\Gamma_0,
\end{equation}
with $\alpha$ in \eqref{def:A}, and where $\sigma_0$ is a counterclockwise
oriented closed Jordan curve enclosing $z=0$ and the points $z_j(\lam)$,
$j=1,\ldots,q$, but no other roots of $f(z,\lam)=0$. In \eqref{def:C0} the
determinant is taken of a matrix of size $r\alpha$ by $r\alpha$ and the
integral is defined entry-wise. For background, generalizations and alternative
representations for the function $C_{0}(\lam)$ we refer to
\cite{BS2,BS,Widom1,Widom2} ($C_0$ corresponds to the function $E[\varphi]$ in
\cite{Widom1,Widom2}),
see also Prop.~\ref{proposition:Widomformula} below.

Widom shows that under the above hypotheses H2 and H3, one has that
\begin{equation}\label{ss:block} \liminf_{n\to\infty} \sp T_n(A) = \limsup_{n\to\infty} \sp
T_n(A) = \Gamma_0\cup G_0.
\end{equation}
It can be shown that $G_0$ is empty in the scalar case $r=1$, and then
\eqref{ss:block} reduces to \eqref{ss:scalar}. Under H2 and H3, Widom also
observes that Hirschman's expression \cite{Hirschman} for the limiting
eigenvalue distribution $\mu_0$ remains valid.

If hypothesis H1 fails then the limiting eigenvalue distribution of $T_n(A)$
may contain point masses. This is implicit in Widom \cite{Widom1} and will be
described in detail in this paper. On the other hand, if H2 fails then Widom's
results are not true in the stated form. Usually they remain valid in a
modified form however, see Sections~\ref{subsection:Gammak2D} and
\ref{subsection:exampledegenerate} below.

The failure of hypothesis H3 is more serious, and it may cause the results to
break down (see e.g.\ Section~\ref{subsection:exampledegenerateII}). Therefore
it is important to provide sufficient conditions guaranteeing that H3 holds
true. We will provide two such conditions; in both cases H2 will hold true as
well.

\begin{proposition}\label{prop:twoconditions}(Sufficient conditions for H2 and
H3).
\begin{itemize}
\item[(a)] Suppose that the set $\cee\setminus\Gamma_0$ is connected and moreover
$\Gamma_0$ does not have any interior points. Then H2 and H3 hold true.
\item[(b)] Suppose that $A(z)$ is the symbol of a lower Hessenberg matrix, in the sense
that in the entry-wise expansion $T_n(A) = (t_{i,j})_{i,j=0}^{rn-1}$ we have
$t_{i,j}=0$ whenever $j>i+1$, i.e., all the entries above the first scalar
superdiagonal of $T_n(A)$ vanish. Then H2 (or more generally H2$k$ below) and
H3 hold true.
\end{itemize}
\end{proposition}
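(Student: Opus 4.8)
I treat (a) and (b) separately, aiming to reduce (b) to (a). The common underlying fact is that $\Gamma_0$ (and each curve $\Gamma_k$) is a \emph{semialgebraic} subset of $\cee\cong\er^2$: writing $z=x+iy$, $\lam=u+iv$, the relation $f(z,\lam)=0$ is a pair of real polynomial equations and the modulus orderings in \eqref{ordering:roots}--\eqref{def:curves0} are polynomial inequalities in $x,y,u,v$; a semialgebraic subset of $\er^2$ with empty interior has dimension at most one, hence two-dimensional Lebesgue measure zero, which will give the H2-type statements from emptiness of the interior. Likewise $C_0(\lam)$ is an \emph{algebraic} function of $\lam$: the contour integral in \eqref{def:C0} is a finite sum of residues, at $z=0$ (rational in $\lam$) and at $z=z_1(\lam),\dots,z_q(\lam)$ (rational in $\lam$ and the algebraic functions $z_j(\lam)$), so the determinant is algebraic over $\lam$; an algebraic function that is not identically zero has only finitely many zeros in $\cee$.

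\emph{Part (a).} I first record that $\Gamma_0$ is bounded: for $|\lam|\to\infty$ the coefficient $f_0$ has degree $r$ and dominates all $f_k$ with $k\neq0$ (degree $\leq r-1$) by \eqref{f:expansionlargelambda}, so $z^qf(z,\lam)$ has $q$ roots tending to $0$ and $p$ roots tending to $\infty$; thus $|z_q(\lam)|<|z_{q+1}(\lam)|$ for large $|\lam|$, and $\Omega:=\cee\setminus\Gamma_0$ is a connected open neighbourhood of $\infty$. Now H2 is immediate from the semialgebraic principle together with the hypothesis that $\Gamma_0$ has empty interior. For H3, on the connected set $\Omega$ the contour $\sigma_0$ in \eqref{def:C0} can be chosen in a locally, hence globally, consistent way, so $C_0$ is a single-valued holomorphic function on $\Omega$. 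Expanding \eqref{def:C0} as $\lam\to\infty$, where $(A(z)-\lam I_r)^{-1}=-\lam^{-1}I_r+O(\lam^{-2})$ and $\sigma_0$ may be taken to be a small circle about $z=0$, one gets $\tfrac1{2\pi i}\int_{\sigma_0}z^{\mu-\nu}(A(z)-\lam I_r)^{-1}\tfrac{dz}{z}=-\lam^{-1}\delta_{\mu\nu}I_r+O(\lam^{-2})$, hence $C_0(\lam)=(-1)^{r\alpha}\lam^{-r\alpha}(1+O(\lam^{-1}))$; in particular $C_0\not\equiv0$ on $\Omega$. Since $C_0$ is algebraic and not identically zero, it has finitely many zeros in $\cee$, so $G_0$ is finite, which is H3. (Here the connectedness hypothesis is essential: it is what forbids $C_0$ from reducing to the zero branch on some bounded complementary component, which would make $G_0$ infinite.)

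\emph{Part (b).} I would begin by decoding the Hessenberg hypothesis. Placing each block $A_k$ in $T_n(A)$ and imposing $t_{i,j}=0$ for $j>i+1$ forces $A_{-k}=0$ for all $k\geq2$, forces $A_{-1}$ to vanish except possibly in its $(r,1)$ entry $c_0:=(A_{-1})_{r,1}$, and forces $A_0$ to be lower Hessenberg as an $r\times r$ matrix. Consequently the coefficient of the lowest power of $z$ in $f(z,\lam)=\det(A(z)-\lam I_r)$ comes solely from the $(r,1)$-cofactor and equals the \emph{constant} $\pm c_0\prod_{i=1}^{r-1}(A_0)_{i,i+1}$; the degenerate subcases ($c_0=0$, or $(A_0)_{i,i+1}=0$ for some $i$) all yield $q=0$, excluded by \eqref{assumption:pq}, so $q=1$ and $f_{-q}=f_{-1}$ is a nonzero constant. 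It then remains, by part (a), to show that $\Gamma_0$ has empty interior and that $\cee\setminus\Gamma_0$ is connected; this is the block analogue of the classical fact (Schmidt--Spitzer, Ullman) that the spectral curve of a Toeplitz operator having a single ``small'' root ($q=1$) is a finite union of analytic arcs without loops. I would establish it by a maximum-principle argument applied to the holomorphic function $z_1(\lam)=\prod_{j\leq q}z_j(\lam)$ on $\cee\setminus\Gamma_0$, together with the asymptotics $z_1(\lam)\to0$, $z_2(\lam),\dots,z_{p+1}(\lam)\to\infty$ as $\lam\to\infty$; a key point ruling out that $\Gamma_0$ be two-dimensional is that $f_{-1}$ is a nonzero constant while $\deg f_0=r$, so $f(z,\lam)$ is not a polynomial in $z^m$ for any $m\geq2$ and the full root set cannot be invariant under $z\mapsto e^{2\pi i/m}z$; combined with $q=1$ and the asymptotics this contradicts $|z_1|=|z_2|$ holding on an open set. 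Once $\cee\setminus\Gamma_0$ is known to be connected and $\Gamma_0$ has empty interior, (the proof of) part (a) yields H2 and H3; the statements H2$k$ for the curves relevant to the equilibrium problem are obtained by the same argument, again using $q=1$ and the structure of $f$.

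\emph{Main obstacle.} The genuinely delicate step is the last part of (b): proving, in the block case, that $\cee\setminus\Gamma_0$ is connected (equivalently, that $\Gamma_0$ has no loops) when $q=1$. In the scalar case this is classical, but in the block setting the roots $z_j(\lam)$ can coincide in modulus in more intricate ways and $f_p(\lam)$ may have zeros (so that $\Lam\neq\emptyset$, and point masses can occur); controlling these features inside a maximum-principle argument for $z_1(\lam)$ on a hypothetical bounded complementary component is where the real work lies. The remaining ingredients --- the semialgebraic and algebraic bookkeeping, the $\lam\to\infty$ expansion of $C_0$, and the extraction of $q=1$ from the Hessenberg condition --- are routine.
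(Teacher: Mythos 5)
Your part~(a) is essentially the paper's argument. For H2, the paper appeals to Lemma~\ref{lemma:structureGammak} (which already says that near any $\lam_0$ either $\Gamma_0$ fills a full disk or is a finite union of arcs and points, so empty interior gives measure zero), whereas you use the semialgebraic dimension criterion; both are fine. For H3, both arguments compute the $\lam\to\infty$ expansion of $C_0$ to conclude $C_0\not\equiv 0$ on the connected set $\cee\setminus\Gamma_0$; you are slightly more explicit in invoking algebraicity of $C_0$ to pass from ``not identically zero'' to ``finitely many zeros,'' which is a welcome addition to the paper's rather terse ``follows from analyticity.''

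Part~(b), however, is a genuinely different route from the paper, and it has a real gap exactly where you flag it, plus a second problem you do not flag. You reduce (b) to (a) by asserting that, under the Hessenberg hypothesis, $\Gamma_0$ has empty interior \emph{and} $\cee\setminus\Gamma_0$ is connected. Your sketch (maximum principle for $z_1$, no $z\mapsto e^{2\pi i/m}z$ symmetry) only targets the ``no interior'' half; you do not produce, even in outline, an argument ruling out loops of $\Gamma_0$, i.e.\ bounded complementary components. The paper does not prove this claim, does not use it, and indeed takes a much cheaper path: it proves that the \emph{Riemann surface} $\mathcal R$ of $f(z,\lam)=0$ is connected (Lemma~\ref{lemma:Rsurface}), which is easy precisely because $q=1$ and $f_{-1}$ is a nonzero constant force a \emph{unique} point $(\infty,0)$ of $\mathcal R$ above $z=0$, to which every sheet can be joined by a path. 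From the connectedness of $\mathcal R$ the paper then derives H2$k$ (Lemma~\ref{lemma:Gammak1D}: two roots cannot have identically equal modulus on a disk, else propagating the constant-ratio relation over the connected surface contradicts the fact that exactly one root tends to $0$ at $\infty$) and H3 (Lemma~\ref{lemma:widom:explicit}: if one $C_{j-1}$ vanished on a disk, then by analytic continuation over the connected surface \emph{all} the $C_{j-1}$ would vanish, forcing $\det T_n(A(z)-\lam I_r)\equiv 0$ via Widom's identity \eqref{Widom:2}, a contradiction). Note that connectedness of $\mathcal R$ does \emph{not} imply connectedness of $\cee\setminus\Gamma_0$, so your intended reduction is not a repackaging of the paper's argument.

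The second problem: even if you could prove $\cee\setminus\Gamma_0$ connected and $\Gamma_0$ of empty interior, part~(a) only hands you H2 and H3, not H2$k$ for $k\ne 0$. Your sentence ``the statements H2$k$ \dots{} are obtained by the same argument'' would need an independent argument for each $\Gamma_k$, since (a) says nothing about $\Gamma_k$ with $k\neq 0$; the paper obtains all the H2$k$ at once from Lemma~\ref{lemma:Gammak1D} using the connectedness of $\mathcal R$. Your decoding of the Hessenberg hypothesis (i.e.\ $q=1$, $f_{-1}$ a nonzero constant) is correct and matches the paper; what is missing is the shift from ``connected plane complement'' to ``connected Riemann surface'' as the structural input.
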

Proposition~\ref{prop:twoconditions} will be proved in
Section~\ref{subsection:proof:proptwo}. Incidentally, the assumption
\eqref{assumption:pq} implies that all the entries on the first scalar
superdiagonal of the Hessenberg matrix $T_n(A)$ in Part~(b) are
non-zero.\smallskip

Finally we discuss the results of Duits-Kuijlaars \cite{DK}. These authors
noticed that in addition to the set $\Gamma_0$ in \eqref{def:curves0}, an
important role is played by the sets
\begin{equation}\label{def:Gammak} \Gamma_k := \{\lam\in\cee\mid |z_{q+k}(\lam)| = |z_{q+k+1}(\lam)|
\},
\end{equation}
for $k=-q+1,\ldots,p-1$. In the scalar case $r=1$, each set $\Gamma_k$ is a
curve consisting of finitely many analytic arcs. We equip every analytic arc of
$\Gamma_k$ with an orientation and we define the $+$-side (or $-$-side) as the
side on the left (or right) of the arc when traversing it according to its
orientation.

Duits and Kuijlaars then define the measure
\begin{equation} \label{def:measuresk} d\mu_k(\lam) = \frac{1}{2\pi i}\sum_{j=1}^{q+k}\left(
\frac{z_{j+}'(\lam)}{z_{j+}(\lam)}-\frac{z_{j-}'(\lam)}{z_{j-}(\lam)}
\right)d\lam
\end{equation}
on the curve $\Gamma_k$. Here $d\lam$ denotes the complex line element on each
analytic arc of $\Gamma_k$, according to the chosen orientation of $\Gamma_k$.
In addition, $z_{j+}(\lam)$ and $z_{j_-}(\lam)$ denote the boundary values of
$z_j(\lambda)$ from the $+$-side and $-$-side of $\Gamma_k$, respectively.
These boundary values exist for all but finitely many points. The definition
\eqref{def:measuresk} is independent of the choice of the orientation of
$\Gamma_k$.

In the scalar case $r=1$, it is shown in \cite{DK} that the measures $\mu_k$
are the minimizers of a certain (vector) equilibrium problem from potential
theory. Moreover, $\mu_k$ is the weak limit for $n\to\infty$ of the normalized
counting measures of the \emph{$k$th generalized eigenvalues} of the Toeplitz
matrix $T_n(A)$. The usual eigenvalues correspond to $k=0$.

In this paper we wish to extend these results to the block case $r\geq 2$.
Instead of hypothesis H2 we are then led to the following generalization H2$k$:
\begin{flushleft}
H2$k$. Each set $\Gamma_k$ in \eqref{def:Gammak}, $k=-q+1,\ldots,p-1$, is a
subset of $\cee$ of 2-dimensional Lebesgue measure zero.
\end{flushleft}
To the algebraic curve $f(z,\lam)=0$ we will associate an equilibrium problem,
even when hypotheses H1 and/or H2$k$ fail. Then the equilibrium problem may
contain point sources (if H1 fails) or the definition of $\Gamma_k$ in
\eqref{def:Gammak} needs to be modified (if H2$k$ fails).

The measure $\mu_0$ will be one of the measures involved in the equilibrium
problem. This measure will be the absolutely continuous part of the limiting
eigenvalue distribution of $T_n(A)$, provided that hypothesis H3, or a suitable
analogue thereof if H2 fails, holds true. In particular this will be the case
for the two situations in Prop.~\ref{prop:twoconditions}. There is also an
interpretation of the measure $\mu_k$, $k\neq 0$, as the absolutely continuous
part of the limiting distribution of the $k$th generalized eigenvalues of
$T_n(A)$, in the spirit of \cite{DD,DK}; this will be briefly discussed in
Section~\ref{subsection:geneig}.

In the next section, we associate an equilibrium problem to an \emph{arbitrary}
algebraic curve $f(z,\lam)=0$ as in \eqref{f:expansion}, which is not
necessarily defined from a block Toeplitz matrix. In
Section~\ref{section:equiltoep} we apply this to banded block Toeplitz matrices
$T_n(A)$. In Section~\ref{section:casestudy:scalarband} we specialize our
results to the case where $T_n(A)$ has a scalar banded structure.
Section~\ref{section:proofs} contains the proofs of our main results.
Section~\ref{section:examples} illustrates our results for some examples.
Finally, Section~\ref{section:conclusion} contains some concluding remarks.

\section{Equilibrium problem associated to an arbitrary algebraic curve}
\label{section:equil:arbitrarycurve}

\subsection{Definitions} \label{subsection:mk}

In this section we show how an equilibrium problem can be associated to an
arbitrary algebraic curve. We consider an algebraic curve which is written in
the form
\begin{equation}\label{f:expansion:arb} f(z,\lam) =
\sum_{k=-q}^{p} f_k(\lam) z^k = 0,
\end{equation}
where $f_k(\lam)$, $k=-q,\ldots,p$ are polynomials, and where $p,q\geq 1$ are
such that the outermost polynomials $f_{-q}(\lam)$ and $f_p(\lam)$ are not
identically zero. Note that the numbers $p$ and $q$ in \eqref{f:expansion:arb}
do not have an absolute meaning; indeed by multiplying $f$ with $z^j$,
$j\in\zet$, the indices $p$ and $q$ are shifted to $p+j$ and $q+j$
respectively. The reason why we write \eqref{f:expansion:arb} in its present
form is because of the applications to banded block Toeplitz matrices.


Denote
\begin{equation}\label{def:degreepol}
r:=\max_{k\in\{-q,\ldots,p\}} \deg f_k.
\end{equation}
This definition of $r$ is compatible with the one used before, by virtue of
\eqref{f:expansionlargelambda}.


Define the roots $z_j=z_j(\lam)$, $j=1,\ldots,p+q$ as in
\eqref{ordering:roots}, and define the sets $\Gamma_k$, $k=-q+1,\ldots,p-1$ as
in \eqref{def:Gammak}. The structure of the set $\Gamma_k$ is given by the
following result.

\begin{lemma}\label{lemma:structureGammak} (Structure of $\Gamma_k$).
Let $k\in\{-q+1,\ldots,p-1\}$. Then any point $\lam_0\in\cee$ has an open
neighborhood $U\subset\cee$ whose intersection with $\Gamma_k$ is either empty,
the singleton $\{\lam_0\}$, the entire neighborhood $U$, or a finite union of
analytic arcs moving from $\lam_0$ to the boundary $\partial U$ of the
neighborhood $U$, with the arcs intersecting only at the point $\lam_0$. A
similar statement holds true for $\lam_0=\infty$ provided that we consider
$\Gamma_k$ on the Riemann sphere $\overline{\cee}:=\cee\cup\{\infty\}$. The
isolated points of $\Gamma_k$ all belong to $\Lam$ in \eqref{def:laml}.
\end{lemma}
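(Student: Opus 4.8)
The plan is to analyze $\Gamma_k$ locally via the function $\lam\mapsto|z_{q+k}(\lam)|/|z_{q+k+1}(\lam)|$, which is well-defined and positive away from $\Lam$, so that $\Gamma_k\setminus\Lam$ is the zero set of $\log|z_{q+k}(\lam)|-\log|z_{q+k+1}(\lam)|$. Fix $\lam_0\in\cee$. If $\lam_0\notin\Lam$, I would distinguish two cases according to whether $|z_{q+k}(\lam_0)|=|z_{q+k+1}(\lam_0)|$ strictly separates the two groups of roots (i.e.\ $|z_{q+k-1}(\lam_0)|<|z_{q+k}(\lam_0)|=|z_{q+k+1}(\lam_0)|<|z_{q+k+2}(\lam_0)|$), or not. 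In the \emph{generic} situation the product $\zeta(\lam):=\prod_{j=1}^{q+k}z_j(\lam)$ is, near $\lam_0$, a well-defined analytic function: it is a symmetric function of a set of roots that stays separated by modulus from the rest, hence locally analytic by the implicit function theorem applied to the relevant factor of the resultant (alternatively, it is a coefficient ratio of the monic polynomial whose roots are $z_1,\dots,z_{q+k}$, itself obtained by contour integration of $z^m f'_z/f$ around a curve separating the two groups, as in the formula for $C_0$). Then $\Gamma_k$ near $\lam_0$ coincides with the zero set of $u(\lam):=\log|\zeta(\lam)|$, a real-analytic (indeed harmonic) function. If $u\equiv 0$ on a neighborhood, we are in the case "$U\subset\Gamma_k$"; otherwise $u$ is a nonconstant harmonic function, and its zero set is, by the standard local structure of nodal sets of harmonic functions (write $u=\Re g$ with $g$ analytic, $g(\lam)=c(\lam-\lam_0)^m(1+o(1))$, $c\neq0$), a union of $m$ analytic arcs through $\lam_0$ meeting only at $\lam_0$ — after shrinking $U$. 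When $|z_{q+k}(\lam_0)|=|z_{q+k+1}(\lam_0)|$ but this common value also equals $|z_{q+k-1}(\lam_0)|$ or $|z_{q+k+2}(\lam_0)|$, I would instead work with the product over the full block of roots sharing that modulus: the relevant modulus $|z_{q+k}(\lam)|$ is itself continuous, and on a neighborhood where only that block can collide, $\Gamma_k$ is contained in the (finite) union of the zero sets obtained by the various admissible labelings of the block, each again a nodal set of a harmonic function; the conclusion (empty, $\{\lam_0\}$, all of $U$, or finitely many arcs) is stable under finite unions provided the pieces all pass through $\lam_0$ or miss a punctured neighborhood of it, which one checks by continuity of the moduli.

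For $\lam_0\in\Lam$ (finitely many points, by $|\Lam|\le 2r-2$), some roots $z_j(\lam_0)$ are $0$ or $\infty$. Here I would factor out the offending roots: near such $\lam_0$ the polynomial $z^qf(z,\lam)$ has a fixed number of roots tending to $0$ and to $\infty$ as $\lam\to\lam_0$, and after dividing by the appropriate power of $z$ and rescaling, the remaining roots vary analytically; the condition defining $\Gamma_k$ then again reduces, on a punctured neighborhood of $\lam_0$, to the vanishing of $\log$ of a modulus of a locally analytic function with at worst an algebraic singularity at $\lam_0$ (a Puiseux series), whose nodal set still consists of finitely many analytic arcs accumulating only at $\lam_0$. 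This yields the same list of local possibilities and simultaneously proves the last sentence: an isolated point of $\Gamma_k$ can only arise when the harmonic function $u$ has a strict local extremum at $\lam_0$ with $u(\lam_0)=0$, which for a nonconstant harmonic function is impossible (maximum principle) — hence isolated points can occur only where the "harmonic function" description fails, i.e.\ only at points of $\Lam$.

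The statement for $\lam_0=\infty$ follows by the change of variable $\lam\mapsto 1/\lam$ together with the observation, from \eqref{f:expansionlargelambda}, that for large $|\lam|$ the leading behavior of the roots $z_j(\lam)$ is governed by $f_0(\lam)\sim c\lam^r$ against the lower-degree $f_k$, so that the moduli $|z_j(\lam)|$ have definite limits (possibly $0$ or $\infty$) and the same Puiseux-type analysis applies at $\lam=\infty$ on $\overline{\cee}$.

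The main obstacle is the bookkeeping at the values of $\lam_0$ where \emph{more} than two consecutive roots share a modulus (so that the labeling in \eqref{ordering:roots} is ambiguous): there $\Gamma_k$ is a priori only contained in a union of candidate nodal sets coming from the different labelings, and one must argue that (i) each candidate piece either passes through $\lam_0$ or stays away from a punctured neighborhood, and (ii) the actual set $\Gamma_k$, cut out by the genuinely labeling-independent quantity $|z_{q+k}(\lam)|-|z_{q+k+1}(\lam)|$, inherits the claimed structure. I expect this to go through by a compactness/continuity argument on the moduli together with the fact that, after shrinking $U$, only one "block" of equal moduli is active, but it is the step requiring the most care.
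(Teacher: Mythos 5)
Your opening paragraph correctly identifies the quantity that characterizes $\Gamma_k$ locally: the harmonic function $\log|z_{q+k}(\lam)|-\log|z_{q+k+1}(\lam)|$. In the next paragraph, however, you switch to the product $\zeta(\lam)=\prod_{j=1}^{q+k}z_j(\lam)$ and claim that $\Gamma_k$ near $\lam_0$ is the zero set of $u:=\log|\zeta|$. This is wrong on two counts. First, your justification for the local analyticity of $\zeta$ is that ``$\{z_1,\ldots,z_{q+k}\}$ stays separated by modulus from the rest''; but the case you are analyzing has $|z_{q+k}(\lam_0)|=|z_{q+k+1}(\lam_0)|$, which is precisely the failure of that separation, so a modulus-separating contour does not exist at $\lam_0$, and the product of the $q+k$ smallest roots is discontinuous across $\Gamma_k$ (indeed this very jump is what generates the density \eqref{def:measuresk}). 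Second, and more fundamentally, even if one fixes a local analytic branch to make $\zeta$ well-defined, the zero set of $\log|\zeta|$ is $\{|\zeta|=1\}$, which is unrelated to $\Gamma_k=\{|z_{q+k}|=|z_{q+k+1}|\}$. The object to feed into the nodal-set argument is the one you wrote first, namely $\log|z_{q+k}/z_{q+k+1}|$ for locally analytic (or Puiseux) branches of $z_{q+k}$, $z_{q+k+1}$ --- or, in the degenerate-block case, the finite family of harmonic functions $\log|z_i/z_j|$ over pairs in the block of roots sharing the critical modulus.

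Apart from this substitution, the strategy is the right one (Puiseux expansions, nodal sets of real-analytic/harmonic functions, maximum principle to rule out isolated points off $\Lam$, the chart $\lam\mapsto 1/\lam$ at infinity), and in fact it is essentially the Schmidt and Spitzer argument: the paper does not reproduce a proof but attributes the lemma to Widom (p.\ 312), who in turn relies on Schmidt and Spitzer. You also rightly flag the bookkeeping when more than two consecutive roots share a modulus, and the maximum-principle step deserves a little more care when $z_{q+k}/z_{q+k+1}$ has a branch point at $\lam_0$ (there one argues directly from the Puiseux form $c(\lam-\lam_0)^{m/\kappa}(1+o(1))$ of the logarithm of the ratio that the zero set is a union of arcs rather than an isolated point). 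With $\zeta$ replaced by the correct ratio, your argument becomes essentially the standard one.
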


Lemma~\ref{lemma:structureGammak} was observed for $k=0$ by Widom \cite[Page
312]{Widom1}, based on the similar result for the scalar case by Schmidt and
Spitzer \cite{SchmidtSpitzer}. The proof for $k\neq 0$ is exactly the~same. See
also Prop.~\ref{prop:componentsGammak} below for further information on
$\Gamma_k$.


In addition to the set $\Gamma_k$ we also introduce
\begin{eqnarray}\label{def:Gammak:wtil} \wtil\Gamma_k &:=&
\Gamma_k\setminus\{\textrm{isolated points of $\Gamma_k$}\} \\
\nonumber &=& \cls\{\lam\in\cee\setminus\Lam\mid |z_{q+k}(\lam)| =
|z_{q+k+1}(\lam)| \},
\end{eqnarray}
for $k=-q+1,\ldots,p-1$, where $\cls$ denotes the closure of a subset of
$\cee$.

Our next goal is to provide an expression for the total mass of the measure
$\mu_k$ in \eqref{def:measuresk}. To this end we need some auxiliary
definitions. The next definition is a variant of the so-called \emph{Newton
polygon}, see e.g.\ \cite{Goss}.

\begin{definition}\label{def:mk} (The numbers $m_k$).
We denote by $k\mapsto m_k$ the smallest concave function on $\{-q,\ldots,p\}$
for which $m_k\geq \deg f_k$ for all $k$. Formally,
$$ m_k = \max_{i\leq k\leq j} \left(\frac{j-k}{j-i}\deg f_i+\frac{k-i}{j-i}\deg f_j\right),
$$
where the maximum is taken over all integers $i,j$ with $-q\leq i\leq k$ and
$k\leq j\leq p$ and with the equalities $i=k$ and $j=k$ not holding
simultaneously.
\end{definition}

A graphical interpretation of Definition~\ref{def:mk} is as follows: consider
the grid points $(k,m_k)\in\zet^2$, $k=-q,\ldots,p$, and draw a line segment
between $(k,m_k)$ and $(k+1,m_{k+1})$, for $k=-q,\ldots,p-1$. This then results
in a curve which lies above the grid points $(k,\deg f_k)\in\zet^2$, and which
is the \lq lowest\rq\ concave, piecewise linear curve with this property.

Let us illustrate Definition~\ref{def:mk} for two examples.

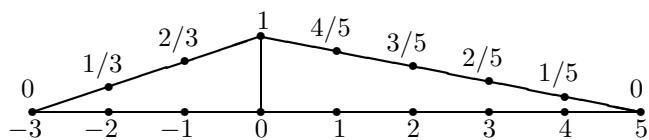
\begin{figure}[htb]
\vspace{-54mm}
\begin{center}
   \setlength{\unitlength}{1truemm}
   \begin{picture}(100,70)(-5,2)

       \put(0,0){\thicklines\circle*{1}}
       \put(-1.5,2){0}
       \put(10,0){\thicklines\circle*{1}}
       \put(10,3.333){\thicklines\circle*{1}}
       \put(6.5,5.333){1/3}
       \put(20,0){\thicklines\circle*{1}}
       \put(20,6.667){\thicklines\circle*{1}}
       \put(16.5,8.667){2/3}
       \put(30,0){\thicklines\circle*{1}}
       \put(30,10){\thicklines\circle*{1}}
       \put(29.5,11){1}
       \put(40,0){\thicklines\circle*{1}}
       \put(40,8){\thicklines\circle*{1}}
       \put(36.5,10.0){4/5}
       \put(50,0){\thicklines\circle*{1}}
       \put(50,6.0){\thicklines\circle*{1}}
       \put(46.5,8.0){3/5}
       \put(60,0){\thicklines\circle*{1}}
       \put(60,4){\thicklines\circle*{1}}
       \put(56.5,6){2/5}
       \put(70,0){\thicklines\circle*{1}}
       \put(70,2){\thicklines\circle*{1}}
       \put(66.5,4){1/5}
       \put(80,0){\thicklines\circle*{1}}
       \put(78.5,2){0}

       \put(0,0){\line(1,0){80}}
       \put(0,0){\thicklines\line(3,1){30}}
       \put(30,10){\thicklines\line(5,-1){50}}
       \put(30,0){\line(0,1){10}}

       \put(-3.3,-3.3){$-3$}
       \put(6.7, -3.3){$-2$}
       \put(16.7,-3.3){$-1$}
       \put(29.2,-3.3){$0$}
       \put(39.5,-3.3){$1$}
       \put(49.2,-3.3){$2$}
       \put(59.2,-3.3){$3$}
       \put(69.2,-3.3){$4$}
       \put(79.2,-3.3){$5$}

  \end{picture}
   \vspace{3mm}
   \caption{Illustration for Example~\ref{example:mk1}.}
   \label{fig:newton0}
\end{center}
\end{figure}

\begin{example}\label{example:mk1} For the situation in Duits-Kuijlaars \cite{DK}
we have $\deg f_k = 1$ if $k=0$ and $\deg f_k = 0$ if $k\neq 0$. Then we easily
find that
\begin{equation}\label{mk:Duits} (m_k)_{k=-q}^p = \left(0,\frac 1q,\frac
2q,\ldots,1,\ldots,\frac 2p,\frac 1p,0\right).
\end{equation}
Let us illustrate this if $q=3$ and $p=5$. In that case $(\deg
f_k)_{k=-3}^{5}=(0,0,0,1,0,0,0,0,0)$, and Figure~\ref{fig:newton0} shows how to
construct the concave, piecewise linear curve lying above the grid points
$(k,\deg f_k)$. From the figure we can then read off that $(m_k)_{k=-3}^{5} =
(0,\frac 13,\frac 23,1,\frac 45,\frac 35,\frac 25,\frac 15,0)$. Finally, we
observe that the number $m_k$ in \eqref{mk:Duits}, $k=-q+1,\ldots,p-1$, is
precisely the total mass of the measure $\mu_k$ in~\cite{DK}.
\end{example}

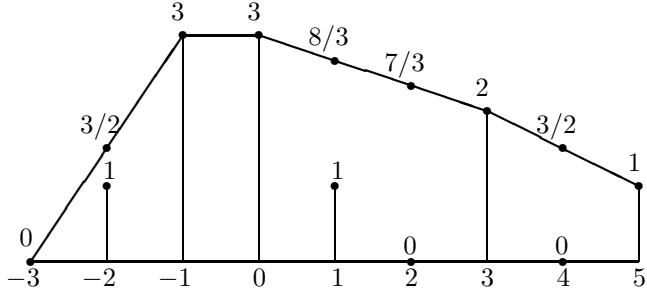
\begin{figure}[htb]
\vspace{-34mm}
\begin{center}
   \setlength{\unitlength}{1truemm}
   \begin{picture}(100,70)(-5,2)

       \put(0,0){\thicklines\circle*{1}}
       \put(-1.5,2){0}
       \put(10,10){\thicklines\circle*{1}}
       \put(9.5,11){1}
       \put(10,15){\thicklines\circle*{1}}
       \put(6.5,17){3/2}
       \put(20,30){\thicklines\circle*{1}}
       \put(18.5,32){3}
       \put(30,30){\thicklines\circle*{1}}
       \put(28.5,32){3}
       \put(40,10){\thicklines\circle*{1}}
       \put(39.5,11){1}
       \put(40,26.667){\thicklines\circle*{1}}
       \put(36.5,28.667){8/3}
       \put(50,0){\thicklines\circle*{1}}
       \put(49.0,1){0}
       \put(50,23.333){\thicklines\circle*{1}}
       \put(46.5,25.333){7/3}
       \put(60,20){\thicklines\circle*{1}}
       \put(58.5,22){2}
       \put(70,0){\thicklines\circle*{1}}
       \put(70,15){\thicklines\circle*{1}}
       \put(66.5,17){3/2}
       \put(69.0,1){0}
       \put(80,10){\thicklines\circle*{1}}
       \put(78.5,12){1}

       \put(0,0){\line(1,0){80}}
       \put(20,30){\thicklines\line(1,0){10}}
       \put(0,0){\thicklines\line(2,3){20}}
       \put(30,30){\thicklines\line(3,-1){30}}
       \put(60,20){\thicklines\line(2,-1){20}}
       \put(10,0){\line(0,1){10}}
       \put(20,0){\line(0,1){30}}
       \put(30,0){\line(0,1){30}}
       \put(40,0){\line(0,1){10}}
       \put(60,0){\line(0,1){20}}
       \put(80,0){\line(0,1){10}}

       \put(-3.3,-3.3){$-3$}
       \put(6.7, -3.3){$-2$}
       \put(16.7,-3.3){$-1$}
       \put(29.2,-3.3){$0$}
       \put(39.5,-3.3){$1$}
       \put(49.2,-3.3){$2$}
       \put(59.2,-3.3){$3$}
       \put(69.2,-3.3){$4$}
       \put(79.2,-3.3){$5$}

  \end{picture}
   \vspace{3mm}
   \caption{Illustration for Example~\ref{example:mk2}.}
   \label{fig:newton1}
\end{center}
\end{figure}

\begin{example}\label{example:mk2} Assume that $q=3$, $p=5$ and
$(\deg f_k)_{k=-3}^{5}=(0,1,3,3,1,0,2,0,1)$. Proceeding in a similar way as
before, we find that
$$(m_k)_{k=-3}^{5} = \left(0,\frac 32,3,3,\frac 83,\frac 73,2,\frac
32,1\right),$$ as illustrated in Figure~\ref{fig:newton1}.
\end{example}

Recall the definition of the set $\Lam$ in \eqref{def:laml}, and choose an
arbitrary but fixed labeling of the elements of this set, i.e.,
$\Lambda=:\{\lam_l\}_{l=1}^L$. Note that under hypothesis H1 we have that
$L:=|\Lam|=0$, so in that case we can ignore all the arguments involving the
numbers $(\lam_l)_{l=1}^L$ in what follows.

We need the following analogue of Definition~\ref{def:mk}.

\begin{definition}\label{def:mkl} (The numbers $m_k^{(l)}$).
Fix $l\in\{1,\ldots,L\}$. We denote by $k\mapsto m_k^{(l)}$ the largest convex
function on $\{-q,\ldots,p\}$ for which $m_k^{(l)}\leq \mult_{\lam-\lam_l} f_k$
for all $k$, where $\mult_{\lam-\lam_{l}}f_k$ denotes the multiplicity of
$\lam-\lam_{l}$ as a factor of $f_k(\lam)$. Formally,
$$ m_k^{(l)} = \min_{i\leq k\leq j} \left(\frac{j-k}{j-i}\mult_{\lam-\lam_{l}}f_i+\frac{k-i}{j-i}\mult_{\lam-\lam_{l}}f_j\right),
$$
where the minimum is taken over all integers $i,j$ with $-q\leq i\leq k$ and
$k\leq j\leq p$ and with the equalities $i=k$ and $j=k$ not holding
simultaneously.
\end{definition}

A graphical interpretation of Definition~\ref{def:mkl} is as follows: consider
the grid points $(k,m_k^{(l)})~\in~\zet^2$, $k=-q,\ldots,p$, and draw a line
segment between $(k,m_k^{(l)})$ and $(k+1,m_{k+1}^{(l)})$, for
$k=-q,\ldots,p-1$. This then results in a curve which lies below the grid
points $(k,\mult_{\lam-\lam_l} f_k)\in\zet^2$, and which is the \lq highest\rq\
convex, piecewise linear curve with this property.

\begin{figure}[htb]
\vspace{-44mm}
\begin{center}
   \setlength{\unitlength}{1truemm}
   \begin{picture}(100,70)(-5,2)

       \put(0,10){\thicklines\circle*{1}}
       \put(-2.0,12.5){1}
       \put(0,10){\line(0,1){20}}
       \put(10,10){\thicklines\circle*{1}}
       \put(10,10){\line(0,1){20}}
       \put(10,5){\thicklines\circle*{1}}
       \put(8,7){1/2}
       \put(8.0,12.5){1}
       \put(20,0){\thicklines\circle*{1}}
       \put(20,0){\line(0,1){30}}
       \put(18.0,2){0}
       \put(30,0){\thicklines\circle*{1}}
       \put(30,20){\thicklines\circle*{1}}
       \put(28.0,22.5){2}
       \put(28.0,2){0}
       \put(30,20){\line(0,1){10}}
       \put(40,0){\thicklines\circle*{1}}
       \put(40,10){\thicklines\circle*{1}}
       \put(38.0,12.5){1}
       \put(38.0,2){0}
       \put(40,10){\line(0,1){20}}
       \put(50,0){\thicklines\circle*{1}}
       \put(50,0){\line(0,1){30}}
       \put(48.0,2){0}
       \put(60,5){\thicklines\circle*{1}}
       \put(58,7){1/2}
       \put(60,10){\thicklines\circle*{1}}
       \put(58.0,12.5){1}
       \put(60,10){\line(0,1){20}}
       \put(70,10){\thicklines\circle*{1}}
       \put(70,10){\line(0,1){20}}
       \put(68.0,12.5){1}
       \put(80,20){\thicklines\circle*{1}}
       \put(80,20){\line(0,1){10}}
       \put(78.0,22.5){2}

       \put(0,10){\thicklines\line(2,-1){20}}
       \put(20,0){\thicklines\line(1,0){30}}
       \put(50,0){\thicklines\line(2,1){20}}
       \put(70,10){\thicklines\line(1,1){10}}

       \put(0,0){\line(1,0){80}}
       \put(-3.3,-3.3){$-3$}
       \put(6.7, -3.3){$-2$}
       \put(16.7,-3.3){$-1$}
       \put(29.2,-3.3){$0$}
       \put(39.5,-3.3){$1$}
       \put(49.2,-3.3){$2$}
       \put(59.2,-3.3){$3$}
       \put(69.2,-3.3){$4$}
       \put(79.2,-3.3){$5$}

  \end{picture}
   \vspace{7mm}
   \caption{Illustration for Example~\ref{example:mk3}.}
   \label{fig:newton2}
\end{center}
\end{figure}

\begin{example}\label{example:mk3}
Assume that $q=3$, $p=5$ and suppose that $\lam_l\in\Lam$ is such that
$(\mult_{\lam-\lam_{l}} f_k)_{k=-3}^{5}=(1,1,0,2,1,0,1,1,2)$.
Figure~\ref{fig:newton2} then shows how to construct the convex, piecewise
linear curve lying below the grid points $(k,\mult_{\lam-\lam_{l}} f_k)$. We
can then read off that $$(m_k^{(l)})_{k=-3}^{5} = \left(1,\frac
12,0,0,0,0,\frac 12,1,2\right).$$
\end{example}

\begin{definition}\label{def:tilmk} (The numbers $\til m_k$).
We define
\begin{equation}
\label{totalsumofmasses} \til m_k:= m_k - \left(m_{k}^{(1)}+\ldots+
m_{k}^{(L)}\right),
\end{equation}
for $k=-q+1,\ldots,p-1$, where $m_k$ and $m_k^{(l)}$ are as in
Definitions~\ref{def:mk} and \ref{def:mkl}.
\end{definition}

The number $\til m_k$ in \eqref{totalsumofmasses} will be the total mass of the
measure $\mu_k$ in \eqref{def:measuresk}. Note that we defined $\til m_k$ for
$k=-q+1,\ldots,p-1$. We could also define $\til m_k$ for $k=-q$ or $k=p$, by
using the same definition \eqref{totalsumofmasses}. But in that case it is easy
to see that $\til m_{-q}=\til m_p = 0$.

\begin{lemma}\label{lemma:mkpos}
The numbers $\til m_k$ in \eqref{totalsumofmasses} satisfy
\begin{equation}
\label{totalsumofmasses:pos} \til m_k\geq 0,
\end{equation}
for any $k=-q+1,\ldots,p-1$. These inequalities are strict if
\eqref{f:expansionlargelambda} holds.
\end{lemma}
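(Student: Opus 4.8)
The plan is to view $k\mapsto\til m_k$ as a function on the whole discrete interval $\{-q,\ldots,p\}$ (using the extension to the endpoints mentioned right after Definition~\ref{def:tilmk}) and to exploit its concavity. Indeed $m$ is concave and each $m^{(l)}$ is convex by construction, so $\til m=m-\sum_{l=1}^L m^{(l)}$ is a sum of concave functions, hence concave. A concave function on a discrete interval lies on or above every chord of its graph, so once the signs of $\til m$ at the two endpoints $-q,p$ and at the interior point $0$ are understood, both assertions of the lemma follow by linear interpolation.

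First I would pin down the endpoints. By definition $m_{-q}\ge\deg f_{-q}$, while $m^{(l)}_{-q}\le\mult_{\lambda-\lambda_l}f_{-q}$ for each $l$; since the $\lambda_l$ are distinct and $f_{-q}\not\equiv0$, the orders of vanishing of $f_{-q}$ at the $\lambda_l$ sum to at most $\deg f_{-q}$, so $\sum_{l}m^{(l)}_{-q}\le\deg f_{-q}\le m_{-q}$, that is, $\til m_{-q}\ge0$; symmetrically $\til m_p\ge0$ (in fact equality holds, as already noted after Definition~\ref{def:tilmk}). Being concave and $\ge0$ at both endpoints, $\til m$ lies on or above the chord joining $(-q,\til m_{-q})$ and $(p,\til m_p)$, which is a nonnegative affine function; hence $\til m_k\ge0$ for $k=-q+1,\ldots,p-1$, which is \eqref{totalsumofmasses:pos}. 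I would stress that routing the argument through the global concavity of $\til m$, rather than through a pointwise comparison $\sum_l m^{(l)}_k\le\deg f_k$, is exactly what keeps it valid when some intermediate coefficient $f_k$ vanishes identically (then $\deg f_k=-\infty$ and $\mult_{\lambda-\lambda_l}f_k=\infty$, and the pointwise statement degenerates); the pointwise bound is invoked only at $k=-q,p$ and at $k=0$, where the relevant polynomial is certainly not identically zero.

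For the strict inequalities under \eqref{f:expansionlargelambda}, it suffices to exhibit one interior index where $\til m>0$, and $k=0$ is the natural candidate. Put $M:=\sum_{l=1}^L m^{(l)}$, which is convex, with $M_{-q}\le\deg f_{-q}\le r-1$ and $M_p\le\deg f_p\le r-1$ by the previous paragraph and \eqref{f:expansionlargelambda} (in particular $r\ge1$, since $\deg f_{-q}\ge0$). Convexity puts $M_0$ below the chord through $(-q,M_{-q})$ and $(p,M_p)$, so $M_0\le\tfrac{p}{p+q}M_{-q}+\tfrac{q}{p+q}M_p\le r-1$, whereas $m_0\ge\deg f_0=r$; hence $\til m_0=m_0-M_0\ge1>0$. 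Finally, for $-q<k<0$ concavity makes $\til m_k$ dominate the chord through $(-q,\til m_{-q})$ and $(0,\til m_0)$, a positive convex combination of $\til m_{-q}\ge0$ and $\til m_0>0$, hence $>0$; symmetrically $\til m_k>0$ for $0<k<p$ via the chord through $(0,\til m_0)$ and $(p,\til m_p)$. Together with $\til m_0>0$ this gives $\til m_k>0$ for every $k=-q+1,\ldots,p-1$.

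The only place a more naive approach stumbles, and hence the one point that needs genuine care, is the bookkeeping around identically vanishing intermediate coefficients and the enumeration of the finite set $\Lambda$; this is precisely what forces one to argue via concavity and the anchor indices $-q,0,p$ rather than coefficient by coefficient. Everything else is elementary discrete convexity, so I do not anticipate a real obstacle.
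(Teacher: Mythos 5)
Your proof is correct and in essence follows the same route as the paper. Both arguments reduce to the chord inequalities at the two endpoints: the paper invokes them directly from Definitions~\ref{def:mk} and \ref{def:mkl} (its inequalities~\eqref{proof:convex1}--\eqref{proof:convex2} with $i=-q$, $j=p$), together with the identity $\sum_l\mult_{\lambda-\lambda_l}f_{\pm}=\deg f_{\pm}$ coming from the definition of $\Lambda$; you package the same content as "$\til m$ is concave and $\til m_{-q}=\til m_p=0$." For strictness the paper argues by contradiction (equality would force all grid points $(k,\deg f_k)$ below the line through $(-q,\deg f_{-q})$ and $(p,\deg f_p)$, impossible at $k=0$ since $\deg f_0=r$), whereas you establish $\til m_0\ge1$ directly and propagate by concavity. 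Both hinge on the same observation about $k=0$, so the difference is cosmetic rather than mathematical; your formulation is arguably a bit cleaner, while the paper's contradiction route sets up exactly the equality analysis it reuses in the proof of Lemma~\ref{lemma:equality}.
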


\begin{lemma}\label{lemma:equality} (Criteria for $\til m_k=0$).
In Lemma~\ref{lemma:mkpos}, the following statements give equivalent conditions
for having equality in \eqref{totalsumofmasses:pos}:
\begin{enumerate} \item[(i)] $\til m_k=0$
for some $k\in\{-q+1,\ldots,p-1\}$, \item[(ii)] $\til m_k=0$ for all
$k=-q,\ldots,p$, \item[(iii)] $f(z,\lam) = g(\lam)\til f(h(\lam)z),$ where $g$
is a polynomial and $h$ a rational function of $\lam$, and where $\til
f(z)=c_{-q}z^{-q}+\cdots+c_pz^p$ is a Laurent polynomial with coefficients
$c_k$ \emph{not} depending on~$\lam$.
\end{enumerate}
\end{lemma}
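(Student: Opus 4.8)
The plan is to prove the chain of implications $(iii)\Rightarrow(ii)\Rightarrow(i)\Rightarrow(iii)$, the middle one being trivial. First, for $(iii)\Rightarrow(ii)$: if $f(z,\lam)=g(\lam)\,\til f(h(\lam)z)$ with $\til f(z)=\sum_{k=-q}^p c_kz^k$ and $c_k$ constants, then $f_k(\lam)=g(\lam)\,h(\lam)^k\,c_k$. Writing $a=\deg g$ and letting $b$ be the order of the pole/zero of $h$ at $\lam=\infty$ (so $\deg h^k = bk$ up to sign conventions; one works with $\deg$ as a function taking value $-\infty$ on the zero polynomial, and one must be slightly careful that $h$ is rational, not polynomial — but $h^k c_k$ has a well-defined ``degree at infinity'' $bk+O(1)$), one gets $\deg f_k = a + bk + d_k$ where $d_k:=\deg c_k$ is $0$ or $-\infty$. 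Since $k\mapsto a+bk$ is affine, the smallest concave majorant of $\deg f_k$ is just $m_k = a+bk$ on the support, extended affinely; more precisely the Newton-polygon construction returns exactly the affine interpolant through the two extreme points $k=-q,k=p$ (using $f_{-q},f_p\not\equiv0$), which is $a+bk$. Similarly, for each $\lam_l$, $\mult_{\lam-\lam_l}f_k = \mult_{\lam-\lam_l}g + k\cdot\mathrm{(ord\ of\ }h\mathrm{\ at\ }\lam_l)+\mult_{\lam-\lam_l}c_k$, and since $c_k$ is a nonzero constant when $f_k\not\equiv0$ the last term vanishes, so $\mult_{\lam-\lam_l}f_k$ is again affine in $k$ on the support; its largest convex minorant $m_k^{(l)}$ is the affine interpolant, and one checks $\sum_l m_k^{(l)} = a+bk$ as well (this is essentially the statement that the product of the ``missing factors'' $g(\lam)h(\lam)^k$ has total degree $a+bk$ with all roots among the $\lam_l$, away from $\infty$). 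Hence $\til m_k = m_k - \sum_l m_k^{(l)} \equiv 0$.

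The substantive direction is $(i)\Rightarrow(iii)$. Suppose $\til m_{k_0}=0$ for some $k_0$. By Definition~\ref{def:tilmk}, $m_{k_0}=\sum_l m_{k_0}^{(l)}$. The key observation is that $m_k$ is a concave function dominating $\deg f_k$, while $\sum_l m_k^{(l)}$ is a convex function dominated by $\sum_l \mult_{\lam-\lam_l}f_k \le \deg f_k$ (the last inequality because distinct roots contribute independently to the degree). So we have, for all $k$,
\begin{equation}\label{eq:sandwich}
\textstyle\sum_l m_k^{(l)} \ \le\ \deg f_k \ \le\ m_k,
\end{equation}
a convex function below a concave function, agreeing at $k_0$. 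A convex and a concave function on an interval that touch at an interior point must coincide on the whole interval (the concave one minus the convex one is concave, nonnegative, and zero at an interior point, hence identically zero). If $k_0$ is an endpoint one instead uses that both functions are pinned at both endpoints $-q,p$ (where $m_{-q}=\deg f_{-q}$ and $\sum_l m_{-q}^{(l)}=\mult$-sum$=\deg f_{-q}$ because the outermost coefficient is a single monomial in the relevant sense — this needs the normalization that $f_{-q},f_p\not\equiv0$, and one argues $\til m_{-q}=\til m_p=0$ always, as already noted in the text). Either way, \eqref{eq:sandwich} forces $\deg f_k = m_k = \sum_l m_k^{(l)}$ for all $k$, with all three being the \emph{same affine function} $a+bk$ of $k$ (concave $=$ convex forces affine). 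Moreover equality $\sum_l m_k^{(l)}=\deg f_k$ together with $\sum_l m_k^{(l)}\le\sum_l\mult_{\lam-\lam_l}f_k\le\deg f_k$ forces $\mult_{\lam-\lam_l}f_k = m_k^{(l)}$ for every $l$ and $k$, and forces $\deg f_k=\sum_l\mult_{\lam-\lam_l}f_k$, i.e. \emph{all} roots of $f_k(\lam)$ lie in $\Lam=\{\lam_l\}$. Therefore
\begin{equation}\label{eq:factored}
f_k(\lam) = c_k \prod_{l=1}^L (\lam-\lam_l)^{m_k^{(l)}},\qquad c_k\in\cee,
\end{equation}
whenever $f_k\not\equiv0$, with $c_k\ne0$; and $m_k^{(l)}=\gamma_l k+\delta_l$ is affine in $k$ for each $l$ (being convex and concave — it equals, restricted to the support, the affine interpolant, and off the support one can just take the same affine formula since the constraint is vacuous there, but here the support is an interval so no issue), and $\sum_l(\gamma_l k+\delta_l)=a+bk$.

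It then remains to assemble \eqref{eq:factored} into the form in $(iii)$. Set $h(\lam):=\prod_{l=1}^L(\lam-\lam_l)^{\gamma_l}$, a rational function of $\lam$ (the $\gamma_l$ are rational, but one can clear denominators: replace $z$ by $z^N$ / rescale, or more cleanly observe that $m_k^{(l)}$ are integers for integer $k$ so the differences $\gamma_l$ may be taken in $\tfrac1{(\text{something})}\zet$ — actually $m_{k+1}^{(l)}-m_k^{(l)}$ is a constant integer, so $\gamma_l\in\zet$ and there is no fractional-exponent problem), and set $g(\lam):=\prod_l(\lam-\lam_l)^{\delta_l}$, which may have negative exponents but we can factor out a fixed power of $h$ into $g$, or absorb: precisely, writing $m_k^{(l)}=\gamma_l(k)+\delta_l$ we get $f_k(\lam)=c_k\,g(\lam)\,h(\lam)^k$, hence $f(z,\lam)=\sum_k f_k(\lam)z^k = g(\lam)\sum_k c_k (h(\lam)z)^k = g(\lam)\,\til f(h(\lam)z)$ with $\til f(z)=\sum_k c_k z^k$, exactly as required; if the resulting $g$ has a pole one multiplies $g$ by a suitable power of $h$ and compensates in $\til f$ by an overall shift of the exponent range, which is harmless since $p,q$ have no absolute meaning (as remarked after \eqref{f:expansion:arb}).

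The main obstacle I anticipate is purely bookkeeping: handling the ``zero polynomial'' values of $\deg f_k$ (where the concave/convex majorant/minorant constructions in Definitions~\ref{def:mk}, \ref{def:mkl} treat $-\infty$ implicitly) and ensuring the endpoint cases $k_0\in\{-q,p\}$, or $k_0$ outside the support of $f_k$, are covered — together with checking that the exponents $\gamma_l,\delta_l$ come out integral so that $h,g$ are genuine rational functions and $\til f$ a genuine Laurent polynomial. None of this is deep, but it is where a careless argument would have a gap; the conceptual core is just the elementary fact that a convex minorant squeezed below a concave majorant, touching once, forces everything to be the same affine function.
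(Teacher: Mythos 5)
Your proof is correct and follows essentially the same route as the paper's: both reduce (i)$\Rightarrow$(iii) to showing that $\deg f_k$ and each $\mult_{\lam-\lam_l}f_k$ are affine in $k$, so that each coefficient $f_k(\lam)$ is a constant times $\prod_l(\lam-\lam_l)^{m_k^{(l)}}$ with affine exponents, which repackages into the form $g(\lam)\til f(h(\lam)z)$. The only real difference is the mechanism by which affineness is forced. The paper traces equality back through the explicit endpoint inequalities \eqref{proof:convex1}--\eqref{proof:convex4bis} (i.e., showing that the Newton‐polygon values must lie on the chord joining $(-q,\deg f_{-q})$ and $(p,\deg f_p)$) and in particular derives the closed form $h(\lam)=\prod_l(\lam-\lam_l)^{(\mult_{\lam-\lam_l}(f_p/f_{-q}))/(p+q)}$; your sandwich $\sum_l m_k^{(l)}\le\deg f_k\le m_k$, where a convex minorant meets a concave majorant at an interior point, reaches the same conclusion slightly more conceptually and avoids singling out the endpoints, though it arrives at the identical affine function. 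You also spell out (iii)$\Rightarrow$(ii) directly, which the paper only asserts can be done "by reversing the arguments." The integrality worry you flag about $\gamma_l$ resolves as you say ($m_k^{(l)}=\mult_{\lam-\lam_l}f_k$ is integer‐valued and affine, so the slope is an integer), and $\delta_l=m_0^{(l)}\ge0$, so in fact $g$ is already a polynomial and no reshuffling with powers of $h$ is needed.
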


Lemmas~\ref{lemma:mkpos} and ~\ref{lemma:equality} are proved in
Section~\ref{subsection:proof:lemmaremark}. The nonnegativity of $\til m_k$ can
also be deduced from the fact that it is the total mass of the positive measure
$\mu_k$. To avoid trivial statements, we will often tacitly assume that none of
the equivalent conditions in Lemma~\ref{lemma:equality} is satisfied.

Here is an addendum to Lemma~\ref{lemma:structureGammak}:

\begin{proposition}\label{prop:componentsGammak} (Connected components of
$\Gamma_k$). Suppose that hypothesis H2$k$ holds true. Then the number of
compact, connected components of $\Gamma_k$ is $\leq m_k$ (and hence $\leq r$).
Moreover, for each compact, connected component $C$ of $\Gamma_k$, denote by
$\mu_k(C)$ the total mass of the restriction of the measure $\mu_k$ in
\eqref{def:measuresk} to $C$, with $\mu_k(C)=0$ if $C$ is an isolated point of
$\Gamma_k$. Then we have that
\begin{equation}\label{componentsGammak:H1} \mu_k(C)\in\enn,\qquad\textrm{if hypothesis H1 holds,}
\end{equation}
and in general,
\begin{equation}\label{componentsGammak}
\mu_k(C)+\sum_{\lam_l\in \Lam\cap C} m_k^{(l)}\in\enn,\end{equation} where the
sum runs over all $l=1,\ldots,L$ with $\lam_l\in\Lam\cap C$.
\end{proposition}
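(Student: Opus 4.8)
The plan is to analyze the structure of $\Gamma_k$ via the behavior of the product $\prod_{j=1}^{q+k} z_j(\lam)$, which is, up to a factor $f_p(\lam)^{\pm 1}$ times a monomial, a symmetric function of the \lq inner\rq\ roots. Concretely, on $\cee\setminus\Gamma_k$ (away from $\Lam$) the set $\{z_1(\lam),\ldots,z_{q+k}(\lam)\}$ is a well-defined analytic set of roots separated in modulus from the remaining ones, so the function
$$ R_k(\lam) := \prod_{j=1}^{q+k} z_j(\lam) $$
is locally analytic and single-valued on each component of $\cee\setminus\Gamma_k$; the measure $\mu_k$ in \eqref{def:measuresk} is exactly $\frac{1}{2\pi i}\,d\log R_k$ across $\Gamma_k$. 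The first step is to identify $R_k$ with an explicit rational function: by Vieta applied to the polynomial $z^qf(z,\lam)$ of \eqref{Laurentpol:shifted}, the product of \emph{all} $p+q$ roots is $\pm f_{-q}(\lam)/f_p(\lam)$, and each partial product $R_k$ is an algebraic function of $\lam$ whose branch points lie on $\Gamma_k\cup\Lam$. One then shows $\log|R_k|$ is a harmonic function on $\cee\setminus(\Gamma_k\cup\Lam)$ whose Laplacian (distributionally) is $-2\pi$ times $\mu_k$ plus contributions at the points of $\Lam$ with weights $m_k^{(l)}$; this is essentially the content of the representation \eqref{def:measuresk} together with Definition~\ref{def:mkl} describing how many roots $z_j(\lam)$ tend to $0$ or $\infty$ as $\lam\to\lam_l$.

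The second step is the counting of compact components. Under H2$k$, Lemma~\ref{lemma:structureGammak} tells us $\Gamma_k$ is a finite union of analytic arcs meeting at finitely many points, so it has finitely many connected components, each either a point or a compact \lq graph\rq\ or an unbounded piece. For a \emph{compact} component $C$, I would take a small contour $\gamma$ encircling $C$ (and no other component of $\Gamma_k$ and no point of $\Lam$ outside $C$), lying in the unbounded face where $R_k$ is single-valued; then
$$ \mu_k(C) + \sum_{\lam_l\in\Lam\cap C} m_k^{(l)} = \frac{1}{2\pi i}\oint_{\gamma} \frac{R_k'(\lam)}{R_k(\lam)}\,d\lam $$
is a winding number of $R_k\circ\gamma$ about the origin, hence a nonnegative integer — this is \eqref{componentsGammak}, and \eqref{componentsGammak:H1} follows since under H1 there are no $\lam_l$ at all. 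For the bound $\mu_k(C)\le m_k$: summing the left-hand sides of \eqref{componentsGammak} over all compact components gives at most the total mass picked up by $\log|R_k|$ over all of $\cee$, which is $\til m_k+\sum_l m_k^{(l)} = m_k$ by Definition~\ref{def:tilmk}; since each summand is a \emph{positive} integer (strictly positive because $\mu_k(C)>0$ on a non-degenerate component, or $m_k^{(l)}\ge 1$ if $\lam_l$ is an interior branch locus — and a component that is a single isolated point contributes a strictly positive $m_k^{(l)}$ by Lemma~\ref{lemma:structureGammak} which places isolated points in $\Lam$), the number of compact components is at most $m_k\le r$, the last inequality being immediate from Definition~\ref{def:mk} and \eqref{f:expansionlargelambda}.

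The main obstacle I anticipate is making the \lq single-valuedness of $R_k$\rq\ argument fully rigorous: the labeling of the roots by modulus in \eqref{ordering:roots} is only a choice, and while the \emph{set} $\{z_1,\ldots,z_{q+k}\}$ is unambiguous off $\Gamma_k$, one must check that $\Gamma_k$ genuinely separates $\cee$ into regions on which this set extends analytically, that the product $R_k$ has no unexpected poles or zeros in the interior of a face (these are controlled exactly by $\Lam$, via \eqref{def:laml}), and that the contour integral localizes correctly when a compact component $C$ contains points of $\Lam$ — here one needs the convex minorant description of Definition~\ref{def:mkl} to account precisely for roots escaping to $0$ or $\infty$ near such $\lam_l$, so that no mass is lost or double-counted. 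A secondary technical point is ruling out that an unbounded component of $\Gamma_k$ or the point $\infty$ contributes: one handles $\lam\to\infty$ using \eqref{f:expansionlargelambda} (the degrees of the $f_k$ force a definite asymptotic ordering of the $|z_j(\lam)|$, so $\infty\notin\Gamma_k$ generically) and absorbs any remaining mass at $\infty$ into the global total $m_k$, consistently with the remark that $\til m_{-q}=\til m_p=0$.
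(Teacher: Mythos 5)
Your route coincides with the paper's: write $\mu_k(C)$ as a contour integral of $w_k'/w_k$ (your $R_k$) around $C$, identify it with a winding number plus residue contributions at $\lam_l\in\Lam\cap C$, conclude integrality, and then sum \eqref{componentsGammak} over compact components using $\til m_k+\sum_l m_k^{(l)}=m_k$ from Definition~\ref{def:tilmk} to get the bound. However, the displayed identity
\begin{equation*}
\mu_k(C)+\sum_{\lam_l\in\Lam\cap C}m_k^{(l)}=\frac{1}{2\pi i}\oint_{\gamma}\frac{R_k'(\lam)}{R_k(\lam)}\,d\lam
\end{equation*}
is not correct, and the heuristic you offer to justify it (contributions of $\Lam$ ``with weights $m_k^{(l)}$'') misidentifies the residue. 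By Proposition~\ref{prop:threeproperties}(c) the residue of $w_k'/w_k$ at $\lam_l$ is $\mult_{\lam-\lam_l}(f_{-q})-m_k^{(l)}$, not $\pm m_k^{(l)}$, so (up to orientation) the contour integral equals $\mu_k(C)+\sum_{\lam_l\in\Lam\cap C}\bigl(\mult_{\lam-\lam_l}(f_{-q})-m_k^{(l)}\bigr)$. Your conclusion $\mu_k(C)+\sum m_k^{(l)}\in\zet$ survives only because the extra term $\mult_{\lam-\lam_l}(f_{-q})$ is itself a nonnegative integer and drops out modulo $\zet$; and nonnegativity of the quantity in \eqref{componentsGammak} comes from $\mu_k\ge 0$ and $m_k^{(l)}\ge 0$, not from reading it as a winding number. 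So the gap is in the justification rather than the route: you need the precise local expansion of $w_k'/w_k$ near $\lam_l$ (Proposition~\ref{prop:threeproperties}(c), which rests on the Puiseux exponents \eqref{def:sj:l} and the Newton-polygon identity \eqref{Newton:polygon:l}) to get the correct residue, rather than the informal count of ``roots escaping to $0$ or $\infty$.'' With that correction, the rest of your argument, including the use of Lemma~\ref{lemma:structureGammak} for the structure of $\Gamma_k$ and the summation over components to obtain $K\le m_k\le r$, matches the paper.
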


Prop.~\ref{prop:componentsGammak} will be proved in
Section~\ref{subsection:proof:theorem:main1}. The proposition was observed
before by Widom \cite[Page 315]{Widom1} in the special case where $k=0$ and
$m_0^{(l)}=0$ for all $l$; note that \eqref{componentsGammak} then reduces to
\eqref{componentsGammak:H1}. In the scalar case $r=1$ it generalizes Ullman's
result~\cite{Ullman} that $\Gamma_0$ is connected.

\subsection{The equilibrium problem}
\label{subsection:equil1}

Now we associate an equilibrium problem to \eqref{f:expansion:arb}. First we do
this under the hypothesis H2$k$. We will closely follow \cite{DD,DK}. For any
measure $\mu$ on $\cee$ define its logarithmic energy as
\begin{equation*}\label{deflogarithmicenergybis}
I(\mu) = \int_{}\int_{}\ \log\frac{1}{|x-y|} \ d\mu(x)\ d\mu(y).
\end{equation*}
Similarly, for any measures $\mu,\nu$ on $\cee$ define their mutual energy as
\begin{equation*}\label{defmutualenergybis}
I(\mu,\nu) = \int_{}\int_{}\ \log\frac{1}{|x-y|} \ d\mu(x)\ d\nu(y).
\end{equation*}

\begin{definition}\label{def:admissible}
We call a vector of positive measures $\vec{\nu} =
(\nu_{-q+1},\ldots,\nu_{p-1})$ \emph{admissible} if $\nu_k$ has finite
logarithmic energy, $\nu_k$ is supported on $\wtil\Gamma_k$ in
\eqref{def:Gammak:wtil}, and it has total mass $\nu_k(\wtil\Gamma_k) = \til
m_k$ for every $k=-q+1,\ldots,p-1$, recall
\eqref{totalsumofmasses}--\eqref{totalsumofmasses:pos}.
\end{definition}

\begin{definition}\label{def:equil}
The \emph{energy functional} $J$ is defined by
\begin{multline}\label{energyfunctional} J(\vec{\nu}) = \sum_{k=-q+1}^{p-1}
I(\nu_k) - \sum_{k=-q+1}^{p-2}
I(\nu_k,\nu_{k+1})\\+\sum_{l=1}^{L}\sum_{k=-q+1}^{p-1}
(-m_{k-1}^{(l)}+2m_{k}^{(l)}-m_{k+1}^{(l)})\int\log\frac{1}{|\lam-\lam_{l}|}\
d\nu_k(\lam).
\end{multline}
The \emph{(vector) equilibrium problem} is to minimize the energy functional
\eqref{energyfunctional} over all admissible vectors of positive measures
$\vec{\nu}$.
\end{definition}

Note that the numbers $-m_{k-1}^{(l)}+2m_{k}^{(l)}-m_{k+1}^{(l)}$ in
\eqref{energyfunctional} are all nonpositive because of the convexity of
$k\mapsto m_k^{(l)}$.

The equilibrium problem can be understood intuitively as follows, compare with
\cite{DD,DK}. On each of the curves $\wtil\Gamma_k$ (recall the assumption
H2$k$) we put charged particles with total charge $\til m_k$. The particles on
each curve repel each other. The particles on two consecutive curves attract
each other, with a strength that is half as strong as the repulsion on each
individual curve. Particles on different curves that are non-consecutive do not
interact directly. Moreover, if $2m_k^{(l)}\neq m_{k-1}^{(l)}+m_{k+1}^{(l)}$
then we have an attracting external field acting on the particles on the curve
$\wtil\Gamma_{k}$. The external field is induced by an attracting point charge
(also called \emph{sink}) at $\lam=\lam_l$. We refer to~\cite{Rans,SaffTotik}
for
background on equilibrium problems with external fields, and 
to \cite{NS} for vector equilibrium problems.

Note that if hypothesis H1 holds true then \eqref{energyfunctional} reduces to
\begin{equation}\label{energyfunctional:H1} J(\vec{\nu}) = \sum_{k=-q+1}^{p-1}
I(\nu_k) - \sum_{k=-q+1}^{p-2} I(\nu_k,\nu_{k+1}).
\end{equation}
This is the energy functional in \cite{DK}; it also appears in the theory of
Nikishin systems \cite{NS}.

The following theorem generalizes a result in \cite{DD} and \cite{DK}.

\begin{theorem}\label{theorem:main1} (Equilibrium problem associated to an algebraic curve). Consider an
algebraic curve as in \eqref{f:expansion:arb} and define the sets $\Gamma_k$,
$\wtil\Gamma_k$, $k=-q+1,\ldots,p-1$ as in \eqref{def:Gammak} and
\eqref{def:Gammak:wtil}. Assume that hypothesis H2$k$ holds true. Then
\begin{itemize}
\item[(a)] The vector of measures $\vec{\mu} = (\mu_{k})_{k=-q+1}^{p-1}$ defined in \eqref{def:measuresk} is admissible.
\item[(b)] There
exist constants $l_k\in\mathbb{R}$ such that \begin{multline}
2\int\log\frac{1}{|\lam-x|}\ d\mu_k(x) - \int\log\frac{1}{|\lam-x|}\
d\mu_{k+1}(x) - \int\log\frac{1}{|\lam-x|}\ d\mu_{k-1}(x)\\
+\sum_{l=1}^L(-m_{k-1}^{(l)}+2m_{k}^{(l)}-m_{k+1}^{(l)})\log\frac{1}{|\lam-\lam_l|}
 = l_k,
\end{multline}
for $\lam\in\wtil\Gamma_k$, $k\in\{-q+1,\ldots,p-1\}$. Here we let $\mu_{-q}$
and $\mu_{p}$ be the zero measures.
\item[(c)] $\vec{\mu} =(\mu_{k})_{k=-q+1}^{p-1}$ is the unique solution to the equilibrium
problem in Def.~\ref{def:equil}.
\end{itemize}
\end{theorem}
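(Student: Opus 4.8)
The plan is to establish the three parts in order, using the structure of the measures $\mu_k$ together with the Newton-polygon combinatorics built into the $m_k$ and $m_k^{(l)}$. For part (a), I would first check that each $\mu_k$ is a positive measure supported on $\wtil\Gamma_k$. Positivity should follow from the identification of $d\mu_k$ in \eqref{def:measuresk} as the jump across $\Gamma_k$ of the harmonic function $\lam\mapsto\sum_{j=1}^{q+k}\log|z_j(\lam)|$; indeed the sum $\sum_{j=1}^{q+k}\log|z_j(\lam)|$ is continuous and subharmonic off $\Lam$ (a maximum of sums of logs of moduli of algebraic functions), so its distributional Laplacian is a positive measure, which by H2$k$ is concentrated on $\wtil\Gamma_k$ and coincides with $\mu_k$. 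The main work in (a) is computing the total mass $\mu_k(\wtil\Gamma_k)=\til m_k$. I would do this by a contour-integration / argument-principle argument: integrate $d\mu_k$ over a large circle (or use the behavior as $\lam\to\infty$ and $\lam\to\lam_l$) and read off the asymptotics of $\sum_{j=1}^{q+k}\log|z_j(\lam)|$ from the Puiseux expansions of the roots $z_j(\lam)$ of $z^qf(z,\lam)$. The exponents governing these asymptotics at $\infty$ are exactly the slopes of the Newton polygon of $f$ in $\lam$, which is what Definition~\ref{def:mk} encodes; similarly the local behavior at each $\lam_l\in\Lam$ is governed by Definition~\ref{def:mkl}. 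Combining, the total mass is $m_k$ minus the $\sum_l m_k^{(l)}$ correction, i.e. $\til m_k$; finiteness of logarithmic energy follows since $\Gamma_k$ is a finite union of analytic arcs (Lemma~\ref{lemma:structureGammak}) and $d\mu_k$ has at worst algebraic singularities at the finitely many branch/endpoints.

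For part (b), the variational equalities, the key identity is again the jump formula. Set $U_k(\lam):=\int\log\frac{1}{|\lam-x|}\,d\mu_k(x)$ for the logarithmic potential, and consider the combination appearing in (b),
\[
V_k(\lam):=2U_k(\lam)-U_{k+1}(\lam)-U_{k-1}(\lam)+\sum_{l=1}^L\bigl(-m_{k-1}^{(l)}+2m_k^{(l)}-m_{k+1}^{(l)}\bigr)\log\frac{1}{|\lam-\lam_l|}.
\]
I would show $V_k$ is harmonic on each connected component of $\overline{\cee}\setminus\wtil\Gamma_k$ and has no jump in its normal derivative across $\wtil\Gamma_k$, hence is locally constant; then I would check it is globally constant by ruling out distinct constants on distinct components (using connectivity properties from Prop.~\ref{prop:componentsGammak} and the behavior at $\infty$). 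The cleanest route is to express $U_k$ directly via the roots: by the definition \eqref{def:measuresk}, $-2\pi U_k(\lam)$ equals, up to an entire/harmonic correction, the function $\sum_{j=1}^{q+k}\log|z_j(\lam)|$ plus explicit logarithmic terms coming from $\Lam$ and from $\lam\to\infty$; then the combination $2U_k-U_{k+1}-U_{k-1}$ telescopes to something like $-\frac{1}{2\pi}\log\bigl|z_{q+k}(\lam)/z_{q+k+1}(\lam)\bigr|$-type expressions (the "band" structure of a Nikishin system), which is manifestly continuous across $\wtil\Gamma_k$ and harmonic off it, and the $m_k^{(l)}$-weighted logarithmic terms are exactly what is needed to cancel the residual singularities at the $\lam_l$. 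This is essentially the block-matrix analogue of the computation in \cite{DK}, and I expect the bookkeeping with the $\Lam$-corrections to be the delicate part.

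For part (c), uniqueness and minimality, I would argue by the standard convexity argument for vector equilibrium problems. The interaction matrix of the quadratic form $\sum_k I(\nu_k)-\sum_k I(\nu_k,\nu_{k+1})$ is the tridiagonal matrix with $2$'s on the diagonal and $-1$'s off-diagonal (after rescaling), which is positive definite; the external-field term is linear, so $J$ is strictly convex on the (convex, affine) set of admissible vectors, once one knows $J$ is finite somewhere and bounded below. Given a competitor $\vec\nu$, write $\sigma_k=\nu_k-\mu_k$, a signed measure of total mass zero on $\wtil\Gamma_k$; expanding $J(\vec\nu)-J(\vec\mu)$ gives a linear term that vanishes by the Euler–Lagrange equalities of part (b) (here the constants $l_k$ drop out precisely because $\sigma_k$ has mass zero), plus a quadratic term $\sum_k I(\sigma_k)-\sum_k I(\sigma_k,\sigma_{k+1})$ which is $\geq0$ and equals zero only when all $\sigma_k=0$. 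The positivity of this quadratic form on zero-mean measures is the one genuinely analytic input; it follows from the positive-definiteness of the tridiagonal matrix combined with the fact that $I(\sigma)\geq0$ for zero-mean $\sigma$ of finite energy (via the Fourier/Parseval representation of logarithmic energy), exactly as in the Nikishin-system literature \cite{NS}. The main obstacle, I expect, is not the convexity but rather the careful verification in parts (a) and (b) that the $\Lam$-corrections $m_k^{(l)}$ are precisely the right ones — i.e. that the Newton-polygon data at the finite ramification points $\lam_l$ match the local mass defects of $\mu_k$ and the local singularities of the potentials $U_k$.
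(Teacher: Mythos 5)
Your proposal follows essentially the same route as the paper: part (a) via the residue/argument-principle computation of the total mass of $\mu_k$ using the Puiseux exponents at $\infty$ and at the points of $\Lam$ (this is exactly Lemma~\ref{lemma:sj}, \eqref{Newton:polygon:l}, Prop.~\ref{prop:threeproperties} and Prop.~\ref{prop:mupositivemass}); part (b) via the explicit representation of the potentials in terms of $\log|w_k|$ with $w_k=\prod_{j=1}^{q+k}z_j$ (Prop.~\ref{prop:cauchytransformcontour}), so that the combination telescopes to $\log|z_{q+k}/z_{q+k+1}|$ plus the $\Lam$-corrections and vanishes on $\wtil\Gamma_k$; and part (c) via the standard convexity comparison. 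Two soft spots are worth flagging. First, your initial route for (b) --- ``harmonic off $\wtil\Gamma_k$ and no jump in the normal derivative, hence locally constant'' --- is a non sequitur (such a function merely extends harmonically); what actually closes the argument is your second route, namely that the telescoped expression equals $\log|z_{q+k}/z_{q+k+1}|$ up to constants and this is identically zero \emph{on} $\wtil\Gamma_k$ by the very definition of $\Gamma_k$. Second, in part (c) you defer the statement that $J$ is ``finite somewhere and bounded below,'' but this is a genuine step, not a formality: the curves $\wtil\Gamma_k$ may be unbounded and the external field consists of attracting sinks, so the individual terms of $J$ need not be bounded below and the naive expansion $J(\vec\nu)=J(\vec\mu)+L(\vec\sigma)+Q(\vec\sigma)$ may involve ill-defined cross terms for a general admissible competitor. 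The paper handles this by the regrouping \eqref{alternativereprE}, identifying the coefficients $-m_{k-1}+2m_k-m_{k+1}=s_{q+k+1}-s_{q+k}\geq 0$ and $m_{k-1}^{(l)}-2m_k^{(l)}+m_{k+1}^{(l)}\geq 0$ via the Newton polygon, observing that $\wtil\Gamma_k$ is compact exactly when the former is positive and that $\lam_l\notin\wtil\Gamma_k$ exactly when the latter is positive (so the sink term is controlled by balayage). You would need to supply this, or an equivalent justification, before the convexity argument is legitimate.
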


Theorem~\ref{theorem:main1} will be proved in
Section~\ref{subsection:proof:theorem:main1}. Note that the equalities in
Part~(b) are nothing but the Euler-Lagrange variational conditions of the
equilibrium problem, see also \cite{DD,DK}.

\subsection{Roots with identically equal modulus}
\label{subsection:Gammak2D}

Now we extend Theorem~\ref{theorem:main1} to the case where hypothesis~H2$k$
fails, i.e., the case where one or more sets $\Gamma_k$ have non-zero
2-dimensional Lebesgue measure in $\cee$. By Lemma~\ref{lemma:structureGammak}
this implies that $\Gamma_k$ contains an open disk $U$. Inside this disk, two
or more roots $z(\lam)$ of $f(z,\lam)$ have identically equal modulus as
functions of $\lam$. If the disk $U$ is disjoint from $\Lam$, then we can label
these roots so that they are analytic functions in $U$. The maximum modulus
principle then implies that they are identically equal as functions of $\lam$,
up to a constant factor of modulus~$1$.

In this case we adapt the definition of the sets $\Gamma_k$ as follows:
\begin{multline}\label{def:Gammak:mod} \Gamma_k = \{\lam\in\cee\setminus\Lam\mid |z_{q+k}(\lam)| = |z_{q+k+1}(\lam)|
\textrm{ and, possibly after relabeling the roots, the} \\
\textrm{function $z_{q+k}/z_{q+k+1}$ takes infinitely many values on each open
neighborhood $U$ of $\lam$}\}\\ \cup\ \{\lam_l\in\Lam\mid m_k^{(l)}>0\}.
\end{multline}
This new definition guarantees that $\Gamma_k$ is a curve:

\begin{lemma}\label{lemma:strucwk} Fix $k\in\{-q+1,\ldots,p-1\}$.
\begin{itemize}\item[(a)] The set $\Gamma_k$ in
\eqref{def:Gammak:mod} is a finite union of analytic arcs and points, with all
of its isolated points belonging to $\Lam$, and it satisfies
Lemma~\ref{lemma:structureGammak}.
\item[(b)] For any simply connected domain
$U\subset\cee\setminus(\Gamma_k\cup\Lam)$, we can choose an ordering of the
roots $z_j(\lam)$ as in \eqref{ordering:roots} such that $\prod_{j=1}^{q+k}
z_j(\lam)$ is analytic for $\lam\in U$. Moreover, we can uniquely define the
logarithmic derivative $\left(\prod_{j=1}^{q+k}
z_j(\lam)\right)'/\prod_{j=1}^{q+k} z_j(\lam)$ as a meromorphic function in
$\cee\setminus\Gamma_k$ with poles at the points in $\Lam$.

\end{itemize}
\end{lemma}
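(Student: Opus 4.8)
The plan is to adapt the local structure analysis behind Lemma~\ref{lemma:structureGammak} and to combine it with a residue computation for the logarithmic derivative. Set $g(z,\lam):=z^qf(z,\lam)$; for $\lam\notin\Lam$ this is a polynomial in $z$ of degree $p+q$ whose roots are exactly the $z_j(\lam)$ of \eqref{ordering:roots}. Let $D\subset\overline{\cee}$ be the finite set made up of $\Lam$, the point $\infty$, and the zeros of the discriminant of $g(\cdot,\lam)$; we may assume none of the degeneracies of Lemma~\ref{lemma:equality} occurs, so this discriminant is not identically zero. Away from $D$ the roots $z_j(\lam)$ are locally analytic and mutually distinct; near a point of $D\setminus\Lam$ they are convergent Puiseux series in a local uniformizer; and near $\lam_l\in\Lam$ some roots run to $0$ or $\infty$ at rates controlled by the Newton polygon, i.e.\ by the numbers $m_k^{(l)}$ of Definition~\ref{def:mkl}.

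For part~(a), the decisive observation is a dichotomy for two locally analytic roots $w_i,w_j$ on a disk disjoint from $D$: either $w_i/w_j$ is a constant of modulus~$1$, or $\log|w_i/w_j|=\mathrm{Re}\,\log(w_i/w_j)$ is a non-constant harmonic function. Indeed, if $|w_i/w_j|\equiv1$ on an open set then the open mapping theorem forces $w_i/w_j$ to be constant. In the first (\lq\lq synchronized\rq\rq) case $w_i$ and $w_j$ always have equal modulus and by construction contribute nothing to $\Gamma_k$ under the modified definition \eqref{def:Gammak:mod}; in the second case the zero set $\{|w_i|=|w_j|\}$ is locally a finite union of analytic arcs and has no isolated points. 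Partitioning, at a given $\lam_0$, the roots of a common modulus into synchronized classes and keeping track of which classes contain the positions $q+k$ and $q+k+1$, one finds that near $\lam_0\in\overline{\cee}\setminus D$ the set $\Gamma_k$ is either empty or a finite union of analytic arcs issuing from $\lam_0$. At the finitely many points of $D$ one argues separately: for $\lam_l\in\Lam$, membership $\lam_l\in\Gamma_k$ is decided by the explicit clause $m_k^{(l)}>0$ in \eqref{def:Gammak:mod}, and the local shape (an isolated point, or arcs emanating from $\lam_l$) is read off from the Puiseux/Newton-polygon picture; at a branch point in $D\setminus\Lam$ one repeats the harmonic-function argument with $w_i,w_j$ now Puiseux branches. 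Since $\overline{\cee}$ is compact and $\Gamma_k$ is closed with this local structure at every point, a finite subcover shows $\Gamma_k$ is globally a finite union of analytic arcs and points; as the only points admitting a purely isolated local structure lie in $\Lam$, all isolated points of $\Gamma_k$ belong to $\Lam$. In particular $\Gamma_k$ satisfies Lemma~\ref{lemma:structureGammak}.

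For part~(b), differentiate $g(z_j(\lam),\lam)=0$ to get $z_j'(\lam)=-\partial_\lam g(z_j,\lam)/\partial_zg(z_j,\lam)$. Since the residue of $\partial_\lam g(z,\lam)/(z\,g(z,\lam))$ is $f_{-q}'(\lam)/f_{-q}(\lam)$ at $z=0$ and $-z_j'(\lam)/z_j(\lam)$ at each simple root $z_j$, the residue theorem yields
\[
\sum_{j=1}^{q+k}\frac{z_j'(\lam)}{z_j(\lam)}=\frac{f_{-q}'(\lam)}{f_{-q}(\lam)}-\frac1{2\pi i}\oint_{\sigma}\frac{\partial_\lam g(z,\lam)}{z\,g(z,\lam)}\,dz,
\]
where $\sigma$ is a closed curve enclosing $z=0$ together with exactly the $q+k$ roots of smallest modulus. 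On a simply connected $U\subset\cee\setminus(\Gamma_k\cup\Lam)$ one may keep $\sigma$ locally independent of $\lam$ except where $|z_{q+k}(\lam)|=|z_{q+k+1}(\lam)|$; off those sets the right-hand side is analytic, and as the bottom-$(q+k)$ product has finitely many local analytic branches (differing by unimodular constants across the \lq\lq identically equal\rq\rq\ regions) that patch along $U$ by simple connectedness, $\prod_{j=1}^{q+k}z_j(\lam)$ is analytic on $U$ with logarithmic derivative given by the displayed formula. Across an \lq\lq identically equal\rq\rq\ region all roots of the synchronized block share a common $z_j'/z_j$, so $\sum_{j\le q+k}z_j'/z_j$ is an explicit affine combination of two residue integrals with contours drawn just inside and just outside the block, hence extends analytically there as well. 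Thus $\big(\prod_{j\le q+k}z_j\big)'/\prod_{j\le q+k}z_j$ defines a single meromorphic function on $\cee\setminus\Gamma_k$, and its only poles come from the term $f_{-q}'/f_{-q}$ and from roots escaping to $0$ or $\infty$, all of which sit at points of~$\Lam$.

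The main obstacle I anticipate is the analysis near the points of $\Lam$: one has to extract from the Puiseux expansions precisely how many roots tend to $0$ and how many to $\infty$, then match these counts with the numbers $m_k^{(l)}$ of Definition~\ref{def:mkl} so as to confirm simultaneously that $\lam_l\in\Gamma_k$ exactly when $m_k^{(l)}>0$ and that the logarithmic derivative has at worst a simple pole at $\lam_l$. A secondary technical nuisance is the labeling freedom in \eqref{def:Gammak:mod}: when the block of equal-modulus roots straddling positions $q+k$ and $q+k+1$ splits into several synchronized classes, one must check that the relabeling clause picks out exactly the genuine arcs, leaving the local picture a finite union of analytic arcs as claimed.
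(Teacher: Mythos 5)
Your proposal is correct and follows essentially the same route as the paper: part (a) mimics the Schmidt--Spitzer local structure argument adapted to the modified definition \eqref{def:Gammak:mod}, and part (b) rests on the same two observations the paper uses, namely that the product of the roots strictly below the synchronized block is analytic (your contour-integral/residue formula is just the explicit form of the argument the paper cites from Duits--Kuijlaars) and that roots differing by a unimodular constant share the same logarithmic derivative, which makes the labeling ambiguity harmless. The concerns you flag at the end (behaviour at $\Lam$ via the Newton polygon, and the relabeling clause) are handled in the paper exactly as you anticipate, so they are not gaps.
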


Lemma~\ref{lemma:strucwk} is proved in
Section~\ref{subsection:proof:lemmamerom}.

Due to Lemma~\ref{lemma:strucwk}, we can uniquely define the measure $\mu_k$ on
$\Gamma_k$ (more precisely on $\wtil\Gamma_k$) by means of
\eqref{def:measuresk}. We have the following generalization of Theorem
\ref{theorem:main1}.

\begin{theorem}\label{theorem:main1:mod} (Equilibrium problem with roots of identically equal
modulus). Consider the setting of Theorem~\ref{theorem:main1} but assume that
hypothesis~H2$k$ fails. Define the curves $\Gamma_k$, $\wtil\Gamma_k$ as in
\eqref{def:Gammak:mod} and \eqref{def:Gammak:wtil} and the measures $\mu_k$ as
in \eqref{def:measuresk}, taking into account Lemma~\ref{lemma:strucwk}. Then
Theorem \ref{theorem:main1} remains valid.
\end{theorem}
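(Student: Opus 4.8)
The plan is to reduce the statement, as far as possible, to Theorem~\ref{theorem:main1} by showing that the only place where hypothesis~H2$k$ was actually used is in guaranteeing that $\Gamma_k$ is a curve (a finite union of analytic arcs and points), and that with the modified definition \eqref{def:Gammak:mod} this structural property is restored by Lemma~\ref{lemma:strucwk}. Concretely, I would go through the proof of Theorem~\ref{theorem:main1} step by step and check that every ingredient goes through verbatim once ``$\Gamma_k$'' is read as the set in \eqref{def:Gammak:mod}: the admissibility of $\vec\mu$ in Part~(a), the Euler--Lagrange identities in Part~(b), and the convexity/uniqueness argument in Part~(c). The key observation is that the boundary values $z_{j\pm}(\lam)$ entering \eqref{def:measuresk} are now taken across the \emph{new} $\Gamma_k$, and on any component where several roots have identically equal modulus (hence differ by a unimodular constant, by the maximum modulus argument preceding \eqref{def:Gammak:mod}) those roots contribute nothing to the jump $z_{j+}'/z_{j+}-z_{j-}'/z_{j-}$; so $\mu_k$ is genuinely supported on $\wtil\Gamma_k$ and Lemma~\ref{lemma:strucwk}(b) provides exactly the meromorphic continuation of $\big(\prod_{j\le q+k}z_j\big)'/\prod_{j\le q+k}z_j$ to $\cee\setminus\Gamma_k$ with poles only in $\Lam$ that the original proof needs.

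The first real step is therefore Part~(a): I would verify that $\mu_k$ as defined by \eqref{def:measuresk}, now across the modified $\Gamma_k$, is a positive measure of finite logarithmic energy with total mass $\til m_k$. Positivity and the local structure near ordinary points of the arcs is identical to the H2$k$ case. For the total mass I would argue as in the proof of Theorem~\ref{theorem:main1} via a contour/residue computation of $\frac1{2\pi i}\oint \big(\prod_{j\le q+k}z_j\big)'/\prod_{j\le q+k}z_j\,d\lam$ around all of $\Gamma_k\cup\Lam$, using Lemma~\ref{lemma:strucwk}(b) to make sense of the integrand off $\Gamma_k$ and reading the answer off the Newton-polygon data, i.e.\ $m_k$ at infinity and the local exponents $m_k^{(l)}$ at the points $\lam_l$; this yields $\til m_k = m_k-\sum_l m_k^{(l)}$ as before. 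The finiteness of the logarithmic energy follows because $\mu_k$ is, locally, a smooth density on finitely many analytic arcs with at worst integrable singularities at the finitely many exceptional points (endpoints, self-intersections, and points of $\Lam$), exactly as in the H2$k$ setting.

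For Part~(b) I would derive the Euler--Lagrange equalities on $\wtil\Gamma_k$ by the same device as in Theorem~\ref{theorem:main1}: form the function $U_k(\lam)$ equal to the left-hand side of the displayed identity, show it is harmonic off $\Gamma_{k-1}\cup\Gamma_k\cup\Gamma_{k+1}\cup\Lam$, that its normal-derivative jumps across $\wtil\Gamma_k$ cancel because of the relation between the density \eqref{def:measuresk} and the boundary values of $\big(\prod_{j\le q+k}z_j\big)'/\prod_{j\le q+k}z_j$, and that the logarithmic singularities at $\lam_l$ built into the external-field coefficients $-m_{k-1}^{(l)}+2m_k^{(l)}-m_{k+1}^{(l)}$ are exactly matched by the local behaviour of the potentials of $\mu_{k-1},\mu_k,\mu_{k+1}$ there; hence $U_k$ extends to a bounded harmonic function on a neighborhood of $\wtil\Gamma_k$ and is locally constant. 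All of this is insensitive to whether a component of $\Gamma_k$ arises from genuinely varying moduli or from the new clause in \eqref{def:Gammak:mod}, because on the latter components the offending roots drop out of the jump. Part~(c), uniqueness, then follows formally from Parts~(a)--(b) by the standard convexity argument for this Nikishin-type functional: the quadratic form $\sum I(\nu_k)-\sum I(\nu_k,\nu_{k+1})$ is positive definite on differences of admissible (hence zero-total-mass-difference, finite-energy) vectors — this is the positivity of the interaction matrix, a tridiagonal $2,-1$ matrix — and the external-field term is linear, so the minimizer is unique and the variational inequalities of Part~(b) characterize it; this part uses nothing about $\Gamma_k$ beyond that it is the common support, so it transfers with no change.

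The main obstacle I anticipate is \textbf{not} the convexity/uniqueness step but the bookkeeping in Parts~(a) and (b) on the components of $\Gamma_k$ created purely by the new clause in \eqref{def:Gammak:mod} — i.e.\ making rigorous that, although $z_{q+k}(\lam)$ and $z_{q+k+1}(\lam)$ are there indistinguishable by modulus and could be relabeled freely, the quantity $\prod_{j=1}^{q+k}z_j(\lam)$ (and hence $\mu_k$) is nonetheless well defined and analytic across such a component, with zero jump, so that these components carry no mass and create no extra endpoints or singular contributions to the energy. Lemma~\ref{lemma:strucwk}(b) is precisely the tool that handles this, so in practice the proof is: invoke Lemma~\ref{lemma:strucwk} to know $\Gamma_k$ (new definition) is a curve and that the relevant logarithmic derivative is meromorphic on $\cee\setminus\Gamma_k$ with poles in $\Lam$, and then observe that the proof of Theorem~\ref{theorem:main1} used H2$k$ \emph{only} through these two facts; everything downstream is verbatim. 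I would close by remarking that, as a consequence, all later statements in the paper that quote Theorem~\ref{theorem:main1} under H2$k$ remain valid under its failure once the symbols $\Gamma_k$ are reinterpreted via \eqref{def:Gammak:mod}.
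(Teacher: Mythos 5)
Your proposal is correct and follows essentially the same route as the paper: the paper's own proof of Theorem~\ref{theorem:main1:mod} is a one-line reduction observing that, thanks to Lemma~\ref{lemma:strucwk}, the entire proof of Theorem~\ref{theorem:main1} (Propositions~\ref{prop:threeproperties}--\ref{prop:cauchytransformcontour} and the convexity argument) goes through unchanged with $\Gamma_k$ read as \eqref{def:Gammak:mod}. Your elaboration of why H2$k$ enters only through the curve structure of $\Gamma_k$ and the meromorphy of $w_k'/w_k$ is exactly the content that makes this reduction legitimate.
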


This theorem is proved in Section~\ref{subsection:proof:theorem:main1}.

\section{The measure $\mu_0$ as the limiting eigenvalue distribution of
the banded block Toeplitz matrix $T_n(A)$} \label{section:equiltoep}

Using the results of the previous section, we can associate a vector
equilibrium problem to the algebraic equation $f(z,\lam)=0$ in \eqref{def:f}
that is defined from the banded block Toeplitz matrix $T_n(A)$. We want to show
that the measure $\mu_0$ in the equilibrium problem is the absolutely
continuous part of the limiting distribution of the eigenvalues of $T_n(A)$. As
discussed before, this will require the hypothesis H3 (or a suitable analogue
thereof if H2 fails) to hold true. The next theorem should be compared with
Widom's result~\cite[Theorem~6.1]{Widom1}. We define the normalized eigenvalue
counting measure $\mu_{0,n}$ of $T_n(A)$ as
\begin{align} \label{eq:nun} \mu_{0,n} := \frac{1}{n}\sum_{\lam \in \sp
T_n(A)}\delta_\lambda,
\end{align}
where $\delta_\lam$ is the Dirac measure at $\lambda$ and each eigenvalue is
counted according to its multiplicity.

\begin{theorem}\label{theorem:eigTn} (Limiting eigenvalue distribution of $T_n(A)$). Let $A(z)$ be such that
the assumptions in parts (a) and/or (b) of Prop.~\ref{prop:twoconditions} are
satisfied and define $\Gamma_0$, $G_0$ as in \eqref{def:curves0} and
\eqref{def:tilGamma0}. Then
\begin{equation}\label{main2} \liminf_{n\to\infty} \mathrm{sp}\ T_n(A) =
\limsup_{n\to\infty} \mathrm{sp}\ T_n(A) = \Gamma_0\cup G_0,
\end{equation}
and \begin{equation}\label{weakconvergence} \lim_{n\to\infty} \int \phi(z)\
d\mu_{0,n}(z)= \int \phi(z)\ d\mu_{0}(z)+\sum_{l=1}^L m_{0}^{(l)}\phi(\lam_l)
\end{equation}
for every bounded continuous function $\phi$ on $\cee$.

Moreover, for each $\lam\in G_0$ there is a positive integer $j\in\enn$ (more
precisely, the multiplicity of $\lam$ as a zero of $C_0$) such that for every
sufficiently small open disk $U$ around $\lam$, one has
\begin{equation}\label{weakconvergence:isolated} |U\cap \mathrm{sp}\ T_n(A)| = j,
\end{equation}
for all $n$ sufficiently large, where we take into account eigenvalue
multiplicities.
\end{theorem}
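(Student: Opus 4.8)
The plan is to prove Theorem~\ref{theorem:eigTn} by importing the machinery of Widom~\cite{Widom1} and combining it with the combinatorial/potential-theoretic results established earlier in the paper, in three stages: first the structural identities for the determinant $\det(T_n(A)-\lam I_{rn})$, then the weak convergence statement, and finally the local counting near the points of $G_0$.

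\textbf{Step 1: the Widom determinant formula.} I would begin from the formula (Prop.~\ref{proposition:Widomformula}, referenced in the excerpt) expressing $\det(T_n(A)-\lam I_{rn})$ for $\lam\notin\Gamma_0$ as a product of the form $C_0(\lam)\prod_{j=1}^{q}(\ldots)\cdot f_{p}(\lam)^{\,\text{power}}$ times a factor that behaves like $\big(\prod_{j=1}^{q} z_j(\lam)\big)^{n}$ up to fixed-order corrections; more precisely the $n$-dependence is governed by $\big|\prod_{j=1}^{q} z_j(\lam)\big|^{n}$ and a bounded prefactor $C_0(\lam)$ times powers of the outer coefficients. Taking $\frac1n\log|\det(T_n(A)-\lam I_{rn})|$ then gives, for $\lam$ outside $\Gamma_0$, a limit equal to the logarithmic potential $-\int\log\frac{1}{|\lam-x|}\,d\mu_0(x)-\sum_l m_0^{(l)}\log\frac1{|\lam-\lam_l|}+\text{const}$; here I use Theorem~\ref{theorem:main1}(b) with $k=0$ together with the identification of $\til m_0$ as the total mass of $\mu_0$ and of $m_0^{(l)}$ as the point-mass weights at $\lam_l$. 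This is the standard route relating eigenvalue counting measures to limits of $\frac1n\log|p_n|$ via the unbounded-support version of the Grommer--Hurwitz / lower-envelope argument.

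\textbf{Step 2: weak convergence.} Assuming parts (a) and/or (b) of Prop.~\ref{prop:twoconditions}, hypotheses H2 and H3 hold, so $\liminf$ and $\limsup$ of $\sp T_n(A)$ coincide with $\Gamma_0\cup G_0$ by Widom's result~\eqref{ss:block} — this gives~\eqref{main2} directly once one checks the hypotheses are met, which is exactly what Prop.~\ref{prop:twoconditions} provides. For~\eqref{weakconvergence} I would argue that the normalized counting measures $\mu_{0,n}$ are tight (the eigenvalues stay in a fixed compact set, since all the $z_j(\lam)$ are controlled and $\Gamma_0\cup G_0$ is compact under the hypotheses), extract a weakly convergent subsequence with limit $\nu$, and identify $\nu$: by the Step~1 potential computation, $U^{\nu}(\lam):=\int\log\frac1{|\lam-x|}d\nu(x)$ agrees with $U^{\mu_0+\sum_l m_0^{(l)}\delta_{\lam_l}}(\lam)$ on the unbounded component of $\cee\setminus(\Gamma_0\cup G_0)$ (and in fact on all of $\cee\setminus(\Gamma_0\cup G_0)$ using that this set is connected in case (a), or handled component-by-component in case (b)); since both measures have the same (finite) total mass $m_0=\til m_0+\sum_l m_0^{(l)}$, the unicity theorem for logarithmic potentials forces $\nu=\mu_0+\sum_l m_0^{(l)}\delta_{\lam_l}$. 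As the limit is independent of the subsequence,~\eqref{weakconvergence} follows. The absolute-continuity of $\mu_0$ on $\Gamma_0$ and its total mass are exactly what Theorem~\ref{theorem:main1} supplies.

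\textbf{Step 3: local count near $G_0$.} For $\lam_*\in G_0$, which by definition~\eqref{def:tilGamma0} is a zero of the analytic (away from $\Gamma_0$) function $C_0$, I would use the Widom formula of Step~1 once more but now keep track of the \emph{location} of zeros: near $\lam_*$ the factor $\big(\prod z_j(\lam)\big)^n$ is bounded away from $0$ and $\infty$ and varies slowly, while $C_0(\lam)$ has a zero of some multiplicity $j$ at $\lam_*$; an application of Rouch\'e's theorem (or Hurwitz's theorem) to $\det(T_n(A)-\lam I_{rn})$ versus its dominant factor on a small circle around $\lam_*$ shows that, for $n$ large, exactly $j$ eigenvalues (with multiplicity) lie in that disk. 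This gives~\eqref{weakconvergence:isolated}, and consistency with~\eqref{weakconvergence} then forces the total point mass contributed at $\lam_*$ to be accounted for; note in case (a) $G_0$ is actually empty (connectedness plus no interior points is the hypothesis of Prop.~\ref{prop:twoconditions}(a) which is shown to give H3 with $G_0=\emptyset$ in the proof there — I would invoke that), so this step is only substantive in case (b).

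\textbf{Main obstacle.} The delicate point is Step~1: making the Widom asymptotics for $\det(T_n(A)-\lam I_{rn})$ uniform enough on compact subsets of $\cee\setminus(\Gamma_0\cup G_0)$ that one may pass to the limit of $\frac1n\log|\det|$ and invoke the potential-uniqueness principle — in particular handling the boundary behaviour as $\lam$ approaches $\Gamma_0$, controlling the possible sub-exponential prefactors uniformly, and dealing with the branch-choice ambiguities in $\prod_{j=1}^{q}z_j(\lam)$ (resolved by Lemma~\ref{lemma:strucwk}(b) if H2 fails, but here H2 holds). The point-mass bookkeeping — reconciling $\sum_l m_0^{(l)}\phi(\lam_l)$ in~\eqref{weakconvergence} with the multiplicities $j$ in~\eqref{weakconvergence:isolated} — also needs care, since $G_0$ and $\Lam$ need not coincide: a $\lam_l\in\Lam$ with $m_0^{(l)}>0$ carries mass by Definition~\ref{def:mkl}, whereas a generic point of $G_0$ carries integer mass from the $C_0$-zero count; one must check these two mechanisms are compatible and exhaustive, which is precisely where Prop.~\ref{prop:componentsGammak} (integrality of $\mu_0(C)+\sum m_0^{(l)}$) does the reconciling.
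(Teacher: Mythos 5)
Your proposal follows essentially the same route as the paper: Widom's determinant identity with the dominant term $S_0=\{1,\ldots,q\}$, identification of the limit via the potential of $\mu_0+\sum_l m_0^{(l)}\delta_{\lam_l}$ using Prop.~\ref{prop:cauchytransformcontour}, and Hurwitz/Rouch\'e near the zeros of $C_0$ for \eqref{weakconvergence:isolated}. The only structural difference is that the paper works with the logarithmic \emph{derivative} $\frac1n(\det T_n(A(z)-\lam I_r))'/\det T_n(A(z)-\lam I_r)$, i.e.\ convergence of Cauchy transforms on compact subsets of $\cee\setminus(\Gamma_0\cup G_0)$, rather than with $\frac1n\log|\det|$ and potential unicity; this sidesteps some of the branch and prefactor issues you list as the ``main obstacle.'' Two corrections: (i) your claim that $G_0=\emptyset$ in case (a) is false --- the proof of Prop.~\ref{prop:twoconditions}(a) only shows $C_0(\lam)\neq 0$ near $\infty$ and hence that $G_0$ is \emph{finite}; Example~1 of the paper (symbol \eqref{example:symbol2x2}) satisfies the hypotheses of part (a) and has $G_0=\{-3,2\}$, so Step~3 is substantive in case (a) as well (your Hurwitz argument still covers it, so nothing breaks); (ii) your tightness argument (``the eigenvalues stay in a fixed compact set since $\Gamma_0\cup G_0$ is compact'') is circular as stated, since compactness of $\limsup_n \sp T_n(A)$ does not by itself prevent eigenvalues from escaping to infinity --- the paper gets uniform boundedness of the spectra from Gerschgorin's circle theorem applied to the uniformly bounded entries of $T_n(A)$.
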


Theorem~\ref{theorem:eigTn} shows that the limiting eigenvalue distribution of
$T_n(A)$ for $n\to\infty$ consists of the absolutely continuous part $\mu_0$
together with a point mass of mass $m_0^{(l)}$ at each $\lam_l\in\Lam$,
$l=1,\ldots,L$. The theorem also shows that $G_0$ attracts \emph{isolated
eigenvalues} in the spectrum of $T_n(A)$. The theorem will be proved in
Section~\ref{subsection:prooftheorem:eigTn}.

It can be checked that $m_0^{(l)}>0$ implies $\lam_l\in\Gamma_0$. The point
$\lam_l$ can then either be an isolated point of $\Gamma_0$ or it can lie on
one or more analytic arcs of $\Gamma_0$.

Incidentally, the occurrence of point masses at the points of
$\{\lam_l\in\Lam\mid m_0^{(l)}>0\}$ can already be seen at the level of the
finite $n$ matrices $T_n(A)$:

\begin{proposition} \label{prop:ptmass:finiten}
Let $A(z)$ in \eqref{def:A} be the symbol of an arbitrary banded block Toeplitz
matrix. Then there exists a constant $c\in\er$ such that
\begin{itemize}\item[(a)] For each $\lam_l\in\Lam$, $l=1,\ldots,L$, we have that
\begin{equation}\label{ptmass:finiten}
\mult_{\lam-\lam_l} \det T_n(A(z)-\lam I_r)\geq m_0^{(l)}n-c,\qquad \textrm{for
all }n\in\enn.
\end{equation}
\item[(b)] We have that
\begin{equation}\label{ptmass:infinity}
\deg_{\lam} \det T_n(A(z)-\lam I_r)\leq m_0n+c,\qquad\textrm{for all }
n\in\enn.
\end{equation}
\end{itemize}
\end{proposition}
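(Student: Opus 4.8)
The plan is to analyze $\det T_n(A(z)-\lambda I_r)$ via Widom's determinant formula (Prop.~\ref{proposition:Widomformula} as referenced in the excerpt), which expresses this determinant, up to a rational prefactor $C_0(\lambda)$ independent of $n$, in terms of the quantity $f_0(\lambda)^{n+\alpha}$ together with a product over a subset of the roots $z_j(\lambda)$ of $f(z,\lambda)=0$. More precisely, one has a relation of the schematic form
\begin{equation*}
\det T_n(A(z)-\lambda I_r) = (\text{const})\cdot f_0(\lambda)^{n}\cdot \Big(\prod_{j=1}^{q} z_j(\lambda)\Big)^{-n}\cdot (\text{rational function of }\lambda\text{ bounded in }n)
\end{equation*}
valid away from $\Gamma_0\cup\Lambda$; I would pin down the exact exponents from Prop.~\ref{proposition:Widomformula}, where the key point is that the $n$-dependence of the order of vanishing (or pole order) at $\lambda=\lambda_l$ is governed entirely by $\mathrm{ord}_{\lambda=\lambda_l}\big(f_0(\lambda)\prod_{j=1}^{q} z_j(\lambda)^{-1}\big)$, and the remaining factors contribute only a bounded-in-$n$ discrepancy absorbed into the constant $c$. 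For part (b), the degree in $\lambda$ at $\infty$ is controlled the same way, using the behavior of the roots and of $f_0$ as $\lambda\to\infty$.

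For part (a), the heart of the matter is to identify $m_0^{(l)}$ with the local exponent $\mathrm{ord}_{\lambda=\lambda_l}\big(f_0(\lambda)\prod_{j=1}^{q} z_j(\lambda)^{-1}\big)$, i.e.\ with the slope-counting from the Newton polygon of $f(z,\lambda)$ at $\lambda=\lambda_l$. Writing $\nu=\mathrm{ord}_{\lambda=\lambda_l}$ for brevity, one expands $f(z,\lambda)=\sum_k f_k(\lambda)z^k$ and studies the Newton polygon in the variable $\lambda-\lambda_l$: the multiplicities $\mathrm{mult}_{\lambda-\lambda_l}f_k$ are exactly the data feeding Definition~\ref{def:mkl}, and the convex lower hull computes $m_k^{(l)}$. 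The roots $z_j(\lambda)$ that stay bounded and bounded away from $0$ as $\lambda\to\lambda_l$ contribute nothing to $\nu\big(\prod_{j} z_j\big)$; the roots whose modulus tends to $0$ or $\infty$ have $\nu(z_j)$ equal to a Puiseux exponent read off from the Newton polygon, and summing these over $j=1,\dots,q$ (the $q$ roots inside the contour $\sigma_0$, i.e.\ the $q$ smallest in modulus) together with $\nu(f_0)=\mathrm{mult}_{\lambda-\lambda_l}f_0 = m_0^{(l)}$-contribution reorganizes, after a telescoping/slope-bookkeeping argument, precisely into $m_0^{(l)}$. This is the standard dictionary between Newton polygons and valuations of roots, applied at the finite point $\lambda_l$ rather than at $\lambda=\infty$.

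I expect the main obstacle to be making the bookkeeping of which roots lie "inside $\sigma_0$" rigorous near $\lambda=\lambda_l$: at such points some roots collide at $0$ or escape to $\infty$, so the ordering \eqref{ordering:roots} and the contour $\sigma_0$ in \eqref{def:C0} degenerate, and one must argue that the product $\prod_{j=1}^{q} z_j(\lambda)$ nonetheless extends holomorphically (resp.\ meromorphically) across $\lambda_l$ with the claimed order — this is where Prop.~\ref{proposition:Widomformula} and a residue/contour-deformation argument near $\sigma_0$ do the work, and where one must also handle the possibility $\lambda_l\in\Gamma_0$ (so $\sigma_0$ cannot be chosen canonically) by passing to the meromorphic continuation guaranteed by Lemma~\ref{lemma:strucwk}(b). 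Once the local exponent is correctly identified with $m_0^{(l)}$, inequality \eqref{ptmass:finiten} follows because the order of vanishing of a product is the sum of the orders of the (finitely many, $n$-independent in number) factors, with the bounded factors contributing at worst a fixed negative constant $-c$; and \eqref{ptmass:infinity} follows symmetrically, using $\deg_\lambda = -\mathrm{ord}_{\lambda=\infty}$ and the definition of $m_0$ via the concave upper hull in Definition~\ref{def:mk}.
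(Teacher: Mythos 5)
Your overall plan — invoke Widom's determinant identity (Prop.~\ref{proposition:Widomformula}) and translate the Newton polygon data at $\lam=\lam_l$ (resp.\ at $\lam=\infty$) into the order of vanishing (resp.\ degree) of $\det T_n(A(z)-\lam I_r)$ — is exactly the paper's approach. However, there are two genuine problems with your write-up.

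First, you repeatedly write $f_0$ where $f_{-q}$ is required. From \eqref{def:wS}, $w_S(\lam)=(-1)^q f_{-q}(\lam)\prod_{j\in S}z_j(\lam)^{-1}$, and the relevant Puiseux/Newton relation \eqref{Newton:polygon:l} gives $\sum_{j=1}^{q}s_j^{(l)}=\mult_{\lam-\lam_l}(f_{-q})-m_0^{(l)}$, so the correct local identity is $m_0^{(l)}=\mathrm{ord}_{\lam=\lam_l}\bigl(f_{-q}(\lam)\prod_{j=1}^q z_j(\lam)^{-1}\bigr)$. Your claimed identity $m_0^{(l)}=\mathrm{ord}_{\lam=\lam_l}\bigl(f_0(\lam)\prod_{j=1}^q z_j(\lam)^{-1}\bigr)$ is false in general (it differs from the truth by $\mult_{\lam-\lam_l}f_0-\mult_{\lam-\lam_l}f_{-q}$), and the parenthetical $\nu(f_0)=\mult_{\lam-\lam_l}f_0=m_0^{(l)}$ is wrong too: by Definition~\ref{def:mkl} one only has $m_0^{(l)}\leq\mult_{\lam-\lam_l}f_0$.

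Second, your ``schematic form'' extracts a single dominant factor times a ``rational function bounded in $n$,'' but Widom's formula \eqref{Widom:1} is a \emph{sum} over all $S$ with $|S|=q$, and to obtain a lower bound on the order of vanishing you must bound \emph{every} summand. The point — and this is what the paper does — is that for any such $S$ one has $w_S(\lam)=O\bigl((\lam-\lam_l)^{\mult_{\lam-\lam_l}(f_{-q})-\sum_{j\in S}s_j^{(l)}}\bigr)=O\bigl((\lam-\lam_l)^{m_0^{(l)}}\bigr)$, since the exponents $s_j^{(l)}$ are decreasing in $j$ so $\sum_{j\in S}s_j^{(l)}\leq\sum_{j=1}^q s_j^{(l)}$; hence each term $C_S(\lam)w_S(\lam)^{n+\alpha}$ vanishes to order at least $m_0^{(l)}(n+\alpha)$ minus a constant. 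Finally, the collision-of-roots issue you flag is handled in the paper far more lightly than your proposal suggests: one applies Widom's identity for the generic $\lam$ near $\lam_l$ where the roots are pairwise distinct, then passes to the exceptional $\lam$ by continuity; Lemma~\ref{lemma:strucwk}(b) concerns only $w_{S_0}=w_0$ and would not apply to general $w_S$ anyway.
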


Prop.~\ref{prop:ptmass:finiten} is established in
Section~\ref{subsection:prooftheorem:eigTn}.

\subsubsection*{The measure $\mu_k$ and the $k$th generalized eigenvalues of $T_n(A)$: discussion}
\label{subsection:geneig}

Fix $k\in\{-q+1,\ldots,p-1\}$ and define the cyclic shift
matrix \begin{equation}\label{def:Sshiftmx}  S:=\begin{pmatrix} 0 & z \\
I_{r-1} & 0
\end{pmatrix}. \end{equation}
Let $\lam$ be a parameter and consider the \lq shifted\rq\ symbol
\begin{equation}\label{def:A:k} S^{-k}(A(z)-\lam I_r) =:
A_{-\alpha_k}(\lam)z^{-\alpha_k}+\ldots+A_{\beta_k}(\lam) z^{\beta_k},
\end{equation}
for suitable $\alpha_k,\beta_k\in\enn$. We may assume that $\alpha_k,\beta_k$
are such that the coefficients $A_{-\alpha_k}$ and $A_{\beta_k}$ in
\eqref{def:A:k} are not identically zero, although this will not be essential.
Note that for $k=0$ we can take $\alpha_0=\alpha$ and $\beta_0=\beta$ as
in~\eqref{def:A}.

We consider the  \lq shifted\rq\ block Toeplitz matrix $T_n(S^{-k}(A(z)-\lam
I_{r}))$. Note that for $k\geq 0$, this block Toeplitz matrix is obtained from
$T_n(A)-\lam I_{rn}$ by skipping its first $k$ rows and adding $k$ new rows at
the bottom of the matrix, subject to the block Toeplitz structure. A similar
description holds for $k<0$, see also \cite{DD,DK}.

We define the \emph{$k$th generalized spectrum} of $T_n(A)$ as
\begin{equation}\label{geneig:def} \mathrm{sp}_k T_n(A) = \{\lam\in\cee\mid
\det(T_n(S^{-k}(A(z)-\lam I_{r})))=0\}. \end{equation}

\smallskip Inspired by Duits-Kuijlaars \cite{DK}, one may hope to interpret the
measure $\mu_k$, $k\neq 0$, as the absolutely continuous part of the weak limit
of the normalized counting measures of the $k$th generalized eigenvalues of
$T_n(A)$. This limiting distribution should then also have a point mass of mass
$m_k^{(l)}$ at $\lam=\lam_l$, $l=1,\ldots,L$.

It turns out that these ideas can indeed be established, provided that a
suitable analogue H3$k$ of hypothesis H3 holds true. Let us define the
following analogues of the objects $G_0$ and $C_0(\lam)$ in
\eqref{def:tilGamma0}--\eqref{def:C0}:
\begin{equation}\label{def:tilGammak} G_k := \{\lam\in\cee\setminus\Gamma_k\mid
C_{k}(\lam)=0\},\end{equation}  and
\begin{equation}\label{def:Ck} C_{k}(\lam) := \det\left( \frac{1}{2\pi
i}\int_{\sigma_k} z^{\mu-\nu} (A(z)-\lam I_r)^{-1}S^{k}\frac{dz}{z}
\right)_{\mu,\nu=1,\ldots,\alpha_k},\qquad\textrm{for}\
\lam\in\cee\setminus\Gamma_k,
\end{equation}
with $\alpha_k$ in \eqref{def:A:k}, and where $\sigma_k$ is a counterclockwise
oriented closed Jordan curve enclosing $z=0$ and the points $z_j(\lam)$,
$j=1,\ldots,q+k$, but no other roots of $f(z,\lam)=0$. In \eqref{def:Ck} the
determinant is taken of a matrix of size $r\alpha_k$ by $r\alpha_k$ and the
integral is again defined entry-wise.

The hypothesis H3$k$ now reads as follows:
\begin{flushleft}
H3$k$. The set $G_k$ in \eqref{def:tilGammak} has finite cardinality.
\end{flushleft}
Define the normalized counting measure
\begin{align} \label{eq:nun:k} \mu_{k,n} := \frac{1}{n}\sum_{\lam \in  \mathrm{sp}_k
T_n(A)}\delta_\lambda,
\end{align}
where again each root is counted according to its multiplicity.

\begin{proposition} Let $k\in\{-q+1,\ldots,p-1\}$ be such that the
hypotheses H2$k$ and H3$k$ hold true. Then the statements
\eqref{main2}--\eqref{weakconvergence:isolated} in Theorem~\ref{theorem:eigTn}
remain true, provided that we replace everywhere $\Gamma_0$, $G_0$,
$\mu_{0,n}$, $\mu_0$, $m_{0}^{(l)}$ and $\mathrm{sp}$ by $\Gamma_k$, $G_k$,
$\mu_{k,n}$, $\mu_k$, $m_{k}^{(l)}$ and $\mathrm{sp}_k$ respectively.
\end{proposition}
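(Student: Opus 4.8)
The plan is to reduce the statement to the already‑treated case $k=0$, i.e.\ to Theorem~\ref{theorem:eigTn}, by observing that the $k$th generalized spectrum of $T_n(A)$ is literally an \emph{ordinary} spectrum, namely that of the shifted symbol $B(z,\lam):=S^{-k}(A(z)-\lam I_r)$. By \eqref{geneig:def} the measure $\mu_{k,n}$ in \eqref{eq:nun:k} is the normalized zero‑counting measure of the polynomial $\lam\mapsto\det T_n(B(z,\lam))$, and $B$ is a banded block Toeplitz symbol whose coefficients depend affinely on $\lam$. Two elementary identities carry all the bookkeeping. First, since $\det S=\pm z$, we have $\det B(z,\lam)=\pm z^{-k}f(z,\lam)$, so the algebraic curve attached to $B$ is the \emph{same} curve $f(z,\lam)=0$, only with $(p,q)$ replaced by $(p-k,q+k)$; hence the roots $z_j(\lam)$ are unchanged, the set $\Lam$ in \eqref{def:laml} is unchanged, the ``index‑$0$'' curve $\{|z_{q+k}|=|z_{q+k+1}|\}$ of the $B$‑problem is $\Gamma_k$, and the Newton‑polygon numbers of Definitions~\ref{def:mk}--\ref{def:mkl} for $z^{-k}f$ at index $0$ are obtained by evaluating the concave/convex hulls of $\{\deg f_j\}$, resp.\ $\{\mult_{\lam-\lam_l}f_j\}$, at $j=k$, i.e.\ they equal $m_k$ and $m_k^{(l)}$; in particular the measure $\mu_0$ of the $B$‑problem is $\mu_k$, of total mass $\tilde m_k=m_k-\sum_l m_k^{(l)}$, consistent with Theorem~\ref{theorem:main1}(a). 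Second, $B(z,\lam)^{-1}=(A(z)-\lam I_r)^{-1}S^k$, so the function $C_0$ of \eqref{def:C0} for the symbol $B$ coincides with the function $C_k$ of \eqref{def:Ck} (contour, matrix size and integrand all matching), whence $G_0$ of the $B$‑problem is $G_k$, and hypotheses H2, H3 for $B$ are precisely H2$k$, H3$k$.

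With this dictionary in hand I would rerun the argument of Theorem~\ref{theorem:eigTn} verbatim for the symbol $B(z,\lam)$ in place of $A(z)-\lam I_r$. The engine is Widom's determinant formula, Proposition~\ref{proposition:Widomformula}, which holds for an arbitrary banded block Toeplitz symbol and in particular for $B$: it expresses $\det T_n(B(z,\lam))$ as an explicit combination of $n$th powers of products of the roots $z_j(\lam)$ weighted by functions of the $C_k$‑type. From it one reads off, exactly as for $k=0$, (i) that the zeros of $\lam\mapsto\det T_n(B(z,\lam))$ accumulate on $\Gamma_k\cup G_k$, giving \eqref{main2}; (ii) that, off $\Lam$, their normalized counting measure converges weakly to $\mu_k$ --- this is the same computation (cf.\ \cite{Hirschman,Widom1}) that yields the absolutely continuous part in Theorem~\ref{theorem:eigTn}, and uses only the algebraic curve together with H2$k$; and (iii) that near each $\lam\in G_k$ exactly $j=\mult_\lam C_k$ zeros collect for all large $n$, giving \eqref{weakconvergence:isolated}. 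The point masses in \eqref{weakconvergence} come from the $\lam$‑dependence of $\det T_n(B(z,\lam))$ as a polynomial: the analogue of Proposition~\ref{prop:ptmass:finiten} for $B$ --- whose proof again only inspects the Newton polygon of $\det B(z,\lam)=\pm z^{-k}f(z,\lam)$ --- gives $\mult_{\lam-\lam_l}\det T_n(B)\ge m_k^{(l)}n-c$ and $\deg_\lam\det T_n(B)\le m_kn+c$, so that asymptotically $m_k^{(l)}n$ zeros stay frozen at $\lam_l$ while the remaining $\tilde m_kn+O(1)$ zeros form the absolutely continuous part on $\Gamma_k$; this produces \eqref{weakconvergence} with the point mass $m_k^{(l)}$ at each $\lam_l$.

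The point requiring care --- and the main obstacle --- is that $B(z,\lam)$ is \emph{not} of the form ``fixed symbol $-\lam I_r$'': one has $T_n(B(z,\lam))=T_n(S^{-k}A(z))-\lam\,T_n(S^{-k})$, a genuine matrix pencil rather than $T_n(A)-\lam I_{rn}$, so Theorem~\ref{theorem:eigTn} cannot be quoted as a black box. Instead one must check that its proof in Section~\ref{subsection:prooftheorem:eigTn} never uses the special normalization \eqref{f:expansionlargelambda} (which says $\deg f_0=r$ and $\deg f_k\le r-1$ otherwise, and which \emph{fails} for $z^{-k}f$), but only Widom's formula for the symbol at hand together with properties of the curve $f(z,\lam)=0$ and of the Newton‑polygon numbers. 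This is exactly why Section~\ref{section:equil:arbitrarycurve} was developed for an \emph{arbitrary} algebraic curve with an arbitrary choice of the indices $p,q$: the whole construction is invariant under the shift $z^{-k}$, and at the few places where \eqref{f:expansionlargelambda} entered (e.g.\ bounding $|\Lam|$, or bounding the $\lam$‑degree of $\det T_n$ by a multiple of $r$) one simply replaces $r$ by $m_k$ and carries the point‑source terms $m_k^{(l)}$, just as is already done for general curves in Theorems~\ref{theorem:main1} and~\ref{theorem:main1:mod}. Since H2$k$ is assumed, no appeal to the modified curves \eqref{def:Gammak:mod} is needed; if one dropped H2$k$, the same reduction combined with Theorem~\ref{theorem:main1:mod} and Lemma~\ref{lemma:strucwk} applied to $B$ would handle that case as well.
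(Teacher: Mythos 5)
The paper does not actually prove this proposition: after stating it, the text immediately notes that H3$k$ is ``very delicate to handle'' and drops the subject, so there is no ``paper's own proof'' to compare against. Your reduction to the shifted symbol $B(z,\lam)=S^{-k}(A(z)-\lam I_r)$ is nevertheless the natural and correct strategy, and your dictionary is accurate: $\det B = \pm z^{-k}f$, the roots and $\Lam$ are unchanged, the index-$0$ Newton-polygon numbers for $z^{-k}f$ are $m_k,m_k^{(l)}$, the index-$0$ critical curve is $\Gamma_k$, and $C_0$ for $B$ is exactly $C_k$, so H2/H3 for $B$ are H2$k$/H3$k$.

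Where the proposal is incomplete is precisely the step you flag but do not resolve. The proof of Theorem~\ref{theorem:eigTn} upgrades weak-star convergence (against functions vanishing at infinity) to full weak convergence by invoking Gerschgorin's circle theorem to place all $\mu_{0,n}$ in a fixed compact set; this uses the $T_n(A)-\lam I_{rn}$ form and does not transfer to the pencil $T_n(S^{-k}A)-\lam T_n(S^{-k})$. You would need a substitute: for instance, control of $\det T_n(B(z,\lam))$ for $|\lam|>R$ uniformly in $n$ via the dominant term $C_{S_0}(\lam)w_{S_0}(\lam)^{n+\alpha_k}$ in Widom's formula, together with a matching lower bound $\deg_\lam\det T_n(B)\ge m_k n - c$ so that no mass escapes to infinity. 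This in turn interacts with a point you do not address: when the Newton polygon of $f$ is not strictly concave between indices $k$ and $k+1$ (i.e.\ $s_{q+k}=s_{q+k+1}$), the curve $\Gamma_k$ is unbounded, the dominant-term separation fails to improve at infinity, and the tightness argument needs more care. Finally, the appeal to Theorems~\ref{theorem:main1}/\ref{theorem:main1:mod} to argue that ``the whole construction is invariant under the shift'' is valid for the potential-theoretic side, but those theorems never touch the spectral-asymptotics/tightness issues that are the genuine obstruction here. In short: right reduction, correct bookkeeping, but the tightness/boundedness replacement for Gerschgorin and the possible non-compactness of $\Gamma_k$ are genuine gaps in the argument as written.
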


Unfortunately the hypothesis H3$k$ is very delicate to handle, and we have been
unable to obtain sufficient conditions in the style of
Prop.~\ref{prop:twoconditions} for a reasonably large class of symbols $A(z)$.
For this reason, we will not discuss generalized eigenvalues any further in
this paper.

\section{A case study: scalar banded matrices}
\label{section:casestudy:scalarband}

In this section we specialize our results to the case where $T_n(A)$ is a
\emph{scalar} banded matrix with non-vanishing outer diagonals. More precisely,
we assume that $T_n(A)$ is a banded block Toeplitz matrix as in
\eqref{blockToeplitzmatrix}, that can be written in the scalar form
\begin{equation}
T_n(A) =
\begin{pmatrix} \label{blockToeplitzmatrix:scalar}
a_0^{(0)} & \ldots & a_0^{(-q)}  & & & 0 \\
\vdots & \ddots & & \ddots & &  \\
a_p^{(p)} &  & \ddots & & \ddots &  \\
 & \ddots & & \ddots &  & a_{rn-q-1}^{(-q)} \\
  & & \ddots &  & \ddots & \vdots \\
0   & & & a_{rn-1}^{(p)} & \ldots & a_{rn-1}^{(0)}
\end{pmatrix}_{rn\times rn},
\end{equation}
where the numbers $a_i^{(k)}\in\cee$ are such that
\begin{equation}\label{recurrent:as}
a_i^{(k)}=a_{i\textrm{ mod }r}^{(k)},\qquad
\end{equation} for all $i\in\enn\cup\{0\}$ and $k=-q,\ldots,p$, and with
\begin{equation}
\label{convention:nonzero} a_i^{(p)}\neq 0,\qquad a_i^{(-q)}\neq 0,
\end{equation}
for all $i=0,\ldots,r-1$. To avoid trivial cases we again assume that
$\min(p,q)\geq 1$. We will see in a moment that the notations $p$ and $q$ in
\eqref{blockToeplitzmatrix:scalar} are consistent with those used before in
\eqref{f:expansion}.

The representations \eqref{blockToeplitzmatrix} and
\eqref{blockToeplitzmatrix:scalar} are related as follows:
\begin{equation}\label{def:smod} \alpha:=\lceil q/r \rceil,\qquad \beta:=\lceil p/r \rceil,\end{equation}
and the matrices $A_k$, $k=-\alpha,\ldots,0$ in \eqref{blockToeplitzmatrix} are
obtained by taking the submatrix formed by the first $r$ rows of
\eqref{blockToeplitzmatrix:scalar} and partitioning it in blocks of size
$r\times r$ as follows:
\begin{equation}\label{def:Toeplitzblocks}
\begin{pmatrix} a_0^{(0)} & \ldots & a_0^{(-q)} &  & 0 & \\
\vdots &  & & \ddots \\
a_{r-1}^{(r-1)} & \ldots & \ldots & \ldots & a^{(-q)}_{r-1}
\end{pmatrix} = \begin{pmatrix}
A_0 & A_1 & \ldots & A_{-\alpha}
\end{pmatrix}.
\end{equation}
Here we add $r\alpha-q = r\lceil q/r \rceil -q$ zero columns at the right of
the matrix in the left hand side of \eqref{def:Toeplitzblocks} in order to have
compatible matrix dimensions. Similarly the matrices $A_k$, $k=0,\ldots,\beta$
are obtained by taking the submatrix formed by the first $r$ \emph{columns} of
\eqref{blockToeplitzmatrix:scalar} and partitioning it in blocks of size
$r\times r$.

One checks that the symbol $A(z)$ can be written as
\begin{equation}\label{symbol:shiftmx}
A(z) = \sum_{k=-q}^p \diag(a_0^{(k)},\ldots,a_{r-1}^{(k)})S^k,
\end{equation}
where $S$ is the cyclic shift matrix in \eqref{def:Sshiftmx}. There is also the
alternative representation
\begin{equation}\label{symbol:shiftmx:sym}
DA(z^r)D^{-1} = \sum_{k=-q}^p
z^k\diag(a_0^{(k)},\ldots,a_{r-1}^{(k)})\widetilde S^k,
\end{equation}
where $D:=\diag(1,z,\ldots,z^{r-1})$ and
\begin{equation}\label{def:Sshiftmx:bis} \widetilde S:=\begin{pmatrix} 0 & 1
\\ I_{r-1} & 0
\end{pmatrix}. \end{equation}
One may argue that \eqref{symbol:shiftmx:sym} is more natural than
\eqref{symbol:shiftmx}, in the sense that it gives the same weight $z^k$ to
\emph{all} the entries on the $k$th scalar diagonal of the matrix
\eqref{blockToeplitzmatrix:scalar}. From this representation we also obtain
that
\begin{equation}\label{def:algcurve:sym}
f(z^r,\lam) = \det\left(-\lam I_r+\sum_{k=-q}^p
z^k\diag(a_0^{(k)},\ldots,a_{r-1}^{(k)})\widetilde S^k\right),
\end{equation}
recall \eqref{def:f}.

\begin{proposition}\label{prop:fstructure} (Structure of $f$). Let $T_n(A)$ be as in
\eqref{blockToeplitzmatrix:scalar}--\eqref{convention:nonzero}. Then
$f(z,\lam)$ in \eqref{def:algcurve:sym} can be written in the form
\begin{equation}\label{f:series}
f(z,\lam) = f_{-q}(\lam)z^{-q}+\ldots + f_0(\lam) + \ldots+ f_{p}(\lam)z^p,
\end{equation}
where all the coefficients $f_k(\lam)$, $k=-q,\ldots,p$ are polynomials in
$\lam$. The outermost coefficients take the values
\begin{equation}\label{fp}
f_{-q}(\lam) \equiv f_{-q} = (-1)^{q(r-1)}\prod_{k=0}^{r-1}a_k^{(-q)},\qquad
f_p(\lam) \equiv f_p = (-1)^{p(r-1)}\prod_{k=0}^{r-1}a_k^{(p)},
\end{equation}
so hypothesis H1 holds true. For general $k$, the degree of the polynomial
$f_k(\lam)$ is bounded by
\begin{equation}\label{fk:degree1}
\deg f_k \leq \left\{\begin{array}{ll}\frac{q+k}{q}r,& \qquad \textrm{for }k=-q,\ldots,0,\\
\frac{p-k}{p}r,& \qquad \textrm{for }k=0,\ldots,p.\end{array}\right.
\end{equation}
\end{proposition}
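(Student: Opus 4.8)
The plan is to work throughout with the representation \eqref{def:algcurve:sym}, i.e. with $F(z,\lam):=f(z^r,\lam)=\det\bigl(-\lam I_r+\sum_{k=-q}^{p} z^k D_k\widetilde S^k\bigr)$, where $D_k:=\diag(a_0^{(k)},\ldots,a_{r-1}^{(k)})$ and $\widetilde S$ is as in \eqref{def:Sshiftmx:bis}. First I would expand this determinant by the Leibniz formula over permutations $\pi\in S_r$. The matrix inside the determinant has $(i,j)$-entry equal to $-\lam\delta_{ij}$ plus a sum of monomials coming from those $k$ for which $\widetilde S^k$ has a $1$ in position $(i,j)$; since $\widetilde S$ is the cyclic shift, $(\widetilde S^k)_{ij}=1$ iff $j\equiv i+k\pmod r$, so each off-diagonal ``slot'' $(i,j)$ contributes a Laurent polynomial in $z$ whose exponents all lie in a fixed residue class mod $r$ and in the window $[-q,p]$. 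Each term in the Leibniz expansion is $\pm\prod_i (\text{entry in row }i)$; picking the diagonal entry $-\lam$ contributes a factor of $\lam$, while picking an off-diagonal monomial contributes a fixed power of $z$. Collecting, one sees that $F(z,\lam)$ is a Laurent polynomial in $z$ with all exponents divisible by $r$ up to a bounded shift — more precisely, grouping the $z$-exponents this way is exactly what lets one rewrite $F(z,\lam)$ as $f(z^r,\lam)$ with $f$ of the form \eqref{f:series}, and shows the $f_k$ are genuine polynomials (not Laurent polynomials) in $\lam$, since $-\lam$ occurs only with nonnegative powers.

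Next, for the outermost coefficients: the lowest-degree term in $z$ can only come from choosing, in \emph{every} row, the monomial with the most negative exponent available, which forces $k=-q$ in each row. Because $a_i^{(-q)}\neq0$ for all $i$ by \eqref{convention:nonzero}, the only permutation that can realize ``$k=-q$ in every row'' is the cyclic permutation $\pi$ with $\pi(i)=i-q\pmod r$ (a single permutation, since $\widetilde S^{-q}$ has exactly one nonzero entry per row); its sign is $\operatorname{sgn}(\pi)=(-1)^{q(r-1)}$ because $\widetilde S$ is a single $r$-cycle, so $\widetilde S^{-q}$ acts as $\gcd$-many cycles and the standard sign computation for a power of an $r$-cycle gives $(-1)^{q(r-1)}$ regardless of $\gcd(q,r)$ (this can be checked directly: $\operatorname{sgn}(\widetilde S)=(-1)^{r-1}$, hence $\operatorname{sgn}(\widetilde S^{-q})=(-1)^{q(r-1)}$). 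The corresponding product of matrix entries is $\prod_{k=0}^{r-1}a_k^{(-q)}$, and no $\lam$ appears, giving $f_{-q}(\lam)\equiv(-1)^{q(r-1)}\prod_{k=0}^{r-1}a_k^{(-q)}\neq0$. The top coefficient $f_p$ is handled symmetrically, using $a_i^{(p)}\neq0$. Since $f_{-q}$ and $f_p$ are nonzero constants, the set $\Lam$ in \eqref{def:laml} is empty, which is hypothesis H1.

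Finally, for the degree bound \eqref{fk:degree1}: I would count, for a fixed total $z$-exponent $kr$ (in the variable $z$, i.e. $f_k(\lam)$ collects the coefficient of $z^{kr}$ in $F$), how many rows in a given Leibniz term can contribute the diagonal factor $-\lam$ rather than an off-diagonal monomial. If $t$ rows contribute $-\lam$, the remaining $r-t$ rows contribute off-diagonal monomials with exponents in $[-q,p]$ summing to $kr$; the off-diagonal exponents are all $\le p$ and, crucially, a row contributing an off-diagonal monomial of exponent $e$ has $e$ constrained to a nonzero residue class mod $r$ relative to its column shift, but more simply each such exponent satisfies $-q\le e\le p$. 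For the coefficient of $z^{kr}$ to be nonzero with $t$ diagonal picks we need the $r-t$ off-diagonal exponents to sum to $kr$; since each is $\le p$, we get $kr\le (r-t)p$, i.e. $t\le \frac{(p-k)r}{p}$, which bounds $\deg_\lam f_k\le t\le \frac{p-k}{p}r$ for $k\ge 0$; and since each off-diagonal exponent is $\ge -q$, we get $kr\ge -(r-t)q$, i.e. $t\le\frac{(q+k)r}{q}$, giving the bound $\deg f_k\le\frac{q+k}{q}r$ for $k\le 0$. (One takes whichever bound is relevant in the overlapping range near $k=0$.)

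The main obstacle I anticipate is the bookkeeping in the Leibniz expansion — in particular, being careful that the $z$-exponents genuinely organize into a Laurent polynomial in $z^r$ (so that $f_k$ is well-defined and the indexing matches \eqref{f:series}), and pinning down the sign $(-1)^{q(r-1)}$ uniformly in $\gcd(q,r)$. Everything else is a counting argument; the degree bound in particular is just the observation that each of the $r$ factors in a Leibniz term contributes either one power of $\lam$ (from the diagonal) or a bounded power of $z$ (from an off-diagonal entry), and the two linear constraints ``exponents $\le p$'' and ``exponents $\ge -q$'' translate directly into the two cases of \eqref{fk:degree1}.
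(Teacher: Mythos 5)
Your argument is correct, and it is precisely the ``combinatorial argument in the style of [DK, Proof of Prop.~2.5]'' that the paper alludes to but does not spell out. The paper instead defers to Example~\ref{example:regularsj} as a ``simpler proof'': there one shows from \eqref{def:sj} and \eqref{def:algcurve:sym} that the Puiseux exponents of the roots are $s_j=-r/q$ (with multiplicity $q$) and $s_j=r/p$ (with multiplicity $p$), whence the Newton-polygon identity \eqref{Newton:polygon} from Lemma~\ref{lemma:sj} gives $m_k=\min\bigl(\frac{q+k}{q}r,\frac{p-k}{p}r\bigr)$, and $\deg f_k\le m_k$ is \eqref{fk:degree1}. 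Your route is genuinely different: you prove \eqref{fk:degree1} directly by counting in the Leibniz expansion how many rows can supply a factor of $\lam$ subject to the two linear constraints on the $z$-exponents, with no recourse to Puiseux series or Lemma~\ref{lemma:sj}. The paper's route is shorter because it reuses the already-developed Newton-polygon machinery; yours is more elementary and self-contained, and as a by-product it establishes $m_k=\min\bigl(\frac{q+k}{q}r,\frac{p-k}{p}r\bigr)$ without invoking Lemma~\ref{lemma:sj}, which the paper needs Example~\ref{example:regularsj} for. Both are legitimate, and the paper explicitly acknowledges the combinatorial option.

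One small inaccuracy worth flagging: with the convention \eqref{def:Sshiftmx:bis} one has $(\widetilde S^k)_{ij}=1$ iff $j\equiv i-k\pmod r$, not $j\equiv i+k$, so the permutation realizing ``$k=-q$ in every row'' is $\pi(i)\equiv i+q\pmod r$, not $\pi(i)\equiv i-q$. This does not affect your sign computation, since $\operatorname{sgn}(\widetilde S^{q})=\operatorname{sgn}(\widetilde S^{-q})=(-1)^{q(r-1)}$, nor the exponent bookkeeping, so the conclusion stands.
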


\begin{proof}
Equations \eqref{f:series}--\eqref{fp} follow immediately from
\eqref{def:algcurve:sym}. To prove \eqref{fk:degree1} one can use a
combinatorial argument in the style of \cite[Proof of Prop.~2.5]{DK}; a simpler
proof will be obtained in Example~\ref{example:regularsj} below.
\end{proof}

\begin{corollary} Under the conditions of Prop.~\ref{prop:fstructure}, we
have that
\begin{equation}
\label{mk:Duits:bis} (m_k)_{k=-q}^p =(\til m_k)_{k=-q}^p= \left(0,\frac
rq,\frac{2r}{q},\ldots,r,\ldots,\frac{2r}{p} ,\frac rp,0\right).
\end{equation}
So the total masses $\til m_k$ of the measures $\mu_k$ form a simple arithmetic
series in the same way as in the scalar Toeplitz case, see
Example~\ref{example:mk1}. The energy functional of the equilibrium problem
reduces to \eqref{energyfunctional:H1}.
\end{corollary}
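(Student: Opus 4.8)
The plan is to verify the formula \eqref{mk:Duits:bis} for $(m_k)_{k=-q}^p$ by feeding the degree bound \eqref{fk:degree1} of Proposition~\ref{prop:fstructure} into Definition~\ref{def:mk}, and then to observe that H1 forces all the $m_k^{(l)}$-corrections to vanish so that $\til m_k = m_k$.

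First I would deal with the right-hand side of \eqref{fk:degree1}. Consider the function $\ell\colon\{-q,\ldots,p\}\to\mathbb{R}$ defined by $\ell_k = \frac{q+k}{q}r$ for $k\le 0$ and $\ell_k = \frac{p-k}{p}r$ for $k\ge 0$ (the two expressions agree at $k=0$, both giving $r$). This $\ell$ is piecewise linear with a single breakpoint at $k=0$, going up with slope $r/q$ on $[-q,0]$ and down with slope $r/p$ on $[0,p]$, hence it is concave on $\{-q,\ldots,p\}$. By \eqref{fk:degree1} we have $\ell_k \ge \deg f_k$ for every $k$, so by the defining minimality property in Definition~\ref{def:mk} we get $m_k \le \ell_k$. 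For the reverse inequality I would use the outermost coefficients: by \eqref{fp}, $f_{-q}$ and $f_p$ are nonzero constants, so $\deg f_{-q} = \deg f_p = 0$, and also $\deg f_0 = r$ by \eqref{f:expansionlargelambda}. Taking $i=-q$, $j=0$ in the max defining $m_k$ gives $m_k \ge \frac{0-k}{0-(-q)}\cdot 0 + \frac{k+q}{q}\cdot r = \frac{q+k}{q}r = \ell_k$ for $-q\le k\le 0$, and taking $i=0$, $j=p$ gives $m_k \ge \frac{p-k}{p}r = \ell_k$ for $0\le k\le p$. Hence $m_k = \ell_k$, which is exactly the stated tuple $\left(0,\frac rq,\ldots,r,\ldots,\frac rp,0\right)$.

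Next, since Proposition~\ref{prop:fstructure} asserts that hypothesis H1 holds, the set $\Lam$ in \eqref{def:laml} is empty, so $L=0$ and there are no numbers $m_k^{(l)}$ at all; by Definition~\ref{def:tilmk} the sum $m_k^{(1)}+\cdots+m_k^{(L)}$ is an empty sum, hence $\til m_k = m_k$ for all $k$. This gives the first equality in \eqref{mk:Duits:bis}. The comparison with Example~\ref{example:mk1} is then immediate: that example computed $(m_k) = (0,\frac1q,\ldots,1,\ldots,\frac1p,0)$ for the scalar Toeplitz case where $\deg f_0 = 1$; here everything is simply scaled by the factor $r$ because $\deg f_0 = r$, so the masses again form a two-sided arithmetic progression rising from $0$ to $r$ in steps of $r/q$ and descending back to $0$ in steps of $r/p$. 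Finally, since H1 holds, the external-field term in the energy functional \eqref{energyfunctional} is absent (again because $L=0$), so $J(\vec\nu)$ reduces to the Nikishin-type functional \eqref{energyfunctional:H1}, which is the last claim of the corollary.

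There is essentially no hard obstacle here; the only point requiring a little care is making sure the concave upper bound $\ell_k$ is actually \emph{attained}, i.e.\ that $m_k$ is not strictly smaller than $\ell_k$. That is where one genuinely needs the exact values $\deg f_{\pm q}=0$ and $\deg f_0 = r$ from \eqref{fp} and \eqref{f:expansionlargelambda}, rather than just the inequality \eqref{fk:degree1}; with those three exact values in hand the two linear pieces of $\ell$ are forced, and concavity does the rest. One should also note in passing that \eqref{fk:degree1} together with these exact endpoint values shows $\deg f_k = m_k$ need not hold for intermediate $k$, but that is irrelevant: Definition~\ref{def:mk} only uses $\deg f_k$ through the upper-bound and interpolation relations, both of which we have pinned down.
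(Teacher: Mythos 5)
Your proof is correct, but it takes a genuinely different route from the one the paper intends. The paper never gives a standalone proof of this corollary; the intended justification is Example~\ref{example:regularsj}, which computes the Puiseux exponents $s_j$ directly from \eqref{def:algcurve:sym} (finding $s_j=-r/q$ for $j\le q$, $s_j=r/p$ for $j>q$) and then applies the Newton-polygon relation of Lemma~\ref{lemma:sj}, $\sum_{j\le q+k}s_j=\deg f_{-q}-m_k$ with $\deg f_{-q}=0$, to read off $m_k$. Your route instead feeds the degree bound \eqref{fk:degree1} into the concave-envelope Definition~\ref{def:mk}: you show the piecewise linear $\ell_k$ is a concave majorant of $\deg f_k$ (so $m_k\le\ell_k$), and you pin the envelope down from below at the three nodes $k=-q,0,p$ where the exact degrees $0,r,0$ are known (so $m_k\ge\ell_k$). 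Both arguments are clean; the main difference is logical order. The paper's route is self-contained and in fact produces \eqref{fk:degree1} as a \emph{consequence} of \eqref{mk:Duits:bis}, whereas yours uses \eqref{fk:degree1} as an input, so it only avoids circularity because the paper asserts an independent combinatorial proof of \eqref{fk:degree1} (the one ``in the style of [DK, Prop.~2.5]''). Note also that the ``simpler proof'' of \eqref{fk:degree1} that the paper actually points to \emph{is} the $s_j$ computation of Example~\ref{example:regularsj} — the very calculation that gives the corollary directly — so if one adopted that derivation of \eqref{fk:degree1}, re-deriving $m_k$ from it would be redundant. Your handling of $\til m_k=m_k$ via H1 and the collapse of \eqref{energyfunctional} to \eqref{energyfunctional:H1} when $L=0$ is correct and matches the paper.
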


\section{Proofs}
\label{section:proofs}

In this section we prove our main results.

\subsection{Some preliminaries}

First we single out some preliminaries which will be repeatedly used in the
proofs.

\subsubsection*{Asymptotics of the roots $z_j(\lam)$}

Consider an algebraic curve $f(z,\lam)=0$ as in \eqref{f:expansion:arb} and
define the roots $z_j(\lam)$ as in \eqref{ordering:roots} and curves $\Gamma_k$
as in \eqref{def:Gammak:mod}. Let $j\in\{1,\ldots,p+q\}$ be fixed. It is
well-known that there exist constants $s_j\in\er$, $c_j\in\cee\setminus\{0\}$
and $\kappa_j\in\enn$ such that
\begin{equation}\label{def:sj}z_j(\lam) = c_j\lam^{s_j}\left(1+O\left(\lam^{-1/\kappa_j}\right)\right),\end{equation}
as $\lam\to\infty$ with $\lam\in\cee\setminus\bigcup_k \Gamma_k$, with possibly
a different constant $c_j$ for each connected component of
$\cee\setminus\bigcup_k \Gamma_k$ in which we let $\lam\to\infty$. Obviously,
\begin{equation}\label{ordering:sj}s_1\leq \ldots\leq s_{p+q},\end{equation} because of the ordering
\eqref{ordering:roots} of the roots $z_j(\lam)$.

The expansion \eqref{def:sj} is an instance of a \emph{Puiseux series} and the
next lemma is a well-known result for the Newton polygon. We include the proof
for completeness.

\begin{lemma}\label{lemma:sj} The numbers $s_j$ in \eqref{def:sj} are such that
\begin{equation}\label{Newton:polygon}
\sum_{j=1}^{q+k}s_j = \deg (f_{-q})-m_k,\end{equation} for any $k=-q,\ldots,p$,
with $m_k$ as in Definition~\ref{def:mk}.\end{lemma}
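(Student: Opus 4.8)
The statement $\sum_{j=1}^{q+k}s_j = \deg f_{-q} - m_k$ is the classical correspondence between the Newton polygon of $z^qf(z,\lambda)$ (as a polynomial in $z$ with coefficients in $\lambda$) and the leading-order exponents of its roots as $\lambda\to\infty$. My plan is to extract these exponents directly from the Newton polygon and then sum them. Consider $g(z,\lambda) := z^q f(z,\lambda) = \sum_{k=-q}^{p} f_k(\lambda) z^{k+q}$, a polynomial of degree $p+q$ in $z$ (for $\lambda$ large, since $f_p\not\equiv 0$). To each monomial $f_{k}(\lambda)z^{k+q}$ associate the lattice point $(k+q,\deg f_k)$ in the plane, and form the \emph{upper} Newton polygon: the boundary of the convex hull, taking the upper hull since we are interested in the behaviour as $\lambda\to\infty$ (largest powers of $\lambda$ dominate). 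The key observation is that, by Definition~\ref{def:mk}, the function $i\mapsto m_{i-q}$ (reindexing so the abscissa is $i=k+q\in\{0,\ldots,p+q\}$) is exactly the piecewise-linear concave majorant of $i\mapsto \deg f_{i-q}$, i.e. its graph \emph{is} the upper Newton polygon of $g$.

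The standard Newton–Puiseux argument then says: if the upper hull has a linear piece of slope $-\sigma$ over the interval of abscissas $[i_1,i_2]$, then $g(z,\lambda)=0$ has exactly $i_2-i_1$ roots (counted with multiplicity) behaving like $z_j(\lambda)\sim c_j\lambda^{\sigma}$ as $\lambda\to\infty$; this is seen by the balancing argument, substituting $z=c\lambda^{\sigma}w$ and checking which pairs of monomials can balance at top order. Thus the multiset of exponents $\{s_j\}_{j=1}^{p+q}$ is read off from the slopes of the successive edges of the upper Newton polygon, with multiplicities equal to the horizontal lengths of the edges; and because the edge slopes are (weakly) decreasing as the abscissa increases — concavity of $m$ — this labelling is automatically consistent with the ordering $s_1\le\cdots\le s_{p+q}$ of~\eqref{ordering:sj}, noting that $|z_j(\lambda)|$ is ordered by the size of $s_j$ for $\lambda$ large. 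In particular $z_j(\lambda) = c_j\lambda^{s_j}(1+O(\lambda^{-1/\kappa_j}))$ with $s_j$ the slope of the edge over abscissa $[j-1,j]$, which is precisely $m_{j-q} - m_{j-q-1}$ with the sign and indexing unwound: the slope of the segment of the Newton polygon from $(j-1,m_{j-1-q})$ to $(j,m_{j-q})$ is $m_{j-q}-m_{j-q-1}$, and the Puiseux exponent is the \emph{negative} of this when one sets up the hull for $\lambda\to\infty$ — here one must be careful with the sign convention; the cleanest way is to say that the roots with large modulus correspond to the portion of the hull of \emph{small} slope, so that $s_j = -(m_{j-q}-m_{j-q-1}) = m_{j-q-1}-m_{j-q}$. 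Let me instead phrase the summation to sidestep an explicit per-$j$ sign check.

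To finish: telescoping,
\[
\sum_{j=1}^{q+k} s_j = \sum_{j=1}^{q+k}\bigl(m_{j-q-1}-m_{j-q}\bigr) = m_{-q} - m_{k} = \deg(f_{-q}) - m_k,
\]
using that $m_{-q} = \deg f_{-q}$ (the left endpoint of the Newton polygon sits on the point $(0,\deg f_{-q})$, since $f_{-q}\not\equiv 0$ forces that vertex to be a vertex of the hull — here we use $\deg f_{-q}\le r$ and, more to the point, that no other point $(i,\deg f_{i-q})$ with $i>0$ lies above it unless it is a genuine hull vertex, which is automatic from the definition of $m$ as the smallest concave majorant). This is the claimed identity. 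The one place where I should double-check the orientation is whether the $q+k$ roots of \emph{smallest} modulus (those grouped on the left in the ordering~\eqref{ordering:roots}) correspond to the \emph{first} $q+k$ edges of the hull read from abscissa $0$; since $|z_j(\lambda)| = |c_j||\lambda|^{s_j}(1+o(1))$ and smaller $s_j$ means smaller modulus for large $\lambda$, and the hull edges have decreasing slope, I need the exponents $s_j$ to be \emph{increasing} in $j$, which matches~\eqref{ordering:sj}; so the sign convention must be exactly the one making edge slopes $-s_j$ (decreasing $\Leftrightarrow$ $s_j$ increasing). That consistency check — matching the combinatorial ordering of Newton-polygon edges to the modulus-ordering~\eqref{ordering:roots} of the roots — is the only genuinely delicate point; everything else is the textbook Newton–Puiseux dictionary plus a telescoping sum.

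**Main obstacle.** I expect the only real subtlety is bookkeeping: correctly aligning (i) the abscissa shift $k\mapsto k+q$, (ii) the sign in "slope of Newton polygon edge" versus "Puiseux exponent $s_j$", and (iii) the root-ordering convention~\eqref{ordering:roots} used throughout the paper. Once those three conventions are pinned down consistently — which I would do by verifying the extreme cases $k=-q$ (empty sum, $m_{-q}=\deg f_{-q}$, giving $0$) and $k=p$ ($\sum_{j=1}^{p+q}s_j = \deg f_{-q}-m_p = \deg f_{-q}-\deg f_p$, which is the valuation-minus-leading-degree identity $\prod_j z_j = (-1)^{p+q}f_{-q}/f_p$ at leading order in $\lambda$) — the telescoping argument closes immediately. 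No hard analysis is needed beyond the existence of the Puiseux expansions~\eqref{def:sj}, which is quoted as known.
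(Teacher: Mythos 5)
Your proposal is correct, and it takes a genuinely different route from the paper's proof. You invoke the Newton--Puiseux dictionary as a black box (each edge of the upper Newton polygon of $z^q f(z,\lambda)$ supplies roots with Puiseux exponent equal to minus the edge slope), identify the polygon's upper boundary with the graph of $m_k$ using Definition~\ref{def:mk}, check the sign and ordering conventions so that increasing $j$ matches increasing $s_j$, and then telescope $\sum_{j=1}^{q+k}(m_{j-1-q}-m_{j-q}) = m_{-q}-m_k = \deg f_{-q}-m_k$. The paper instead proves the Newton--Puiseux correspondence from scratch in this situation: starting from the Vieta factorization $f(z,\lambda) = z^{-q}f_p(\lambda)\prod_{j}(z-z_j(\lambda))$, it writes each $f_k$ as a sum over size-$(p-k)$ subsets of products of roots, bounds the growth in $\lambda$ to get $\widehat m_k := \deg f_{-q}-\sum_{j\le q+k}s_j \ge \deg f_k$ with equality whenever $s_{q+k}<s_{q+k+1}$, and then argues $\widehat m_k = m_k$ by matching concavity and slope-changes. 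What your approach buys is brevity and a cleaner telescoping identity; what the paper's approach buys is self-containment (it does not need to quote the Newton--Puiseux theorem) and the explicit inequality \eqref{inequalityproof1}--\eqref{inequalityproof3}, which the paper reuses elsewhere (e.g.\ in Example~\ref{example:regularsj} and in the analogous discussion near $\lambda=\lambda_l$ giving \eqref{Newton:polygon:l}). The consistency checks you flag as the one delicate point --- abscissa shift, sign of the slope, and alignment with the modulus-ordering \eqref{ordering:roots} --- are exactly where the care is needed, and your endpoint sanity checks at $k=-q$ and $k=p$ resolve them correctly.
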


\begin{proof}
We start from the factorization
\begin{equation*}
f(z,\lam) = \frac{f_p(\lam)}{z^q}\prod_{j=1}^{p+q}(z-z_j(\lam)).
\end{equation*}
By expanding this product in powers of $z$, we see that the coefficient
$f_k(\lam)$ in \eqref{f:expansion:arb} is given by
\begin{equation}\label{def:ak} f_k(\lam) = (-1)^{p-k}f_p(\lam) \sum_{S}
\prod_{j\in S} z_j(\lam),
\end{equation}
where the summation runs over all subsets $S\subset\{1,\ldots,p+q\}$ with $|S|
= p-k$, for any $k\in\{-q,\ldots,p\}$. Then we obtain
\begin{equation}\label{estimate:fk}
f_k(\lam)/f_p(\lam) = O\left(\sum_{S}|\lam|^{\sum_{j\in S} s_j}\right) =
O\left(|\lam|^{\sum_{j\in S_k} s_j}\right),\qquad \lam\to\infty,
\end{equation}
for any $k\in\{-q,\ldots,p\}$, where in the second step we define
$S_k:=\{q+k+1,\ldots,p+q\}$. Hence
\begin{equation}\label{inequalityproof1} \sum_{j=q+k+1}^{p+q} s_j \geq
\deg f_k - \deg f_p.
\end{equation}
Moreover if $s_{q+k}<s_{q+k+1}$ then equality must hold in
\eqref{inequalityproof1}, since in that case $S = S_k$ yields the unique
dominant summand in the middle term of \eqref{estimate:fk}. In particular this
holds for $k=-q$:
\begin{equation}\label{sj:sumiszero} \sum_{j=1}^{p+q} s_j = \deg
f_{-q}-\deg f_p.
\end{equation}
By subtracting \eqref{sj:sumiszero} from \eqref{inequalityproof1} we then get
\begin{equation}\label{inequalityproof3}
\deg f_{-q}-\sum_{j=1}^{q+k}s_j \geq \deg f_k,
\end{equation}
with equality if $s_{q+k}<s_{q+k+1}$.

Denote by $\widehat m_k$ the left hand side of \eqref{inequalityproof3}. Then
$k\mapsto \widehat m_k$ is a concave function on $\{-q,\ldots,p\}$ by virtue of
\eqref{ordering:sj}. From \eqref{inequalityproof3} we see that $\widehat
m_k\geq \deg f_k$, with equality for each $k$ for which $2\widehat m_k>\widehat
m_{k-1}+\widehat m_{k+1}$, i.e., for each $k$ for which the concave, piecewise
linear function that interpolates between the grid points $(k,\widehat m_k)$
changes slope. Then Definition~\ref{def:mk} implies that $\widehat m_k=m_k$,
which is \eqref{Newton:polygon}.\end{proof}

\begin{corollary}\label{corollary:zjinf:brute}
Under the assumption \eqref{f:expansionlargelambda} we have that
\begin{equation}\label{zj:infty:brute}
\left\{\begin{array}{ll}  z_j(\lam)\to 0, &\quad j=1,\ldots,q,\\
z_j(\lam)\to\infty, &\quad j=q+1,\ldots,p+q,\end{array}\right.
\end{equation}
as $\lam\to\infty$ with $\lam\in\cee\setminus\bigcup_k \Gamma_k$. In
particular, the set $\Gamma_0$ in \eqref{def:Gammak:mod} (or
\eqref{def:Gammak}) is compact.
\end{corollary}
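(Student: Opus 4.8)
The plan is to read off the signs of the Puiseux exponents $s_j$ in \eqref{def:sj} from the Newton-polygon identity of Lemma~\ref{lemma:sj}, exploiting that under \eqref{f:expansionlargelambda} the degree sequence $(\deg f_k)_{k=-q}^{p}$ attains its maximum $r$ \emph{strictly} and only at $k=0$ (note $-q,p\neq 0$ since $p,q\geq 1$, so $\deg f_{-q},\deg f_p\leq r-1$). First I would telescope \eqref{Newton:polygon}: setting $m_{-q}=\deg f_{-q}$ and subtracting consecutive instances gives $s_{q+k}=m_{k-1}-m_k$ for $k=-q+1,\ldots,p$, i.e.\ $s_j=m_{j-q-1}-m_{j-q}$ for $j=1,\ldots,p+q$. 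Since $k\mapsto m_k$ is concave, the increments $d_k:=m_k-m_{k-1}$ are non-increasing, so to obtain $s_j<0$ for $j=1,\ldots,q$ and $s_j>0$ for $j=q+1,\ldots,p+q$ it suffices to prove $d_0>0$ and $d_1<0$, i.e.\ $m_{-1}<m_0$ and $m_1<m_0$.

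Next I would compute from Definition~\ref{def:mk} that $m_0=r$ and $m_{\pm 1}<r$. For $m_0$: the interpolation bound with the endpoint $i=0$ (or $j=0$) already yields $m_0\geq\deg f_0=r$, while every admissible pair $(i,j)$ with $i\leq 0\leq j$ gives either a value $\leq r-1$ or a convex combination of $r$ with some $\deg f_j\leq r-1$ (resp.\ $\deg f_i\leq r-1$), hence $\leq r$; so $m_0=r$. For $m_{\pm1}$ the same short estimate applies, but now any pair producing a value built from $\deg f_0=r$ must place a \emph{strictly positive} weight on a degree $\leq r-1$ (the defining constraint forbids $i=k=j$), so the value is strictly $<r$. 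Hence $d_0=r-m_{-1}>0$ and $d_1=m_1-r<0$, and then \eqref{def:sj} gives $z_j(\lam)=c_j\lam^{s_j}(1+o(1))\to 0$ for $j\leq q$ and $\to\infty$ for $j>q$, which is \eqref{zj:infty:brute}.

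For the compactness of $\Gamma_0$ I would argue that it is closed (Lemma~\ref{lemma:structureGammak}, resp.\ Lemma~\ref{lemma:strucwk}) and bounded. Boundedness requires \eqref{zj:infty:brute} to persist as $|\lam|\to\infty$ along \emph{arbitrary} directions, not merely inside the components of $\cee\setminus\bigcup_k\Gamma_k$ where \eqref{def:sj} is stated. This follows by an elementary perturbation remark: dividing the polynomial $z^qf(z,\lam)=\sum_m f_{m-q}(\lam)z^m$ by $f_0(\lam)$ produces a polynomial whose $z^q$-coefficient is $1$ and all of whose other coefficients tend to $0$ as $|\lam|\to\infty$ (because $\deg f_k<\deg f_0$ for $k\neq 0$); by continuity of roots, exactly $q$ of the $p+q$ roots then remain bounded and converge to $0$ while the remaining $p$ diverge to $\infty$. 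In particular $|z_q(\lam)|<|z_{q+1}(\lam)|$ once $|\lam|$ is large, so no such $\lam$ lies on $\Gamma_0$, and $\Gamma_0$ is compact.

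The main obstacle is the combinatorial step in the second paragraph — that $k\mapsto m_k$ is strictly unimodal with peak exactly at $k=0$; once this is established, Lemma~\ref{lemma:sj} together with \eqref{def:sj} does the rest mechanically, the only point needing care being the uniform-in-direction upgrade of \eqref{zj:infty:brute} used for the compactness of $\Gamma_0$.
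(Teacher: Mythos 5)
Your proof is correct and follows essentially the same route as the paper: telescope the Newton-polygon identity from Lemma~\ref{lemma:sj} to turn the strict unimodality of $k\mapsto m_k$ (peak at $k=0$, forced by \eqref{f:expansionlargelambda}) into the sign pattern of the $s_j$, then read off \eqref{zj:infty:brute} from \eqref{def:sj}. Your verification that $m_0=r$ while $m_{\pm1}<r$, combined with concavity, is precisely the content behind the paper's one-line assertion that $m_k$ is strictly increasing on $\{-q,\ldots,0\}$ and strictly decreasing on $\{0,\ldots,p\}$. The one place you genuinely go beyond the paper is the final perturbation remark (dividing $z^qf(z,\lam)$ by $f_0(\lam)$ and using continuity of roots) to get the root separation $|z_q(\lam)|<|z_{q+1}(\lam)|$ for all large $|\lam|$ rather than only for $\lam\in\cee\setminus\bigcup_k\Gamma_k$; the paper dispatches the compactness of $\Gamma_0$ with ``the result then follows from \eqref{def:sj},'' leaving this point implicit, so your explicit argument is a welcome tightening rather than a departure.
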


\begin{proof}
The assumption \eqref{f:expansionlargelambda} implies that $k\mapsto m_k$ is a
strictly increasing function on $\{-q,\ldots,0\}$ and strictly decreasing on
$\{0,\ldots,p\}$. Thus \eqref{Newton:polygon} implies that $s_j<0$ for
$j=1,\ldots,q$ and $s_j>0$ for $j=q+1,\ldots,p+q$. The result then follows from
\eqref{def:sj}.
\end{proof}

\begin{example}\label{example:regularsj} Let $f(z,\lam)=0$ be an algebraic curve as in
\eqref{def:algcurve:sym} and \eqref{convention:nonzero}. Then
\begin{equation}\label{sj:regular}
s_j=\left\{\begin{array}{ll}  -r/q, &\quad j=1,\ldots,q,\\
\ \ r/p, &\quad j=q+1,\ldots,p+q.\end{array}\right.
\end{equation}
Indeed, by virtue of \eqref{def:sj} and \eqref{def:algcurve:sym} we find that
$$0=f(z_j(\lam),\lam)=\left((-1)^r\lam^r+c_1\lam^{-qs_{j}}+c_2\lam^{ps_j}\right)(1+o(1)),\qquad \lam\to\infty,$$
for certain non-zero constants $c_1,c_2$. For this expression to be zero for
large $\lam$ we must have that two out of the three exponents
$\{r,-qs_j,ps_j\}$ are equal and the third is smaller; this implies that either
$s_j=-r/q$ or $s_j=r/p$, for all $j=1,\ldots,p+q$. The fact that $s_j=-r/q$
occurs with multiplicity $q$ and $s_j=r/p$ occurs with multiplicity $p$, is
then a consequence of the relation $\sum_{j=1}^{p+q} s_j=0$, recall
\eqref{sj:sumiszero} and \eqref{fp}. Finally, we note that \eqref{sj:regular}
and \eqref{Newton:polygon} imply \eqref{mk:Duits:bis}, which in turn leads to
\eqref{fk:degree1}.
\end{example}

Similarly to the above discussion, for any $l=1,\ldots,L$ there exist constants
$s_j^{(l)}\in\er$, $c_j^{(l)}\in\cee\setminus\{0\}$ and $\kappa_j^{(l)}\in\enn$
such that
\begin{equation}\label{def:sj:l}z_j(\lam) = c_j^{(l)}(\lam-\lam_l)^{s_j^{(l)}}
+O\left((\lam-\lam_l)^{s_j^{(l)}+1/\kappa_j^{(l)}}\right),\end{equation} as
$\lam\to\lam_l$ with $\lam\in\cee\setminus\bigcup_k \Gamma_k$, with possibly a
different value of $c_j^{(l)}$ for each connected component of
$\cee\setminus\bigcup_k \Gamma_k$ in which we let $\lam\to\lam_l$. The numbers
$s_j^{(l)}$ are such that
\begin{equation}\label{Newton:polygon:l}
\sum_{j=1}^{q+k}s_j^{(l)} = \mult_{\lam-\lam_l}
(f_{-q})-m_k^{(l)},\end{equation} for any $k=-q,\ldots,p$ and $l=1,\ldots,L$.

\subsubsection*{Widom's determinant identity}

\begin{proposition}\label{proposition:Widomformula} (Widom's determinant identity). Let
$\lam\in\cee$ be such that the solutions $z_j(\lam)$ of the algebraic equation
$f(z,\lam)=0$ in \eqref{def:f} are pairwise distinct. Then for all $n$
sufficiently large we have
\begin{equation}\label{Widom:1} \det
T_n(A(z)-\lam I_r) = \sum_{S} C_{S}(\lam)(w_S(\lam))^{n+\alpha},
\end{equation}
where the sum is over all subsets $S\subset\{1,2,\ldots,p+q\}$ of cardinality
$|S| = q$ and for each such $S$ we have
\begin{equation}\label{def:wS} w_S(\lam) =
(-1)^q f_{-q}(\lam)\prod_{j\in S}z_j(\lam)^{-1}
\end{equation}
and
\begin{equation}\label{def:CS}
C_S(\lam) = \det\left( \frac{1}{2\pi i}\int_{\sigma_S} z^{\mu-\nu} (A(z)-\lam
I_r)^{-1}\frac{dz}{z} \right)_{\mu,\nu=1,\ldots,\alpha}
\end{equation}
where $\sigma_S$ is a counterclockwise oriented closed Jordan curve enclosing
$z=0$ and the points $z_j(\lam)$, $j\in S$, but no other roots of
$f(z,\lam)=0$.
\end{proposition}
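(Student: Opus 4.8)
The plan is to follow Widom's classical approach via Wiener--Hopf / spectral factorization of the symbol $A(z)-\lam I_r$, combined with a partial-fraction decomposition of the resolvent. First I would note that $\det T_n(A(z)-\lam I_r)$ is, up to a normalization, a Toeplitz determinant with a rational matrix symbol, and that $z^q f(z,\lam) = z^q\det(A(z)-\lam I_r)$ is (for $\lam\notin\Lam$, hence generically in $\lam$) a polynomial of degree $p+q$ in $z$ with simple roots $z_1(\lam),\dots,z_{p+q}(\lam)$. The key structural fact is that $\det(A(z)-\lam I_r)$ can be factored, over each choice of a $q$-element subset $S\subset\{1,\dots,p+q\}$, into a piece with zeros $\{z_j : j\in S\}$ inside a chosen contour and a piece with the remaining zeros outside; this is exactly the combinatorial freedom encoded by the sum over $S$ in \eqref{Widom:1}.

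The main steps, in order, are: (1) Set up the algebra of (block) Laurent polynomials and the Toeplitz operator $T_n$ acting on $(\cee^r)^n$; observe that $\det T_n(A(z)-\lam I_r)$ is a polynomial in $\lam$ and an entire function of the matrix entries, so it suffices to prove the identity for $\lam$ in a generic set (the $z_j(\lam)$ pairwise distinct, $f_{-q}(\lam)f_p(\lam)\neq 0$) and extend by continuity/analyticity in the end. (2) For each admissible $S$, perform a Wiener--Hopf-type factorization $A(z)-\lam I_r = A_+^{(S)}(z)\,D^{(S)}(z)\,A_-^{(S)}(z)$ relative to the contour $\sigma_S$, where $A_+^{(S)}$ (resp. $A_-^{(S)}$) is analytic and invertible inside (resp. outside, including $\infty$ in the appropriate sense) $\sigma_S$, and $D^{(S)}$ is a diagonal matrix of powers of $z$ carrying the partial winding; the zeros of $\det A_+^{(S)}$ are precisely $\{z_j(\lam):j\in S\}$. (3) Apply the Borodin--Okounkov / Geronimo--Case--type identity, or more directly Widom's own formula, which expresses $\det T_n$ of a factored symbol as $(\text{geometric factor})^{n+\alpha}$ times a Fredholm-type determinant; for a \emph{banded} (rational) symbol this Fredholm determinant truncates to the finite determinant $C_S(\lam)$ in \eqref{def:CS} of the Fourier-coefficient block matrix of $(A(z)-\lam I_r)^{-1}$ over $\sigma_S$, and the geometric factor is exactly $w_S(\lam)=(-1)^q f_{-q}(\lam)\prod_{j\in S}z_j(\lam)^{-1}$, which is (up to sign) the product of the zeros of $\det A_-^{(S)}$, equivalently $f_{-q}(\lam)/\big(f_p(\lam)\prod_{j\notin S}z_j(\lam)\big)$ rearranged via $f_{-q}=(-1)^q f_p\prod_j z_j$ from \eqref{def:ak}. (4) Sum over all $S$ with $|S|=q$: since the different factorizations correspond to the different ways of splitting the $p+q$ roots into $q$ ``inside'' and $p$ ``outside,'' and since $\det T_n$ is a single well-defined quantity, a residue/interpolation argument (expand $\det(A(z)-\lam I_r)^{-1}$ in partial fractions in $z$ across the roots $z_j$) shows that the contributions organize precisely into the stated sum. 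I would carry out step (4) by checking that both sides of \eqref{Widom:1}, viewed as functions of an auxiliary parameter (or directly as polynomials in $\lam$ of controlled degree, cf.\ Prop.~\ref{prop:ptmass:finiten}), agree: the right-hand side has the correct growth $w_S(\lam)^{n+\alpha}$ in each sector of $\cee\setminus\bigcup_k\Gamma_k$ governed by which $S$ dominates, and Widom's theorem \cite{Widom1} guarantees this matches $\det T_n$.

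The hard part will be step (2)--(3): making the partial factorization $A(z)-\lam I_r = A_+^{(S)} D^{(S)} A_-^{(S)}$ rigorous and identifying the resulting operator determinant with the \emph{finite} matrix $C_S(\lam)$ of size $r\alpha$. For a scalar banded symbol this is elementary linear algebra on the companion-type matrix, but in the block-banded case one must control the block structure of the Fourier coefficients of $(A(z)-\lam I_r)^{-1}$ and verify that only the first $\alpha$ ``negative'' block-coefficients (relative to $\sigma_S$) enter, so that the would-be infinite determinant collapses to size $r\alpha$. I expect that the cleanest route is to cite Widom's own derivation in \cite{Widom1} (the formula here is essentially his, with the explicit $C_S$ and $w_S$ spelled out), reducing the proof to: (i) quoting \cite[Theorem~6.1 and its proof]{Widom1} or the relevant determinant formula, (ii) matching notation, in particular verifying $w_S(\lam)=(-1)^q f_{-q}(\lam)\prod_{j\in S}z_j(\lam)^{-1}$ via \eqref{def:ak}, and (iii) noting the ``$n$ sufficiently large'' hypothesis is needed so that the size-$r\alpha$ truncation is valid and no lower-order terms in $n$ survive. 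Thus the proof is largely a careful bookkeeping of a known result rather than a new argument; I would present it as such, with the algebraic identity for $w_S$ as the one genuinely new computation.
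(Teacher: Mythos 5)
Your proposal ends in the same place as the paper: the paper offers no independent proof of Proposition~\ref{proposition:Widomformula} but simply cites Widom \cite[Section~6]{Widom1}, noting additionally that the ``$n$ sufficiently large'' threshold can be taken uniform in $\lam$ by \cite[Section~6, Remark~1]{Widom1}. In that sense your fallback position (quote Widom, match notation, verify the algebraic identity for $w_S$ via \eqref{def:ak}) is exactly what the paper does, and your computation $w_S(\lam)=(-1)^pf_p(\lam)\prod_{j\in\overline S}z_j(\lam)$ correctly reproduces the alternative form stated right after the proposition. The one substantive difference is in the sketched derivation: Widom's actual argument, as the paper records, proceeds through the Baxter--Schmidt formula \cite{BaxterSchmidt}, which collapses the $rn\times rn$ Toeplitz determinant of a symbol with winding to a fixed-size determinant of a transformed symbol; your route via a contour-dependent Wiener--Hopf factorization $A_+^{(S)}D^{(S)}A_-^{(S)}$ and a Borodin--Okounkov-type identity is a plausible modern alternative, but as written it is only a heuristic --- step (4), where the $\binom{p+q}{q}$ separate factorizations are claimed to ``organize precisely into the stated sum'' by a residue/interpolation argument, is precisely the point that would need a genuine proof, and you do not supply one. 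Since you explicitly defer to \cite{Widom1} for the rigorous argument, this is acceptable as a proof by citation, matching the paper; but one small caveat worth adding is the uniformity of $n_0$ in $\lam$, which the paper flags because it is needed later (e.g.\ in Section~\ref{subsection:prooftheorem:eigTn}) and which your write-up omits.
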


Note that \eqref{def:wS} can be written alternatively as
$$ w_S(\lam) = (-1)^p
f_{p}(\lam)\prod_{j\in \overline{S}}z_j(\lam),$$  with
$\overline{S}:=\{1,2,\ldots,p+q\}\setminus S$. This expression has maximal
modulus among the subsets $S$ of cardinality $|S|=q$ if $S=S_0$ with
\begin{equation}\label{def:S0}
S_0 = \{1,\ldots,q\}.
\end{equation}
For $S=S_0$ the definition of $C_{S_0}$ in \eqref{def:CS} reduces to the one of
$C_0$ in \eqref{def:C0}.

Prop.~\ref{proposition:Widomformula} was obtained in \cite[Section 6]{Widom1}
by means of the Baxter-Schmidt formula \cite{BaxterSchmidt}. Note that
Prop.~\ref{proposition:Widomformula} assumes that $n$ is sufficiently large,
say $n\geq n_0$, but this is no problem since \cite[Section 6, Remark
1]{Widom1} guarantees that the same value of $n_0$ works for all~$\lam$.

Prop.~\ref{proposition:Widomformula} assumes that the solutions of the
algebraic equation $f(z,\lam)=0$ are pairwise distinct. If this assumption
fails then similar determinant formulas can be obtained, by taking a suitable
limit of \eqref{Widom:1} and using continuity. This will be hinted at in
Section~\ref{subsection:prooftheorem:eigTn}.

For the scalar case $r=1$ it is known that
$$ C_S(\lam) = \prod_{j\in \overline{S}}z_j(\lam)^{q}\prod_{j\in
\overline{S},l\in S}(z_j(\lam)-z_l(\lam))^{-1},
$$
and then Prop.~\ref{proposition:Widomformula} reduces to a result in
\cite{Widom0}.

\subsection{Proof of Proposition~\ref{prop:twoconditions}}
\label{subsection:proof:proptwo}

\begin{proof}[Proof of Proposition~\ref{prop:twoconditions}(a)]
Suppose that $\cee\setminus\Gamma_0$ is connected and moreover $\Gamma_0$ does
not have any interior points, recalling \eqref{def:Gammak}. From
Lemma~\ref{lemma:structureGammak} we immediately obtain H2. Next we establish
H3. Eq.~\eqref{zj:infty:brute} implies that for $|\lam|$ large enough we can
take the contour $\sigma_0$ in \eqref{def:C0} to be the unit circle. Then we
easily find that $$\left( \frac{1}{2\pi i}\int_{\sigma_0} z^{\mu-\nu}
(A(z)-\lam I_r)^{-1}\frac{dz}{z} \right)_{\mu,\nu=1,\ldots,\alpha}=-\lam^{-1}
I_{r\alpha}(1+O(\lam^{-1})),\qquad \lam\to\infty,$$ and therefore
$C_0(\lam)=(-\lam^{-1})^{r\alpha}(1+O(\lam^{-1}))\neq 0$ as $\lam\to\infty$.
Hypothesis H3 then follows from the analyticity of $C_0(\lam)$ in
$\cee\setminus\Gamma_0$.
\end{proof}

\begin{proof}[Proof of Proposition~\ref{prop:twoconditions}(b)]
Assume that $T_n(A)$ is a Hessenberg matrix. Hence by definition, $T_n(A)$ has
the form \eqref{blockToeplitzmatrix:scalar} with $q=1$ and with superdiagonal
entries $a_i^{(-1)}\neq 0$ for all $i$. We will need some auxiliary lemmas.

By virtue of \eqref{f:series} (where now $q=1$) we see that the equation
$f(z,\lam)=0$ has $p+1$ roots
\begin{equation}\label{def:zj}z_j=z_j(\lam),\qquad j = 1,\ldots,p+1,\end{equation}
for a certain $p\in\enn$ (which is not necessarily the same $p$ as in
\eqref{blockToeplitzmatrix:scalar}). Basic algebraic geometry shows that one
can choose a finite union of analytic arcs $\Gamma\subset\cee$ so that the
roots $z_j(\lam)$, $j = 1,\ldots,p+1$ depend analytically on $\lam\in\cee$,
except when $\lam\in \Gamma$. We will see in a moment that we can define
$\Gamma$ by means of \eqref{def:Gammak}, and the $z_j(\lam)$ as in
\eqref{ordering:roots}; but we are \emph{not} making these assumptions yet.

We consider the \emph{Riemann surface} $\mathcal R$ associated to the algebraic
equation $f(z,\lam)=0$: it is a branched $(p+1)$-sheeted covering of
$\overline\cee$, with the analytic function $z_j(\lam)$ defined for $\lam$ on
the $j$th sheet $\overline\cee$, $j=1,\ldots,p+1$. These functions have a cut
along the appropriate arcs of $\Gamma$, and the different sheets of $\mathcal
R$ are glued together along these arcs.

\begin{lemma}\label{lemma:Rsurface} 
Let the roots $z_j(\lam)$ in \eqref{def:zj} and the Riemann surface $\mathcal
R$ be defined as in the previous paragraph. Then $\mathcal R$ is connected.
\end{lemma}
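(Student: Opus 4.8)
The plan is to prove connectedness of the Riemann surface $\mathcal R$ by showing that the monodromy group of the covering $\mathcal R \to \overline\cee$ acts transitively on the $p+1$ sheets, equivalently on the roots $z_1(\lam),\ldots,z_{p+1}(\lam)$. The Hessenberg hypothesis $q=1$ is crucial here: it means exactly one root, $z_1(\lam)$, tends to $0$ as $\lam\to\infty$ (by Corollary~\ref{corollary:zjinf:brute}), while $z_2(\lam),\ldots,z_{p+1}(\lam)$ all tend to $\infty$. So the ``small'' root is singled out asymptotically but, as we move $\lam$ around in $\cee$, the roots undergo permutations when $\lam$ crosses $\Gamma$, and we want to see that these permutations generate a transitive subgroup of the symmetric group on $p+1$ letters.

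First I would set up the local-at-infinity picture: by Example~\ref{example:regularsj} (or directly from \eqref{def:sj} with the Newton-polygon computation), near $\lam=\infty$ we have $z_1(\lam)=c_1\lam^{-1}(1+o(1))$ while the remaining $p$ roots behave like $z_j(\lam)\sim c_j\lam^{1/p}$ for $j=2,\ldots,p+1$ (this uses that, for a Hessenberg symbol, after clearing denominators $f(z,\lam)$ has degree $p$ in $\lam$ coming solely from the $z^0$ coefficient $f_0$, and the $p$ large roots are governed by the two-term balance $f_0(\lam)+f_p z^p\approx 0$). Hence looping once around $\lam=\infty$ cyclically permutes the $p$ large roots $z_2,\ldots,z_{p+1}$ and fixes $z_1$; that gives us a $p$-cycle in the monodromy group, so the $p$ large roots all lie on one connected component of $\mathcal R$, call it $\mathcal R'$, and $z_1$ lies on (at most) one further component.

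It then remains to show that $z_1$ is actually on the same component as the others, i.e. that some monodromy loop in $\cee$ (around a branch point of $\Gamma$) interchanges $z_1$ with one of $z_2,\ldots,z_{p+1}$. The cleanest way I see is by contradiction: if $\mathcal R$ were disconnected, then $z_1(\lam)$ would be a single-valued analytic function on all of $\overline\cee$ minus at most finitely many points, and since $z_1(\lam)\to 0$ as $\lam\to\infty$, it would extend to a rational function $z_1(\lam)$ on $\overline\cee$ with $z_1(\infty)=0$. Plugging this rational function into $f$, one finds $f(z,\lam)=(z-z_1(\lam))\,g(z,\lam)$ must factor over the field $\cee(\lam)$ — the minimal polynomial of $z_1$ over $\cee(\lam)$ would have degree $1$. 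But the constant term of $z^{q}f(z,\lam)=z f(z,\lam)$ in $z$ is $f_{-1}(\lam)=f_{-1}$, a \emph{nonzero constant} (by \eqref{fp}, since $q=1$ and the superdiagonal entries $a_i^{(-1)}$ are nonzero), which forces $\prod_{j=1}^{p+1} z_j(\lam)$ to be constant; combined with $z_1(\lam)\to 0$ and all other $z_j(\lam)\to\infty$, the product $z_1(\lam)\prod_{j\geq 2} z_j(\lam)$ cannot be a nonzero constant unless the pole of the product of the large roots at $\lam=\infty$ is exactly cancelled — but the large roots blow up like $\lam^{p/p}=\lam$ in product while $z_1\sim\lam^{-1}$, giving a net $\lam^0$, which is consistent, so this particular count is not by itself a contradiction and I must instead use irreducibility directly.

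So the main obstacle, and the step I would spend the most care on, is establishing that the Laurent polynomial $zf(z,\lam)\in\cee[z,\lam]$ is \emph{irreducible} (or at least has no factor of $z$-degree $1$), because irreducibility of the defining polynomial is equivalent to connectedness of $\mathcal R$. Here the Hessenberg structure should be exploited: using \eqref{symbol:shiftmx:sym}--\eqref{def:algcurve:sym} with $\widetilde S$ the cyclic shift, one writes $f(z^r,\lam)$ as a circulant-type determinant, and a factorization of $f$ would have to be compatible with this cyclic structure and with the Newton polygon of $f$ (which, by the corollary after Prop.~\ref{prop:fstructure}, is a single triangle with vertices $(-1,0)$, $(0,r)$, $(p,0)$ in the scalar-banded normalization, or the analogous shape in general). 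A Newton-polygon / Eisenstein-type argument at $z=0$: since $f_{-1}(\lam)\equiv f_{-1}\neq 0$ is a constant and $f_0(\lam)$ has degree $r$ in $\lam$ (genuinely, by \eqref{f:expansionlargelambda}), the lower edge of the Newton polygon in the $(z,\lam)$-plane — or rather viewing $f$ as a polynomial in $\lam$ with coefficients in $\cee[z,z^{-1}]$ — should be seen to have ``slope'' data with $\gcd$ equal to $1$, ruling out a nontrivial factorization. I expect the author's actual proof instead argues more geometrically/combinatorially using the connectedness of $\Gamma_0$ (Prop.~\ref{prop:componentsGammak} with $k=0$) together with the asymptotics \eqref{zj:infty:brute}; I would fall back on that if the pure irreducibility argument gets stuck, namely: show the component $\mathcal R'$ containing the large roots must have a branch cut over every arc of $\Gamma_0$, hence its projection covers $\Gamma_0$, and then the single small root $z_1$ has no room to live on a separate sheet because across $\Gamma_0$ the ordering by modulus \eqref{def:curves0} forces $z_1$ (the smallest root in modulus, $j=q=1$) to be glued to $z_2$ along $\Gamma_0$ by the very definition $|z_q|=|z_{q+1}|$ there.
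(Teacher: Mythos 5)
Your proposal does not close the argument, and several of its steps are flawed. First, the monodromy at infinity need not act as a single $p$-cycle on the large roots: the two-term balance $f_0(\lam)+f_p z^p\approx 0$ gives $z\sim c\,\lam^{r/p}\zeta$ with $\zeta^p=1$, and the loop $\lam\mapsto e^{2\pi i}\lam$ shifts these branches by $r$ steps, which is a single cycle only when $\gcd(r,p)=1$; in general the large roots split into $\gcd(r,p)$ orbits (and if $f_p(\lam)$ is non-constant, which is allowed here, even this picture needs modification). So you have not shown the large roots lie on one component. Second, the step ``if $\mathcal R$ is disconnected then $z_1$ is a rational function of $\lam$'' is unjustified: a component could contain $z_1$ together with some but not all of the other roots. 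Third, both of your fallbacks are incomplete or circular. The irreducibility of $zf(z,\lam)$ over $\cee(\lam)$ is essentially the statement to be proved, and you explicitly leave the Eisenstein/Newton-polygon step unexecuted. The final geometric fallback is circular: $|z_1(\lam)|=|z_2(\lam)|$ on $\Gamma_0$ does \emph{not} force the sheets to be glued along $\Gamma_0$ — two roots can have identically equal moduli with no branching at all, which is precisely the degeneracy treated in Section~\ref{subsection:Gammak2D}, and Lemma~\ref{lemma:Gammak1D}, which excludes that degeneracy in the Hessenberg case, is itself deduced \emph{from} Lemma~\ref{lemma:Rsurface}.

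You do isolate the decisive fact — that $f_{-1}(\lam)\equiv f_{-1}\neq 0$ is a nonzero constant — but you only use it through the product $\prod_j z_j(\lam)$, where, as you yourself observe, the degree count is consistent and yields nothing. The paper exploits it differently: project $\mathcal R$ onto the $z$-sphere instead of the $\lam$-sphere. For a fixed starting point, the set of $z$-values reachable by continuous paths in $\mathcal R$ is open and closed in $\overline\cee$, hence equals $\overline\cee$; in particular every component of $\mathcal R$ contains a point with $z=0$. But since $zf(z,\lam)$ has constant term $f_{-1}\neq 0$, the value $z=0$ occurs for no finite $\lam$, and by \eqref{zj:infty:brute} with $q=1$ only the single root $z_1$ tends to $0$ as $\lam\to\infty$. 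Hence $\mathcal R$ carries exactly one point with $z$-coordinate $0$, every component contains it, and connectedness follows with no monodromy computation or irreducibility argument at all.
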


\begin{proof} 
Take an arbitrary point $(\lam_0,z_0)\in\mathcal R$. Define the set
$$ \mathcal Z:=\{z\in\overline\cee\mid
\textrm{there exists $\lam\in\overline{\cee}$ and a continuous path in
$\mathcal R$ from $(\lam_0,z_0)$ to $(\lam,z)$}\}.
$$
Then $\mathcal Z$ is a subset of $\overline\cee$ which is both open and closed.
Hence it must be the entire Riemann sphere $\overline\cee$. In particular it
contains the value $z=0$. But to $z=0$ there corresponds only $\lam=\infty$
(use \eqref{f:series}--\eqref{fp} with $q=1$). 
Moreover, there is a unique such point $(\lam,z)=(\infty,0)$ on the Riemann
surface (use \eqref{zj:infty:brute} with $q=1$). Summarizing, we see that there
is a continuous path in $\mathcal R$ from $(\lam_0,z_0)$ to this unique
reference point $(\infty,0)$. Since this holds true for any
$(\lam_0,z_0)\in\mathcal R$, the connectedness of $\mathcal R$ follows.
\end{proof}

From now on we will order the roots $z_j=z_j(\lam)$, $j=1,\ldots,p+1$, by
increasing modulus as in \eqref{ordering:roots}. We also define the sets
$\Gamma_k$, $k=0,\ldots,p-1$, as in \eqref{def:Gammak}.

\begin{lemma}\label{lemma:Gammak1D} Each set $\Gamma_k$, $k=0,\ldots,p-1$ is a finite union of analytic
arcs and points in $\cee$. Hence, the sets $\Gamma_k$ can be taken as cuts for
the Riemann surface $\mathcal R$.
\end{lemma}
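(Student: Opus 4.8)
The plan is to show that $\Gamma_k$, for each $k\in\{0,\ldots,p-1\}$, is a finite union of analytic arcs and points by viewing it as the coincidence set of two moduli of branches of an algebraic function, and then to invoke Lemma~\ref{lemma:structureGammak} (whose proof applies verbatim here since $q=1$) for the local structure. First I would recall from Lemma~\ref{lemma:structureGammak} that every point $\lam_0\in\overline\cee$ has a neighborhood meeting $\Gamma_k$ in one of four shapes: the empty set, a single point, the whole neighborhood, or finitely many analytic arcs issuing from $\lam_0$. So the entire content of this lemma is to rule out the third possibility --- that $\Gamma_k$ contains an open set --- and to establish that only finitely many arcs and isolated points occur globally (i.e.\ that $\Gamma_k$ has no accumulation of arcs). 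Ruling out open sets is exactly hypothesis H2$k$, which in Part~(b) is what we are in the process of proving, so the argument must be self-contained here.

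The key step is the following. Suppose $\Gamma_k$ contained an open disk $U$; we may shrink $U$ so that it avoids $\Lam$ (which is finite --- indeed empty here by Prop.~\ref{prop:fstructure}) and so that on $U$ the roots can be labeled as analytic functions $z_{q+k}(\lam)$ and $z_{q+k+1}(\lam)$ with $|z_{q+k}(\lam)|\equiv|z_{q+k+1}(\lam)|$. Then $z_{q+k+1}/z_{q+k}$ is analytic and unimodular on $U$, hence by the maximum modulus principle it is a constant $c$ of modulus $1$; so $z_{q+k+1}\equiv c\,z_{q+k}$ as analytic functions on $U$, and therefore (by analytic continuation on the Riemann surface $\mathcal R$) these two branches of the algebraic function differ by the constant factor $c$ everywhere. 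But this would force a nontrivial decomposition: substituting, the defining polynomial $f(cz,\lam)$ and $f(z,\lam)$ would share the branch $z_{q+k}(\lam)$, and more importantly the map $z\mapsto cz$ would permute the root set, contradicting the connectedness of $\mathcal R$ established in Lemma~\ref{lemma:Rsurface} unless $c$ is a root of unity producing a genuine symmetry --- and even then one checks against \eqref{fp} with $q=1$ (the outermost coefficients are constants) that no such nontrivial scaling symmetry of the root set is possible. Hence $\Gamma_k$ has empty interior, which is H2$k$, and by Lemma~\ref{lemma:structureGammak} it is locally a finite union of arcs and a point at each $\lam_0$.

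Finally, to pass from the local statement to a \emph{globally} finite union of analytic arcs and points, I would argue by compactness together with algebraicity. By Corollary~\ref{corollary:zjinf:brute} the set $\Gamma_0$ is compact, and similarly each $\Gamma_k$ is a closed subset of $\overline\cee$ hence compact. Each $\Gamma_k$ is a real-analytic subvariety: away from the (finitely many) branch points of $\mathcal R$ and the (finitely many) points where $|z_{q+k}|=|z_{q+k+1}|$ meets a third modulus, it is cut out by the real-analytic equation $\log|z_{q+k}(\lam)|-\log|z_{q+k+1}(\lam)|=0$ in the $1$-real-dimensional stratum; an algebraic/subanalytic function has only finitely many connected components and only finitely many points where arcs can meet, so compactness forbids an infinite accumulation of arcs. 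I expect the main obstacle to be precisely this last bookkeeping --- making rigorous that the coincidence-of-moduli set of an algebraic function is globally a finite arc system rather than relying on the informal ``basic algebraic geometry'' already invoked in the excerpt; the cleanest route is to note that $\Gamma_k$ is the zero set of a single real-analytic (indeed subanalytic) function on the compact surface $\mathcal R$, after which finiteness of strata is a standard consequence of subanalyticity.
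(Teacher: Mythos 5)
Your proposal follows the same skeleton as the paper up to and including the observation that if $\Gamma_k$ contained an open disk then two branches would satisfy $z_{q+k+1}\equiv c\,z_{q+k}$ with $|c|=1$, and that the contradiction must come from the connectedness of $\mathcal R$. But your closing step diverges from the paper's and contains a gap. The paper's argument is short and direct: by connectedness of $\mathcal R$ one can analytically continue the pair $(z_{q+k},\,c\,z_{q+k})$ along $\mathcal R$ until $z_{q+k}$ becomes the branch $z_1$ near $\lam=\infty$; since $c\,z_1(\lam)\to 0$ as well, and by \eqref{zj:infty:brute} with $q=1$ there is exactly \emph{one} branch tending to $0$, one concludes $c\,z_1\equiv z_1$, hence $c=1$, a contradiction.

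Your route instead tries to promote the relation to a global symmetry $z\mapsto cz$ of the root set and then read off a contradiction from the coefficients of $f$. Two issues. First, the logic in ``the map $z\mapsto cz$ would permute the root set, contradicting the connectedness of $\mathcal R$'' is backwards: it is \emph{because} $\mathcal R$ is connected (monodromy acts transitively) that the pairwise relation propagates to the whole root multiset; the permutation is a consequence of connectedness, not in tension with it. Second, the appeal to ``\eqref{fp} with $q=1$ (the outermost coefficients are constants)'' does not by itself yield $c=1$. Comparing the polynomials $z\,f(z,\lam)$ and $(cz)f(cz,\lam)$ — which, if $z\mapsto cz$ permutes the roots, must be proportional — the constant ($z^0$) coefficients give proportionality constant $D=1$, and then one still needs the $z^1$ coefficient $f_0(\lam)$ to be nonzero (which it is, of degree $r$) to extract $c=1$. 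Constancy of the outermost coefficients alone gives only $c^{p+q}=1$. So your argument can be repaired, but as written it invokes the wrong fact; the paper's propagate-to-$\infty$ argument avoids this bookkeeping entirely.

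Your final paragraph on global finiteness via compactness and subanalyticity is more detailed than the paper, which simply defers to the local description in Lemma~\ref{lemma:structureGammak} (itself borrowed from Schmidt--Spitzer and Widom), but there is nothing wrong with it.
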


\begin{proof} The proof boils down to showing that $\Gamma_k$ does not contain
a (two-dimensional) open disk $U\subset\cee\setminus\Lam$. In that case we
would have two roots $z_i(\lam)$ and $z_{j}(\lam)$ that have identically equal
modulus in $U$. Their ratio must be a constant of modulus one. We then obtain a
contradiction by using the connectedness of the Riemann surface associated to
$f(z,\lam)=0$ (Lemma~\ref{lemma:Rsurface}), and the fact that there is only one
root $z_1(\lam)$ that goes to zero if $\lam$ goes to $\infty$ (see
\eqref{zj:infty:brute} with $q=1$).
\end{proof}

Since $q=1$, \eqref{Widom:1} now specializes to the form
\begin{equation}\label{Widom:2} \det
(T_n(A)-\lam I_{rn}) = \sum_{j=1}^{p+1} C_{j-1}(\lam)(-z_j(\lam))^{-n-1},
\end{equation}
with
\begin{equation}\label{def:CS:hess}
C_{j-1}(\lam) := \det\left( \frac{1}{2\pi i}\int_{\sigma} (A(z)-\lam
I_r)^{-1}\frac{dz}{z} \right),
\end{equation}
where $\sigma$ is a counterclockwise oriented closed Jordan curve enclosing
$z=0$ and the point $z_j(\lam)$, but none of the other roots $z_i(\lam)$,
$i\in\{1,\ldots,p+1\}$, $i\neq j$. Here we write $C_{j-1}$ rather than $C_j$ to
be consistent with \eqref{def:C0}. The function $C_{j-1}(\lam)$ is defined for
$\lam$ in the domain
$\mathcal D_j := \cee\setminus(\Gamma_{j-1}\cup\Gamma_j).$

\begin{lemma} 
\label{lemma:widom:explicit} The function $C_{j-1}(\lam)$ in
\eqref{def:CS:hess}, $j\in\{1,\ldots,p+1\}$, has only isolated zeros in
$\mathcal D_j$.
\end{lemma}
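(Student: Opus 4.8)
The plan is to argue by contradiction: if $C_{j-1}(\lam)$ vanished on an open set in $\mathcal D_j$, then by analyticity it would vanish identically on the connected component of $\mathcal D_j$ containing that open set. The key point to exploit is the Widom expansion \eqref{Widom:2}, which expresses $\det(T_n(A)-\lam I_{rn})$ as a sum of $p+1$ terms, the $j$th being $C_{j-1}(\lam)(-z_j(\lam))^{-n-1}$. The idea is that the moduli $|z_j(\lam)|$ are pairwise distinct for $\lam$ in a suitable region (because the $\Gamma_k$ are the only places where consecutive moduli coincide, by Lemma~\ref{lemma:Gammak1D}), so the terms in \eqref{Widom:2} have genuinely different exponential growth rates as $n\to\infty$; hence no term can be ``invisible'' unless its coefficient $C_{j-1}$ is forced to vanish by a more global mechanism, which the connectedness of the Riemann surface $\mathcal R$ (Lemma~\ref{lemma:Rsurface}) rules out.

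First I would fix $j$ and suppose $C_{j-1}\equiv 0$ on a component $D$ of $\mathcal D_j = \cee\setminus(\Gamma_{j-1}\cup\Gamma_j)$. By the structure of $\Gamma_{j-1}$ and $\Gamma_j$ as finite unions of analytic arcs (Lemma~\ref{lemma:Gammak1D}), one can connect $D$ to other components of $\mathcal D_j$ by crossing a single arc of $\Gamma_{j-1}$ or of $\Gamma_j$. The plan is to show that the relation $C_{j-1}\equiv 0$ \emph{propagates} across such arcs. Crossing an arc of $\Gamma_j$ interchanges the roles of $z_j$ and $z_{j+1}$ (the two roots whose moduli coincide there), so the coefficient $C_{j-1}$ on one side is related by an explicit formula coming from deformation of the contour $\sigma$ in \eqref{def:CS:hess}; similarly crossing an arc of $\Gamma_{j-1}$ relates $C_{j-1}$ to $C_{j-2}$. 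Iterating, the vanishing of one coefficient would force vanishing of a whole chain of them. Because the Riemann surface $\mathcal R$ is connected (Lemma~\ref{lemma:Rsurface}), these crossings link all the sheets together, and one eventually concludes that \emph{every} $C_{i-1}$, $i=1,\ldots,p+1$, vanishes identically on the appropriate domain, whence $\det(T_n(A)-\lam I_{rn})\equiv 0$ for all large $n$ by \eqref{Widom:2} — an absurdity, since $T_n(A)-\lam I_{rn}$ certainly has nonzero determinant for generic $\lam$.

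An alternative, possibly cleaner route avoids tracking the crossing formulas explicitly: work in the region near $\lam=\infty$, where by \eqref{zj:infty:brute} (with $q=1$) exactly one root, $z_1(\lam)$, tends to $0$ and the remaining $p$ roots tend to $\infty$, with distinct growth rates governed by the exponents $s_j$. In that regime one can, as in the proof of Prop.~\ref{prop:twoconditions}(a), compute $C_{j-1}(\lam)$ asymptotically by taking $\sigma$ to be a small circle around $z_j(\lam)$ and estimating the integral via \eqref{def:CS:hess}; the leading term is nonzero, so $C_{j-1}\not\equiv 0$ on the unbounded component of $\mathcal D_j$. If that unbounded component already meets $D$, we are done; otherwise we again use the connectedness of $\mathcal R$ (equivalently, of $\cee\setminus\bigcup_k\Gamma_k$ together with the sheet-gluing) to transport the non-vanishing from infinity into $D$.

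The main obstacle I anticipate is making the ``propagation across an arc'' step rigorous: one must verify that the determinantal quantity $C_{j-1}(\lam)$, defined by the contour integral \eqref{def:CS:hess}, transforms in a controlled way as $\sigma$ is deformed past a root of $f(z,\lam)=0$, and that this transformation never introduces a spurious identical vanishing. This requires the residue/contour-deformation bookkeeping of the Widom formula (Prop.~\ref{proposition:Widomformula}) together with the fact that $C_{j-1}$ is a nonzero \emph{analytic} (not merely meromorphic) function on each $\mathcal D_j$, so that a zero set with interior forces identical vanishing on a whole component. Once that local step is in hand, the global conclusion is immediate from the connectedness results already established.
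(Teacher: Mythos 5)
Your main argument---assume $C_{j-1}$ vanishes on an open set, use the connectedness of the Riemann surface $\mathcal R$ from Lemma~\ref{lemma:Rsurface} together with analytic continuation across the arcs of the $\Gamma_k$ to force \emph{all} the coefficients $C_{i-1}$ to vanish identically, and then contradict $\det(T_n(A)-\lam I_{rn})\not\equiv 0$ via \eqref{Widom:2}---is exactly the proof given in the paper. The ``propagation across an arc'' step you flag as the main obstacle is precisely what the paper compresses into the phrase ``analytic continuation'' (crossing $\Gamma_j$ continues $C_{j-1}$ into $C_j$, etc.), so your proposal is correct and essentially identical in approach.
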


\begin{proof}
We must show that $C_{j-1}(\lam)$ cannot be identically zero in any open disk
in $\cee\setminus\bigcup_k\Gamma_k$. By the fact that the Riemann surface is
connected (Lemma~\ref{lemma:Rsurface}) and analytic continuation, this would
imply that \emph{each} of the $C_{j-1}(\lam)$, $j=1,\ldots,p+1$, is identically
zero in $\mathcal D_j$. But then \eqref{Widom:2} would imply that
$\det(T_n(A)-\lam I_{rn})\equiv 0$ which is clearly a contradiction.
\end{proof}

Combining the above two lemmas, we have now established that H2$k$ and H3 hold
true when $T_n(A)$ has Hessenberg structure. This ends the proof of
Proposition~\ref{prop:twoconditions}(b). \end{proof}

\subsection{Proofs of Lemmas~\ref{lemma:mkpos} and \ref{lemma:equality}}
\label{subsection:proof:lemmaremark}

\begin{proof}[Proof of Lemma~\ref{lemma:mkpos}] From the definition of $m_k$
we trivially have that \begin{equation}\label{proof:convex1} m_k \geq
\frac{p-k}{p+q}\deg f_{-q}+\frac{k+q}{p+q}\deg f_p,
\end{equation}
while from the definition of $m_k^{(l)}$ it follows that
\begin{equation}\label{proof:convex2} m_k^{(l)} \leq
\frac{p-k}{p+q}\mult_{\lam-\lam_l} f_{-q}+ \frac{k+q}{p+q}\mult_{\lam-\lam_l}
f_p,
\end{equation}
for any $l=1,\ldots,L$. Summing \eqref{proof:convex2} for all $l=1,\ldots,L$
and subtracting this from \eqref{proof:convex1}, we then obtain the desired
inequality \eqref{totalsumofmasses:pos} upon using that
\begin{equation}\label{proof:convex3} \deg f_{-q}=\sum_{l=1}^L
\mult_{\lam-\lam_l} f_{-q},\qquad \deg f_{p}=\sum_{l=1}^L \mult_{\lam-\lam_l}
f_{p}.
\end{equation}

Next we check the statement about the strictness of the inequality
\eqref{totalsumofmasses:pos}. From the arguments in the above paragraph we see
that equality in \eqref{totalsumofmasses:pos} can be achieved only if equality
holds in both \eqref{proof:convex1} and \eqref{proof:convex2}. For
\eqref{proof:convex1}, this means graphically that the grid point $(k,m_k)$
lies on the line segment connecting $(-q,\deg f_{-q})$ and $(p,\deg f_{p})$.
From the definition of the numbers $m_k$ it then follows that each of the grid
points $(k,\deg f_k)$, $k=-q,\ldots,p$, must lie below this line segment, in
the sense that
\begin{equation}\label{proof:convex3bis}
\deg f_k\leq \frac{p-k}{p+q}\deg f_{-q}+\frac{k+q}{p+q}\deg f_p,
\end{equation}
for all $k=-q,\ldots,p$. Similarly, equality in \eqref{proof:convex2} implies
that
\begin{equation}\label{proof:convex4bis}
\mult_{\lam-\lam_l} f_k \geq \frac{p-k}{p+q}\mult_{\lam-\lam_l} f_{-q}+
\frac{k+q}{p+q}\mult_{\lam-\lam_l} f_p,\end{equation} for all $k=-q,\ldots,p$
and $l=1,\ldots,L$.

Now if \eqref{f:expansionlargelambda} holds then we have $\deg f_0 = r$ while
the right hand side of \eqref{proof:convex3bis} is at most $r-1$. So we cannot
have $\til m_k=0$ in that case.
\end{proof}

\begin{proof}[Proof of Lemma~\ref{lemma:equality}]
First we show that (i) implies (iii). So suppose that $\til m_k=0$ for some
$k\in\{-q+1,\ldots,p-1\}$. As observed before, we then have the inequalities
\eqref{proof:convex3bis}--\eqref{proof:convex4bis} for all $k=-q,\ldots,p$.
Summing \eqref{proof:convex4bis} for all $l$ and subtracting this from
\eqref{proof:convex3bis}, we get
\begin{equation}\label{proof:convex4} \deg f_k - \sum_{l=1}^L
\mult_{\lam-\lam_l} f_k \leq 0,
\end{equation}
for any $k=-q,\ldots,p$, where the right hand side was simplified with the help
of \eqref{proof:convex3}. On the other hand, we trivially have that
$$ \deg f_k - \sum_{l=1}^L \mult_{\lam-\lam_l} f_k \geq 0.
$$
So equality holds in \eqref{proof:convex4}. Tracing back the argument, we must
then have equality in both \eqref{proof:convex3bis} and
\eqref{proof:convex4bis}. Graphically this means that each of the grid points
$(k,\deg f_k)$, $k=-q,\ldots,p$ must lie on the line segment connecting
$(-q,\deg f_{-q})$ and $(p,\deg f_{p})$, and similarly each of the grid points
$(k,\mult_{\lam-\lam_l} f_k)$, $k=-q,\ldots,p$ must lie on the line segment
connecting $(-q,\mult_{\lam-\lam_l} f_{-q})$ and $(p,\mult_{\lam-\lam_l}
f_{p})$, for any $l=1,\ldots,L$. It is easily seen that these assertions are
equivalent to the statement in part (iii), with the rational function $h(\lam)$
given by
$$ h(\lam) := \prod_{l=1}^L (\lam-\lam_l)^{(\mult_{\lam-\lam_l}
(f_p/f_{-q}))/(p+q)}.
$$
So we showed that (i) implies (iii). The proof that (iii) implies (ii) can be
obtained (in a simpler way) by reversing the above arguments.
\end{proof}

\subsection{Proof of Lemma~\ref{lemma:strucwk}}
\label{subsection:proof:lemmamerom}

The proof of (a) follows again by mimicking the argument of Schmidt and Spitzer
\cite{SchmidtSpitzer}. For Part~(b), let
$U\subset\cee\setminus(\Gamma_k\cup\Lam)$ be a simply connected domain. For
fixed $\lam\in U$ let $j_1,j_2\in\enn\cup\{0\}$ be such that
$$|z_{q+k-j_1-1}(\lam)|<|z_{q+k-j_1}(\lam)| = \ldots =
|z_{q+k+j_2}(\lam)|<|z_{q+k+j_2+1}(\lam)|,$$ where we set $z_0\equiv 0$ and
$z_{p+q+1}\equiv \infty$ if necessary. Since $U\subset\cee\setminus\Gamma_k$ we
have that either $j_2\equiv 0$ on $U$, or else $j_1$ and $j_2$ both take a
constant value on $U$. In the case where $j_2\equiv 0$ the analyticity of
$\prod_{j=1}^{q+k} z_j(\lam)$ on $U$ follows immediately from \cite[Proof of
Prop.~3.5]{DK}. So we can focus now on the case where $j_1$ and $j_2$ are
constant on $U$. Since $U\subset\cee\setminus\Lam$, none of the roots $z_j$,
$j=q+k-j_1,\ldots,q+k+j_2$ can take the value $0$ or $\infty$. Hence by the
fact that $U$ is simply connected, there exists a labeling so that each of the
functions $z_{q+k-j_1}(\lam),\ldots,z_{q+k+j_2}(\lam)$ is analytic in $U$, with
the pairwise ratios being constants of modulus $1$. On the other hand, the
argument in \cite[Proof of Prop.~3.5]{DK} shows that $\prod_{j=1}^{q+k-j_1-1}
z_j(\lam)$ is analytic in $U$. Combining all these observations, we obtain the
required analyticity of $\prod_{j=1}^{q+k} z_j(\lam)$ in $U$.

Finally, the statement about the logarithmic derivative follows since if
$z(\lam)$ and $\tilde z(\lam)$ are analytic functions of $\lam\in U$ that are
identically equal up to a constant factor, then their logarithmic derivatives
are the same: $z'(\lam)/z(\lam) = \tilde z'(\lam)/\tilde z(\lam),$ for all
$\lam\in U.$ $\bol$

\subsection{Proofs of Proposition~\ref{prop:componentsGammak} and Theorems \ref{theorem:main1} and \ref{theorem:main1:mod}}
\label{subsection:proof:theorem:main1}

In this section we prove Prop.~\ref{prop:componentsGammak} and Theorems
\ref{theorem:main1} and \ref{theorem:main1:mod}. The proof of
Theorem~\ref{theorem:main1} will closely follow \cite{DK} and
especially~\cite{DD}.

%
Define the function $w_k$ by
\begin{equation}\label{def:wk}
w_k(\lam) = \prod_{j=1}^{q+k} z_j(\lam),\quad \lam\in\cee\setminus\Gamma_k,
\end{equation}
for $k=-q+1,\ldots,p-1$.
Occasionally we will also consider $w_k$ for the indices $k=-q$ or $k=p$.

We rewrite \eqref{def:measuresk} as \begin{equation}\label{def:measuresk:bis}
d\mu_k(\lam) = \frac{1}{2\pi i}\left(
\frac{w_{k+}'(\lam)}{w_{k+}(\lam)}-\frac{w_{k-}'(\lam)}{w_{k-}(\lam)}
\right)d\lam.
\end{equation}
From Lemma~\ref{lemma:strucwk} we know that $w_{k}'/w_{k}$ exists as a
meromorphic function on $\cee\setminus\Gamma_k$ with poles at the points of
$\Lam$. The following proposition gives more detailed information.

\begin{proposition}\label{prop:threeproperties} (The function $w_{k}'/w_{k}$).
Let $k\in\{-q+1,\ldots,p-1\}$ and recall Definitions \ref{def:mk} and
\ref{def:mkl}. Then the following statements hold true:
\begin{enumerate}
\item[(a)] For any $\lam_0\in\cee\setminus\Lam$,
there exists $\kappa\in\enn=\{1,2,3,\ldots\}$ such that
$$
\frac{w_k'(\lam)}{w_k(\lam)} = O((\lam-\lam_0)^{-1+1/\kappa}),
$$
as $\lam\to\lam_0$ with $\lam\in\cee\setminus\Gamma_k$. We have $\kappa=1$ for
all but finitely many $\lam_0$.

\item[(b)] Near $\infty$ there exists $\kappa\in\enn$ such that
$$ \frac{w_k'(\lam)}{w_k(\lam)} = \frac{\deg (f_{-q})-m_k}{\lam}+
O(\lam^{-1-1/\kappa}),
$$
as $\lam\to\infty$ with $\lam\in\cee\setminus\Gamma_k$.

\item[(c)] Near the point $\lam_l$, $l\in\{1,\ldots,L\}$, there exists $\kappa\in\enn$ such that
$$
\frac{w_k'(\lam)}{w_k(\lam)} =
\frac{\mult_{\lam-\lam_l}(f_{-q})-m_{k}^{(l)}}{\lam-\lam_l}
 + O((\lam-\lam_l)^{-1+1/\kappa}),
$$
as $\lam\to\lam_l$ with $\lam\in\cee\setminus\Gamma_k$.
\end{enumerate}
\end{proposition}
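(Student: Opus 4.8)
The plan is to prove all three parts by one and the same mechanism: locally express $w_k'/w_k$ as a finite sum of logarithmic derivatives $z_j'/z_j$ of individual root functions, and then substitute the relevant Puiseux expansion of each $z_j$ and differentiate termwise. Throughout I would rely on Lemma~\ref{lemma:strucwk}: $w_k'/w_k$ is meromorphic on $\cee\setminus\Gamma_k$ with poles only at the points of $\Lam$, and on every simply connected $U\subset\cee\setminus(\Gamma_k\cup\Lam)$ there is a labeling of the roots, analytic on $U$, with $w_k'/w_k=\sum_{j=1}^{q+k}z_j'/z_j$ there. Since by Lemma~\ref{lemma:strucwk}(a) (and Lemma~\ref{lemma:structureGammak}) the set $\Gamma_k$ is, near any $\lam_0\in\overline{\cee}$, a finite union of analytic arcs issuing from $\lam_0$ together with possibly $\lam_0$ itself, the set $\cee\setminus\Gamma_k$ near $\lam_0$ is a finite union of sectorial regions; it suffices to establish each estimate as $\lam\to\lam_0$ inside one such region, where (after passing to a local uniformizer $(\lam-\lam_0)^{1/\kappa}$ that resolves the ramification of the $z_j$) the above labeling is available, and the meromorphic continuation in Lemma~\ref{lemma:strucwk}(b) guarantees the estimate so obtained is the estimate of $w_k'/w_k$ in all of $\cee\setminus\Gamma_k$.

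For (a) I would argue that near $\lam_0\in\cee\setminus\Lam$ no root approaches $0$ or $\infty$: the polynomial $z^q f(z,\lam)$ has constant term $f_{-q}(\lam)$ and leading coefficient $f_p(\lam)$, both nonzero at $\lam_0$, so $z=0$ is never a root and no root escapes to $\infty$ there; by continuity the roots stay in a fixed annulus near $\lam_0$. Hence each $z_j$ has a convergent Puiseux expansion $z_j(\lam)=c_j\bigl(1+O((\lam-\lam_0)^{1/\kappa_j})\bigr)$ with $c_j\neq 0$, so $\log z_j$ is a Puiseux series with positive leading exponent and $z_j'(\lam)/z_j(\lam)=O((\lam-\lam_0)^{-1+1/\kappa_j})$; summing the $q+k$ relevant terms gives $w_k'/w_k=O((\lam-\lam_0)^{-1+1/\kappa})$ with $\kappa$ the least common multiple of the $\kappa_j$. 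Finally $\kappa=1$ whenever $\lam_0$ is not a branch point of the algebraic function $z(\lam)$ determined by $f(z,\lam)=0$, and there are only finitely many such branch points.

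Parts (b) and (c) are then the same computation with a global expansion replacing the local one. For (b) I would substitute \eqref{def:sj}, $z_j(\lam)=c_j\lam^{s_j}(1+O(\lam^{-1/\kappa_j}))$, giving $z_j'(\lam)/z_j(\lam)=s_j/\lam+O(\lam^{-1-1/\kappa_j})$; summing over $j=1,\ldots,q+k$ and inserting Lemma~\ref{lemma:sj}, $\sum_{j=1}^{q+k}s_j=\deg(f_{-q})-m_k$, yields the claimed expansion at $\infty$. For (c) I would do the same at $\lam_l$ with \eqref{def:sj:l}, obtaining $z_j'(\lam)/z_j(\lam)=s_j^{(l)}/(\lam-\lam_l)+O((\lam-\lam_l)^{-1+1/\kappa_j^{(l)}})$, and then use \eqref{Newton:polygon:l}, $\sum_{j=1}^{q+k}s_j^{(l)}=\mult_{\lam-\lam_l}(f_{-q})-m_k^{(l)}$, to obtain the stated formula; in both cases $\kappa$ is the least common multiple of the individual $\kappa_j$ (resp.\ $\kappa_j^{(l)}$).

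The routine part is the termwise differentiation of these Puiseux/Laurent--Puiseux series; the point that needs care, and the one I would write out in full, is the bookkeeping of the first paragraph --- that on each sector of $\cee\setminus\Gamma_k$ near $\lam_0$ the meromorphic function $w_k'/w_k$ genuinely equals a sum of $q+k$ individual logarithmic derivatives to which the Puiseux estimates apply, notwithstanding that the ordering of the roots by modulus may jump across the arcs of $\Gamma_k$ and that the roots themselves may be ramified at $\lam_0$. This is precisely what Lemma~\ref{lemma:strucwk} was set up to supply, so the main obstacle is really one of correctly invoking that lemma rather than of any new analysis.
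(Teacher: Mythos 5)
Your proposal is correct and follows essentially the same route as the paper: write $w_k'/w_k=\sum_{j=1}^{q+k}z_j'/z_j$ on each component of $\cee\setminus\Gamma_k$ (justified by Lemma~\ref{lemma:strucwk}), differentiate the Puiseux expansions \eqref{def:sj} and \eqref{def:sj:l} termwise, and invoke Lemma~\ref{lemma:sj} and \eqref{Newton:polygon:l} for the residues in parts (b) and (c). The only difference is that the paper simply cites \cite{DD} for part (a), whereas you spell out the local argument (roots bounded away from $0$ and $\infty$ off $\Lam$, hence $z_j'/z_j=O((\lam-\lam_0)^{-1+1/\kappa_j})$), which is the intended argument.
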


\begin{proof}
Part (a) can be shown as in \cite{DD}, for example. Now we turn to proving Part
(b). Recalling the notation $s_j$ in \eqref{def:sj}, we obtain
\begin{equation*} \frac{w_k'(\lam)}{w_k(\lam)} =
\sum_{j=1}^{q+k}\frac{z_j'(\lam)}{z_j(\lam)} = \frac{\sum_{j=1}^{q+k}
s_j}{\lam} + O\left(\lam^{-1-1/\kappa}\right),
\end{equation*}
as $\lam\to\infty$ with $\lam\in\cee\setminus\Gamma_k$. On account of
Lemma~\ref{lemma:sj} we then obtain part (b). The proof of part (c) follows in
a similar way from \eqref{def:sj:l}--\eqref{Newton:polygon:l}.
\end{proof}

\begin{proposition}\label{prop:mupositivemass}
For each $k=-q+1,\ldots,p-1$ we have that $\mu_k$ in \eqref{def:measuresk:bis}
is a positive measure on $\wtil\Gamma_k$ with total mass $\mu_k(\wtil\Gamma_k)
= \til m_k$.
\end{proposition}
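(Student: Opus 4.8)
The plan is to split the statement into two parts: (1) $\mu_k$ is a positive measure, and (2) its total mass equals $\til m_k$. For positivity, the key point is that $\mu_k$ as defined in \eqref{def:measuresk:bis} is the jump of the logarithmic derivative $w_k'/w_k$ across $\wtil\Gamma_k$, divided by $2\pi i$. First I would argue locally: by Lemma~\ref{lemma:strucwk}(b), on a simply connected piece of $\cee\setminus(\Gamma_k\cup\Lam)$ the product $w_k(\lam)=\prod_{j=1}^{q+k}z_j(\lam)$ is analytic and non-vanishing, so $\log|w_k(\lam)|$ is a well-defined harmonic function there. On $\wtil\Gamma_k$ precisely the roots $z_{q+k}$ and $z_{q+k+1}$ change their ordering by modulus, so $|w_k(\lam)|=\prod_{j=1}^{q+k}|z_j(\lam)|$ equals the product of the $q+k$ smallest moduli and is therefore continuous across $\wtil\Gamma_k$, i.e.\ $\log|w_{k+}|=\log|w_{k-}|$ on $\wtil\Gamma_k$. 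Taking the appropriate normal/tangential derivative identities (Cauchy--Riemann), the density of $\mu_k$ with respect to arclength is $\tfrac{1}{2\pi}$ times the jump in the normal derivative of $\log|w_k|$; since $\log|w_k|$ achieves on $\wtil\Gamma_k$ the value $\min$ over all subsets—more precisely, just off $\wtil\Gamma_k$ on either side $\log|w_k|$ is the same harmonic function but continued differently, and $\log|w_k|$ is the \emph{smallest} among the candidate branches—the jump in the normal derivative has a definite sign. This is the standard ``$\mu_k\geq 0$'' argument from \cite{DD,DK}, and I would follow it verbatim, invoking Prop.~\ref{prop:threeproperties}(a) to handle the finitely many exceptional points (branch points, where $\kappa>1$) and to guarantee that $\mu_k$ has no singular part worse than integrable.

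For the total mass, the natural tool is a contour-integration / residue argument. By definition
\[
\mu_k(\wtil\Gamma_k)=\frac{1}{2\pi i}\int_{\wtil\Gamma_k}\left(\frac{w_{k+}'(\lam)}{w_{k+}(\lam)}-\frac{w_{k-}'(\lam)}{w_{k-}(\lam)}\right)d\lam .
\]
I would interpret the integrand as the boundary values of the meromorphic function $w_k'/w_k$ on $\overline{\cee}\setminus\Gamma_k$ (Lemma~\ref{lemma:strucwk}(b)), and deform $\wtil\Gamma_k$ into a contour that is a small circle around each isolated point of $\Gamma_k$ (which lies in $\Lam$), a small circle around $\infty$, and the boundary of a tubular neighborhood of $\wtil\Gamma_k$; by the residue theorem the integral collapses to $(-1)$ times the sum of residues of $w_k'/w_k$ at $\infty$ and at the points $\lam_l\in\Lam$. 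Here I invoke Prop.~\ref{prop:threeproperties}(b) and (c): the residue at $\infty$ is $-(\deg(f_{-q})-m_k)$ and the residue at $\lam_l$ is $\mult_{\lam-\lam_l}(f_{-q})-m_k^{(l)}$. Summing, and using $\deg f_{-q}=\sum_l \mult_{\lam-\lam_l} f_{-q}$ (equation \eqref{proof:convex3}), all the $\deg f_{-q}$-type terms cancel and we are left with $\mu_k(\wtil\Gamma_k)=m_k-\sum_{l=1}^L m_k^{(l)}=\til m_k$, which is exactly Definition~\ref{def:tilmk}.

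The main obstacle I anticipate is the bookkeeping of the contour deformation: one must check that $w_k'/w_k$ is genuinely single-valued and meromorphic off $\Gamma_k$ (granted by Lemma~\ref{lemma:strucwk}(b)), that the only singularities encountered when shrinking the contour are at $\infty$ and at the $\lam_l$'s — in particular that ordinary branch points of the $z_j$ on $\wtil\Gamma_k$ contribute nothing, which follows from the $O((\lam-\lam_0)^{-1+1/\kappa})$ bound of Prop.~\ref{prop:threeproperties}(a) being integrable with vanishing contribution from shrinking circles — and that the orientation conventions for the $\pm$ sides of $\wtil\Gamma_k$ match the orientation induced on the boundary of the deleted neighborhood. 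A secondary subtlety is that $\Gamma_k$ may fail to be a smooth curve at its finitely many self-intersection/branch points, but near such a point the $\kappa$-estimate again shows the contributions are negligible, so the deformation is legitimate. Once these points are dispatched the computation is the routine residue count sketched above, and positivity plus the mass formula together give that $\mu_k$ is admissible in the sense of Definition~\ref{def:admissible}.
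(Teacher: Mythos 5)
Your proof takes essentially the same route as the paper's: positivity is handled by the superharmonicity (minimal-branch) argument deferred to \cite{DD,DK} together with the local integrability from Prop.~\ref{prop:threeproperties}(a), and the total mass is computed by a contour deformation that converts $\mu_k(\wtil\Gamma_k)$ into residues of $w_k'/w_k$ at $\infty$ and at the $\lam_l$'s, using Prop.~\ref{prop:threeproperties}(b),(c) and the cancellation from \eqref{proof:convex3}. One sign slip: with the standard convention $\res(w_k'/w_k,\infty)=-(\deg f_{-q}-m_k)$ that you quote, the correct identity is $\mu_k(\wtil\Gamma_k)=\res(w_k'/w_k,\infty)+\sum_l\res(w_k'/w_k,\lam_l)$ without the overall factor $(-1)$ (your stated formula would evaluate to $-\til m_k$); tracking the orientation of the boundary of the tubular neighborhood of $\wtil\Gamma_k$ carefully gives the right sign.
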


\begin{proof} (Compare with \cite[Prop~3.4]{DD}.) Prop.~\ref{prop:threeproperties} implies that the
density \eqref{def:measuresk:bis} is locally integrable around all the points
in $(\Lam\cup\{\infty\})\cap\wtil\Gamma_k$, and the arguments in \cite{DK} show
that $\mu_k$ is a positive measure. The statement that $\mu_k(\wtil\Gamma_k) =
\til m_k$ follows from the contour deformation
\begin{align}\nonumber \mu_k(\wtil\Gamma_k) &:= \frac{1}{2\pi i}\int_{\wtil\Gamma_k}\left(
\frac{w_{k+}'(\lam)}{w_{k+}(\lam)}-\frac{w_{k-}'(\lam)}{w_{k-}(\lam)}
\right)d\lam \\
\label{residues1}&= \frac{1}{2\pi i}\int_{\mathcal C}
\frac{w_{k}'(\lam)}{w_{k}(\lam)}\ d\lam +\sum_{l=1}^L
\res\left(\frac{w_{k}'(\lam)}{w_{k}(\lam)},\lam=\lam_l\right),
\end{align}
where $\mathcal C$ is a clockwise oriented contour surrounding
$\wtil\Gamma_k\cup\Lam$, and where $\res(h,\lam)$ denotes the residue of $h$ at
$\lam$. Equation \eqref{residues1} is valid even if one or more points
$\lam_l\in\Lam$ lie on the curve $\wtil\Gamma_k$, thanks to the local
integrability of $\mu_k$ around these points. Applying the residue theorem once
again, now for the exterior domain of $\mathcal C$, we find for the first term
in \eqref{residues1} that
\begin{equation}\label{residues2} \frac{1}{2\pi i}\int_{\mathcal C}
\frac{w_{k}'(\lam)}{w_{k}(\lam)}\ d\lam =
-\res\left(\frac{w_{k}'(\lam)}{w_{k}(\lam)},\lam=\infty\right).
\end{equation}
From \eqref{residues1}--\eqref{residues2} and the residue expressions in
Prop.~\ref{prop:threeproperties} we then obtain $$\mu_k(\wtil\Gamma_k) =
\left(m_k-\sum_{l=1}^L m_k^{(l)}\right)-\left(\deg f_{-q}-\sum_{l=1}^L
\mult_{\lam-\lam_l} f_{-q}\right) = \til m_k,$$ by virtue of
\eqref{totalsumofmasses} and \eqref{proof:convex3}.
\end{proof}

\begin{proposition}\label{prop:cauchytransformcontour}
For each $k$ we have that \begin{equation}\label{cauchytransform:hulp}
\int\frac{d\mu_k(x)}{\lam-x} = -\frac{w_k'(\lam)}{w_k(\lam)}+\sum_{l=1}^L
\frac{\mult_{\lam-\lam_l} (f_{-q})-m_{k}^{(l)}}{\lam-\lam_l},\quad\textrm{if
}\lam\in\cee\setminus\wtil\Gamma_k\end{equation} and
\begin{multline}\label{cauchytransform:muk} \int\log|\lam-x|\
d\mu_k(x) = -\log|w_k(\lam)|+\sum_{l=1}^L (\mult_{\lam-\lam_l}
(f_{-q})-m_{k}^{(l)})\log|\lam-\lam_l|+\alpha_k,
\end{multline}
if $\lam\in\cee$, for a suitable constant $\alpha_k$.
\end{proposition}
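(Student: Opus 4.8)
The plan is to deduce \eqref{cauchytransform:hulp} from the Plemelj--Sokhotski jump relation for the Cauchy transform of $\mu_k$, together with the residue/large-$\lam$ information in Proposition~\ref{prop:threeproperties}, and then obtain \eqref{cauchytransform:muk} by integration. Concretely, set
$$G(\lam) := \int\frac{d\mu_k(x)}{\lam-x}, \qquad H(\lam) := -\frac{w_k'(\lam)}{w_k(\lam)}+\sum_{l=1}^L \frac{\mult_{\lam-\lam_l}(f_{-q})-m_k^{(l)}}{\lam-\lam_l}.$$
Both $G$ and $H$ are analytic on $\cee\setminus\wtil\Gamma_k$: for $G$ this is standard Cauchy-transform theory (using that $\mu_k$ is a positive measure of finite logarithmic energy supported on $\wtil\Gamma_k$, by Proposition~\ref{prop:mupositivemass}); for $H$, Lemma~\ref{lemma:strucwk}(b) guarantees $w_k'/w_k$ is meromorphic on $\cee\setminus\Gamma_k$ with poles only at the $\lam_l$, and Proposition~\ref{prop:threeproperties}(c) shows that the subtracted rational terms exactly cancel those poles, so $H$ extends analytically across each $\lam_l\in\wtil\Gamma_k$ (more precisely $H$ is analytic on $\cee\setminus\wtil\Gamma_k$; the isolated points of $\Gamma_k$, which lie in $\Lam$, are removable).

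The first key step is to show $G-H$ has no jump across $\wtil\Gamma_k$. By definition \eqref{def:measuresk:bis} the density of $\mu_k$ on an analytic arc is $\frac{1}{2\pi i}\big(\tfrac{w_{k+}'}{w_{k+}}-\tfrac{w_{k-}'}{w_{k-}}\big)d\lam$; the Plemelj formula then gives $G_+(\lam)-G_-(\lam) = -\big(\tfrac{w_{k+}'(\lam)}{w_{k+}(\lam)}-\tfrac{w_{k-}'(\lam)}{w_{k-}(\lam)}\big)$ for $\lam$ in the relative interior of an arc of $\wtil\Gamma_k$. On the other hand $H_+(\lam)-H_-(\lam)$ equals exactly the same quantity, since the rational correction terms are single-valued across $\wtil\Gamma_k$. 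Hence $G-H$ continues analytically across every arc of $\wtil\Gamma_k$ and therefore extends to an entire function on $\cee$ (the finitely many arc-endpoints and isolated points are removable, using the mild local growth $O((\lam-\lam_0)^{-1+1/\kappa})$ from Proposition~\ref{prop:threeproperties}(a), which is integrable and hence a removable singularity for the bounded difference). The second key step is to evaluate this entire function at $\infty$: from $G(\lam)=\mu_k(\wtil\Gamma_k)/\lam+O(\lam^{-2})=\til m_k/\lam+O(\lam^{-2})$ and from Proposition~\ref{prop:threeproperties}(b) together with $\til m_k = m_k - \sum_l m_k^{(l)}$ and \eqref{proof:convex3}, one checks $H(\lam)=\til m_k/\lam+O(\lam^{-1-1/\kappa})$ as well; so $G-H\to 0$ at $\infty$, forcing $G\equiv H$ by Liouville. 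This proves \eqref{cauchytransform:hulp}.

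For \eqref{cauchytransform:muk}, I would integrate: the logarithmic potential $U(\lam):=\int\log|\lam-x|\,d\mu_k(x)$ satisfies $\partial_\lam U = \tfrac12\int\frac{d\mu_k(x)}{\lam-x} = \tfrac12 G(\lam)$ on $\cee\setminus\wtil\Gamma_k$ in the distributional sense (with the usual Wirtinger $\partial_\lam$), while the right-hand side candidate $R(\lam):=-\log|w_k(\lam)|+\sum_l(\mult_{\lam-\lam_l}(f_{-q})-m_k^{(l)})\log|\lam-\lam_l|$ has $\partial_\lam R = \tfrac12 H(\lam)$ on the same set, because $w_k$ is (locally) analytic and nonvanishing there after the branch choice of Lemma~\ref{lemma:strucwk}(b), and the correction terms remove the contributions of the points where $w_k$ has a zero or pole. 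By \eqref{cauchytransform:hulp} these two $\partial_\lam$-derivatives agree, so $U-R$ is (pluri)harmonic on $\cee\setminus\wtil\Gamma_k$; one then checks it has no jump across $\wtil\Gamma_k$ (the jump of $\log|w_{k+}|-\log|w_{k-}|$ is $\log|w_{k+}/w_{k-}|$, and on $\wtil\Gamma_k$ the relevant ratio of products of roots has modulus governed precisely by the defining relation of $\Gamma_k$, matching the jump of $U$ which is zero for a real log-potential of a measure — this is the standard fact that $U$ is continuous across the support of an absolutely continuous measure) and that it is bounded at $\infty$ (both $U$ and $R$ grow like $\til m_k\log|\lam|$, with matching coefficients by the mass computation above), hence $U-R$ is a bounded harmonic function on all of $\cee$, thus a constant $\alpha_k$. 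The main obstacle is the bookkeeping at the bad points: one must verify that the growth exponents of $w_k'/w_k$ near $\Lam$, near $\infty$, and at the branch points (Proposition~\ref{prop:threeproperties}(a)--(c)) are exactly matched by the rational correction terms so that all apparent singularities of $G-H$ and of $U-R$ are genuinely removable — this is where the Newton-polygon identities \eqref{Newton:polygon} and \eqref{Newton:polygon:l}, and the nonnegativity of $\til m_k$, do the real work.
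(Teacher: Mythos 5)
Your proposal is correct, and it takes a genuinely different (though closely related) route from the paper's own argument. The paper proves \eqref{cauchytransform:hulp} by a direct contour deformation: the integral of the jump of $w_k'/w_k$ over $\wtil\Gamma_k$ is pushed outward onto a large clockwise contour $\mathcal C$ surrounding $\wtil\Gamma_k\cup\Lam\cup\{\lam\}$, picking up the residue $-w_k'(\lam)/w_k(\lam)$ at $x=\lam$ and the residues at each $\lam_l$ supplied by Proposition~\ref{prop:threeproperties}(c); the remaining $\mathcal C$-integral is then identified with (minus) the residue at $\infty$, which vanishes by the $O(x^{-2})$ decay of $\frac{1}{\lam-x}\,w_k'(x)/w_k(x)$. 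You instead set up a matching problem: $G=\int d\mu_k/(\lam-\cdot)$ and the candidate $H$ on the right of \eqref{cauchytransform:hulp} have (i) the same Plemelj jump across $\wtil\Gamma_k$, (ii) removable singularities at the finitely many bad points by the $O((\lam-\lam_0)^{-1+1/\kappa})$ growth of Proposition~\ref{prop:threeproperties}(a),(c), and (iii) the same leading coefficient $\til m_k/\lam$ at $\infty$, so $G-H$ is entire and vanishes at $\infty$, hence zero. The computational content is identical — both proofs consume exactly the residue data of Proposition~\ref{prop:threeproperties} and the mass identity $\til m_k=m_k-\sum_l m_k^{(l)}$ — but yours is the Riemann--Hilbert/Liouville formulation, which is somewhat more structural and makes the role of each removable singularity explicit, whereas the paper's deformation is more compact. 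One small remark on your second part: since $U-R$ is real-valued and $\partial_\lam(U-R)=0$ off $\wtil\Gamma_k$, it is already \emph{locally constant} there (not merely harmonic), so after establishing continuity of $U-R$ across $\wtil\Gamma_k$ the conclusion is immediate, without appealing to bounded harmonic functions. Also, ``pluriharmonic'' has no meaning in one variable; what you want is ``locally constant'' or simply ``harmonic.'' These are cosmetic, not gaps.
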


\begin{remark} As in \cite{DD}, each
$\lam_l\in\Lam\setminus\wtil\Gamma_k$ (or $\lam_l\in\Lam$) is a removable
singularity for the right hand side of \eqref{cauchytransform:hulp} (or
\eqref{cauchytransform:muk} respectively) due to the continuity of the
corresponding left hand side.
\end{remark}

\begin{proof}[Proof of Proposition~\ref{prop:cauchytransformcontour}]
(Compare with \cite[Prop~3.5]{DD}.) We use the contour deformation
\begin{eqnarray*}\int_{\wtil\Gamma_k}\frac{d\mu_k(x)}{\lam-x} &:=& \frac{1}{2\pi i}\int_{\wtil\Gamma_k}\frac{1}{\lam-x}
\left(\frac{w_{k+}'(x)}{w_{k+}(x)}-\frac{w_{k-}'(x)}{w_{k-}(x)} \right)dx
\\ &=& \frac{1}{2\pi i}\int_{\mathcal C}
\frac{1}{\lam-x}\frac{w_{k}'(x)}{w_{k}(x)}\ dx
-\frac{w_k'(\lam)}{w_k(\lam)} + 
\sum_{l=1}^L
\frac{1}{\lam-\lam_l}\res\left(\frac{w_{k}'(x)}{w_{k}(x)},x=\lam_l\right),
\end{eqnarray*}
where $\mathcal C$ is a clockwise oriented contour surrounding
$\wtil\Gamma_k\cup\Lam\cup\{\lam\}$. The first term in the right hand side
vanishes since the residue of the integrand at infinity is zero. From the
residue expressions in Prop.~\ref{prop:threeproperties} we then get
\eqref{cauchytransform:hulp}.
Eq.~\eqref{cauchytransform:muk} follows from this by integration, see also
\cite{DK}.
\end{proof}

Finally, we can finish the proofs of Theorems~\ref{theorem:main1} and
\ref{theorem:main1:mod}:

\begin{proof}[Proof of Theorem \ref{theorem:main1}(a)--(b)] With Propositions
\ref{prop:mupositivemass} and \ref{prop:cauchytransformcontour} in place,
Theorem \ref{theorem:main1}(a)--(b) now follows in the same way as in
\cite{DD}.
\end{proof}

\begin{proof}[Proof of Theorem \ref{theorem:main1}(c)] First we show that the energy
functional $J(\vec{\nu})$ in \eqref{energyfunctional} is bounded from below. To
this end we rewrite $J(\vec{\nu})$ as
\begin{multline}\label{alternativereprE} J(\vec{\nu}) = \sum_{k=-q+1}^{p-2} \frac{\til m_k \til m_{k+1}}{2}
I\!\left(\frac{\nu_k}{\til m_k} - \frac{\nu_{k+1}}{\til m_{k+1}}\right)
+\sum_{k=-q+1}^{p-1} \frac{\til m_k}{2}(- m_{k-1}+2 m_k- m_{k+1})
I\!\left(\frac{\nu_{k}}{\til m_k}\right)\\
+\sum_{l=1}^L\sum_{k=-q+1}^{p-1} \frac{\til
m_k}{2}(m_{k-1}^{(l)}-2m_k^{(l)}+m_{k+1}^{(l)})
\left(I\!\left(\frac{\nu_{k}}{\til m_k}\right)
-2\int\log\frac{1}{|\lam-\lam_{l}|}\ \frac{d\nu_k(\lam)}{\til
m_k}\right).\end{multline} This formula is easily shown with the help of
\eqref{energyfunctional} and \eqref{totalsumofmasses}.

The terms in the first sum in \eqref{alternativereprE} are all nonnegative
\cite{Simeonov}. For the second sum in \eqref{alternativereprE}, we observe
that $-m_{k-1}+2m_k-m_{k+1}  = s_{q+k+1}-s_{q+k}>0$ by virtue of
\eqref{Newton:polygon}. So these coefficients are all nonnegative, and they are
non-zero precisely when $s_{q+k+1}>s_{q+k}$. But for such $k$ the curve
$\wtil\Gamma_k$ is compact and so $I(\nu_k)$ is bounded from below. Finally,
for the double sum in \eqref{alternativereprE} we have that
$m_{k-1}^{(l)}-2m_k^{(l)}+m_{k+1}^{(l)}=s_{q+k}^{(l)}-s_{q+k+1}^{(l)}\geq 0$,
recall \eqref{def:sj:l}--\eqref{Newton:polygon:l}. So these coefficients are
all nonnegative, and they are non-zero precisely when
$s_{q+k}^{(l)}>s_{q+k+1}^{(l)}$. But for such $k$ we have that
$\lam_l\not\in\wtil\Gamma_k$, and then standard arguments from potential theory
show that the expression between brackets in the double sum in
\eqref{alternativereprE} is minimized precisely when $\nu_{k}$ is (a constant
times) the \emph{balayage} of the Dirac point mass at $\lam_l$ onto the curve
$\wtil\Gamma_{k}$; in particular this expression is bounded from below as well
\cite[Chapter 2]{SaffTotik}.

Summarizing, we have now established that the energy functional $J(\vec{\nu})$
is bounded from below. Then the proof of Theorem \ref{theorem:main1}(c) follows
from part (b) in the same way as in \cite{DD}.
\end{proof}

\begin{proof}[Proof of Proposition~\ref{prop:componentsGammak}]
Let us first prove \eqref{componentsGammak} if $\Lam\cap C=\emptyset$. Applying
contour deformation, we then find that
$$ \mu_k(C) := \frac{1}{2\pi i}\int_C \left(
\frac{w_{k+}'(\lam)}{w_{k+}(\lam)}-\frac{w_{k-}'(\lam)}{w_{k-}(\lam)}
\right)d\lam = \frac{1}{2\pi i}\int_{\gamma}
\frac{w_{k}'(\lam)}{w_{k}(\lam)}d\lam,
$$
where $\gamma$ is the disjoint union of one or more closed Jordan curves in
$\cee\setminus\Gamma_k$. More precisely, $\gamma$ consists of a clockwise
oriented loop surrounding the outer boundary of $C$, and a counterclockwise
oriented loop inside each of the \lq holes\rq\ of $C$. Now since
$\frac{w_{k}'(\lam)}{w_{k}(\lam)} = \left(\log w_{k}(\lam)\right)'$, the
integral of this quantity over any closed Jordan curve in
$\cee\setminus\Gamma_k$ is obviously an integral multiple of $2\pi i$. So we
obtain \eqref{componentsGammak} if $\Lam\cap C=\emptyset$. The same argument
also works if $\Lam\cap C\neq\emptyset$ provided that we take into account the
residue from the pole of $w_k'/w_k$ at each $\lam_l\in\Lam\cap C$,
Prop.~\ref{prop:threeproperties}(c), thereby noting that
$\mult_{\lam-\lam_l}f_{-q}\in\enn\cup\{0\}\subset\zet$.

Finally, denote by $K$ the number of compact, connected components of
$\Gamma_k$. By summing \eqref{componentsGammak} over all such components $C$ we
get the following upper bound on $K$:
$$ K\leq \sum_C \left(\mu_k(C)+\sum_{\lam_l\in \Lam\cap C} m_k^{(l)}\right)\leq \til m_k+\sum_{l=1}^L m_k^{(l)}\leq m_k,
$$
by virtue of \eqref{totalsumofmasses}.
\end{proof}

\begin{proof}[Proof of Theorem \ref{theorem:main1:mod}]
Thanks to Lemma~\ref{lemma:strucwk}, the above proof of
Theorem~\ref{theorem:main1} yields Theorem~\ref{theorem:main1:mod} as well.
\end{proof}

\subsection{Proof of Theorem \ref{theorem:eigTn} and Proposition~\ref{prop:ptmass:finiten}}
\label{subsection:prooftheorem:eigTn}

\begin{proof}[Proof of Prop.~\ref{prop:ptmass:finiten}]
Fix $\lam_l\in\Lam$ and a subset $S\subset\{1,2,\ldots,p+q\}$ of cardinality
$|S| = q$. From \eqref{def:wS} we have that
\begin{equation}\label{proof:zeroorder}w_S(\lam) = 
O\!\left((\lam-\lam_l)^{\mult_{\lam-\lam_l} (f_{-q})-\sum_{j=1}^q
s_k^{(l)}}\right)=O\!\left((\lam-\lam_l)^{m_0^{(l)}}\right),\end{equation} as
$\lam\to\lam_l$ with $\lam\in\cee\setminus\bigcup_k \Gamma_k$, where the last
step follows from \eqref{Newton:polygon:l}. Prop.~\ref{prop:ptmass:finiten}(a)
then follows from \eqref{proof:zeroorder} and \eqref{Widom:1},  provided that
there is a disk $U$ around $\lam_l$ such that for all but finitely many
$\lam\in U$ the roots to $f(z,\lam)=0$ are pairwise different. But this
condition is generic and the case where it fails follows by an easy continuity
argument.

The proof of Prop.~\ref{prop:ptmass:finiten}(b) is similar.
\end{proof}

Theorem \ref{theorem:eigTn} can be obtained from Widom's determinant identity,
Prop.~\ref{proposition:Widomformula}, in the same way as in \cite{DD,DK}. We
outline the main steps.

\begin{proposition}\label{prop:eig:aux}
We have that
\begin{equation}\label{cauchytransform:eig}
\lim_{n\to\infty} \int_{\cee} \frac{d\mu_{0,n}(x)}{\lam-x}= \int_{\cee}
\frac{d\mu_{0}(x)}{\lam-x} + \sum_{l=1}^L \frac{m_{0}^{(l)}}{\lam-\lam_l},
\end{equation}
uniformly on compact subsets of $\cee\setminus(\Gamma_0\cup G_0)$.
\end{proposition}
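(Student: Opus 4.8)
The plan is to deduce the limit of the Cauchy transform of $\mu_{0,n}$ from Widom's determinant identity (Prop.~\ref{proposition:Widomformula}) by a standard argument in the style of \cite{DD,DK}. The key observation is that, since $\mu_{0,n}=\frac1n\sum_{\lam\in\sp T_n(A)}\delta_\lam$ with multiplicities, one has
\[
\int_\cee \frac{d\mu_{0,n}(x)}{\lam-x}=\frac1n\,\frac{d}{d\lam}\log\det\bigl(T_n(A)-\lam I_{rn}\bigr)\cdot(-1)=\frac1n\,\frac{\det'}{\det}\,,
\]
more precisely $\int d\mu_{0,n}(x)/(\lam-x)=\frac1n\,(\log\det(\lam I_{rn}-T_n(A)))'$. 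So the whole proof reduces to controlling the logarithmic derivative of $\det(T_n(A)-\lam I_{rn})$ as $n\to\infty$, uniformly on compact subsets of $\cee\setminus(\Gamma_0\cup G_0)$.

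First I would invoke Prop.~\ref{proposition:Widomformula}: for $n$ large and $\lam$ away from the (finite) set where the $z_j(\lam)$ collide, $\det(T_n(A)-\lam I_r)=\sum_{|S|=q}C_S(\lam)(w_S(\lam))^{n+\alpha}$. On $\cee\setminus(\Gamma_0\cup G_0)$ the subset $S_0=\{1,\dots,q\}$ is the \emph{strictly} dominant one, because $|w_{S_0}(\lam)|=|f_{-q}|\prod_{j=1}^q|z_j(\lam)|^{-1}$ is strictly the largest modulus among all $w_S$ precisely when $|z_q(\lam)|<|z_{q+1}(\lam)|$, i.e.\ off $\Gamma_0$; and $C_{S_0}=C_0\neq0$ off $G_0$ by \eqref{def:tilGamma0}. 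Hence, writing
\[
\det(T_n(A)-\lam I_{rn})=C_0(\lam)\,w_{S_0}(\lam)^{n+\alpha}\Bigl(1+\sum_{S\neq S_0}\frac{C_S(\lam)}{C_0(\lam)}\bigl(w_S(\lam)/w_{S_0}(\lam)\bigr)^{n+\alpha}\Bigr),
\]
the bracketed factor tends to $1$ geometrically fast, uniformly on compact subsets of $\cee\setminus(\Gamma_0\cup G_0)$, and likewise its logarithmic derivative tends to $0$ there (the $C_S/C_0$ and $w_S/w_{S_0}$ are analytic with uniformly bounded logarithmic derivatives on such compacta, after possibly excising the finitely many branch points, which are removable for the left-hand side by continuity). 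Taking $\frac1n\,\partial_\lam\log$ then gives
\[
\lim_{n\to\infty}\int_\cee\frac{d\mu_{0,n}(x)}{\lam-x}=\frac{w_{S_0}'(\lam)}{w_{S_0}(\lam)}\cdot(-1)\quad\text{(sign from }\lam\mapsto-\lam\text{)}=-\frac{w_0'(\lam)}{w_0(\lam)}\,,
\]
since $w_{S_0}(\lam)=(-1)^q f_{-q}\,w_0(\lam)^{-1}$ with $w_0=\prod_{j=1}^q z_j$ as in \eqref{def:wk}. Finally I would plug in Prop.~\ref{prop:cauchytransformcontour}, which says $-w_0'/w_0=\int d\mu_0(x)/(\lam-x)-\sum_l (\mult_{\lam-\lam_l}(f_{-q})-m_0^{(l)})/(\lam-\lam_l)$; combined with $\mult_{\lam-\lam_l}(f_{-q})=0$ (as $f_{-q}$ is a nonzero constant under the hypotheses of Prop.~\ref{prop:twoconditions}, or more generally using that these $\lam_l$ are removable) this yields exactly \eqref{cauchytransform:eig}.

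I expect the main obstacle to be the bookkeeping around the branch points and the non-genericity set: Prop.~\ref{proposition:Widomformula} is stated only when the $z_j(\lam)$ are pairwise distinct, so I need to argue that $\det(T_n(A)-\lam I_{rn})$, being a polynomial in $\lam$, extends the identity by continuity across the finitely many bad $\lam$, and that these points (together with the zeros of $C_0$ in $G_0$, which is \emph{finite} by hypothesis H3) do not obstruct the uniform convergence on compacta of $\cee\setminus(\Gamma_0\cup G_0)$ --- one simply removes small disks around them, uses uniform convergence on the remainder, and invokes that the limiting and pre-limiting Cauchy transforms are analytic there to conclude via the maximum principle (or Vitali). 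A secondary point requiring a little care is justifying that one may differentiate the asymptotic relation term-by-term; this follows from Cauchy's integral formula for derivatives applied on a slightly larger compact set, exactly as in \cite{DD,DK}.
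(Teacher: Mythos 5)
Your overall strategy matches the paper's: use Widom's determinant identity to extract the dominant term $C_{S_0}(\lam)w_{S_0}(\lam)^{n+\alpha}$, take $\frac1n\partial_\lam\log$ to conclude that the Cauchy transform of $\mu_{0,n}$ converges to $w_{S_0}'/w_{S_0}$ uniformly on compacta of $\cee\setminus(\Gamma_0\cup G_0)$, and then plug into Prop.~\ref{prop:cauchytransformcontour}. Your remarks about the finitely many collision points of the $z_j(\lam)$ and about differentiating the asymptotics via Cauchy estimates are also in the spirit of the paper's proof (and of \cite{DD,DK}). However, there are two concrete problems in the write-up.

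First, the signs are internally inconsistent. The first display claims $\int d\mu_{0,n}/( \lam-x)= -\tfrac1n\,\partial_\lam\log\det(T_n(A)-\lam I_{rn})$, but since $\det(T_n(A)-\lam I_{rn})=(-1)^{rn}\prod_i(\lam-\lam_i)$ one has $\tfrac1n\,\partial_\lam\log\det(T_n(A)-\lam I_{rn})=\tfrac1n\sum_i\tfrac{1}{\lam-\lam_i}=\int d\mu_{0,n}/(\lam-x)$ with \emph{no} extra minus sign (indeed this agrees with your own ``more precisely'' formula, since $\det(\lam I-T_n(A))$ and $\det(T_n(A)-\lam I_{rn})$ differ by a nonzero constant). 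Likewise, your later chain
$\lim = \frac{w_{S_0}'}{w_{S_0}}\cdot(-1) = -\frac{w_0'}{w_0}$
is not consistent with $w_{S_0}=(-1)^q f_{-q}\,w_0^{-1}$: if $f_{-q}$ is constant that relation gives $\frac{w_{S_0}'}{w_{S_0}}=-\frac{w_0'}{w_0}$, so the spurious ``sign from $\lam\mapsto-\lam$'' factor would actually flip this to $+\frac{w_0'}{w_0}$. The correct and simpler statement, and what the paper writes, is $\lim=\frac{w_{S_0}'}{w_{S_0}}$ with no extra factor, which then equals $-\frac{w_0'}{w_0}+\frac{f_{-q}'}{f_{-q}}$.

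Second, and more substantively, the step ``$f_{-q}$ is a nonzero constant under the hypotheses of Prop.~\ref{prop:twoconditions}'' is only correct for part (b) (the Hessenberg case, via \eqref{fp}); under part (a) the polynomial $f_{-q}(\lam)$ may well have zeros among the $\lam_l$. The appeal to removability does not fill this in: the singularities at $\lam_l$ are indeed removable for the left-hand side of \eqref{cauchytransform:eig}, but that does not by itself tell you that the term you dropped vanishes. What actually happens is that $\frac{w_{S_0}'}{w_{S_0}} = -\frac{w_0'}{w_0}+\sum_l\frac{\mult_{\lam-\lam_l}f_{-q}}{\lam-\lam_l}$, and the $\mult_{\lam-\lam_l}f_{-q}$ term exactly cancels the corresponding term in Prop.~\ref{prop:cauchytransformcontour}, leaving $\int d\mu_0/(\lam-x)+\sum_l m_0^{(l)}/(\lam-\lam_l)$. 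Keeping the $f_{-q}'/f_{-q}$ contribution makes the argument valid for both cases (a) and (b) without any appeal to removability, and this is exactly what the paper's proof does.
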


\begin{remark}
As in \cite{DD}, each $\lam_l\in\Lam\setminus(\Gamma_0\cup G_0)$ is a removable
singularity for the right hand side of \eqref{cauchytransform:eig}, due to the
continuity of the left hand side.
\end{remark}

\begin{proof}[Proof of Proposition~\ref{prop:eig:aux}]
As mentioned before, the dominant term in Prop.~\ref{proposition:Widomformula}
for $n$ large is obtained by taking $S=S_0:=\{1,2,\ldots,q\}$. Then we find in
the same way as in \cite[Proof of Corollary 5.3]{DK} and \cite[Proof of
Prop.~4.2]{DD} that
\begin{multline}\lim_{n\to\infty} \int_{\cee} \frac{d\mu_{0,n}(x)}{\lam-x}
= \lim_{n\to\infty} \frac{1}{n}\sum_{\lam_i\in\textrm{sp}\
T_n(A)}\frac{1}{\lam-\lam_i} =  \lim_{n\to\infty} \frac{1}{n}\frac{(\det
T_n(A(z)-\lam I_r))'}{\det T_n(A(z)-\lam I_r)}
\\ = \frac{w_{S_0}'(\lam)}{w_{S_0}(\lam)} \label{eigTn:4}  =
-\frac{w_0'(\lam)}{w_{0}(\lam)}+ \sum_{l=1}^L \frac{\mult_{\lam-\lam_l}
f_{-q}}{\lam-\lam_l},
\end{multline}
uniformly on compact subsets of $\cee\setminus(\Gamma_0\cup G_0)$, where the
last equality in \eqref{eigTn:4} follows from \eqref{def:wk} and
\eqref{def:wS}. Finally, Prop.~\ref{prop:cauchytransformcontour} shows that the
right hand side of \eqref{eigTn:4} equals the right hand side
of~\eqref{cauchytransform:eig}.
\end{proof}

\begin{proof}[Proof of Theorem \ref{theorem:eigTn}]
From the convergence of the Cauchy transforms in Prop.~\ref{prop:eig:aux} we
obtain
$$\mu_{0,n}\ \to \ \mu_{0}+\sum_{l=1}^L m_{0}^{(l)}\delta_{\lam_l}$$
in the weak-star sense, i.e., \eqref{weakconvergence} holds for every
continuous function $\phi$ that vanishes at infinity.

Gerschgorin's circle theorem implies that there is a compact set $K$ such that
all the measures $\{\mu_{0,n}\}_{n}$ are supported in $K$. Therefore the
assumption that $\phi$ vanishes at infinity is redundant and we obtain
\eqref{weakconvergence} for all bounded continuous functions.

Finally, we establish the claim that $G_0$ attracts isolated eigenvalues. Let
$\lam_0\in G_0$ and take a sufficiently small disk $U$ around $\lam_0$. Then
from Prop.~\ref{proposition:Widomformula} we find that
\begin{equation}\label{proof:isolated} w_{S_0}(\lam)^{-n-\alpha}\det T_n(A-\lam
I_r) = C_{S_0}(\lam)+O(c^{n}),\qquad \lam\in U,
\end{equation}
for some absolute constant $c$ with $|c|<1$. We claim that $w_{S_0}(\lam)$
tends to a non-zero constant if $\lam\to\lam_0$. This is obvious if
$\lam_0\not\in\Lam$; if $\lam_0=\lam_l\in\Lam$ then it follows by mimicking
\eqref{proof:zeroorder} and noting that $m_0^{(l)}=0$ due to our assumption
that $\lam_0=\lam_l\in G_0\subset\cee\setminus\Gamma_0$. From
\eqref{proof:isolated}, Hurwitz' theorem then implies that for all $n$
sufficiently large, there are precisely $j$ eigenvalues (counting
multiplicities) of $T_n(A)$ inside $U$, with $j$ being the multiplicity of
$\lam_0$ as a zero of $C_{S_0}=C_0$.

Finally let us note that, strictly speaking, the above applications of
Prop.~\ref{proposition:Widomformula} again require that there is a disk $U$
around $\lam_0$ such that for all but finitely many $\lam\in U$ the roots to
$f(z,\lam)=0$ are pairwise different. But this constraint can again be
circumvented by an easy continuity argument.
\end{proof}

\section{Examples}
\label{section:examples}

\subsection{Example 1: a non-degenerate case}
\label{subsection:examplenondeg}

We now illustrate our main results for a small-size example where each of the
hypotheses H1, H2$k$ and H3 holds true. Consider the symbol
\begin{equation}\label{example:symbol2x2} A(z) =
\begin{pmatrix}
b_1 & a_1+c_1z \\
c_2+a_2/z & b_2
\end{pmatrix},
\end{equation}
where we assume for convenience that each of the numbers $a_j,c_j$,
$j\in\{1,2\}$, is non-zero. Then the block Toeplitz matrix $T_n(A)$ has the
tridiagonal form
$$
T_n(A) = \left(\begin{array}{cc|cc|c}
b_1 & a_1 & & & 0 \\
c_2 & b_2 & a_2 & & \\
\hline & c_1 & b_1 & a_1 & \\
& & c_2 & b_2 & \ddots \\
\hline 0 & & & \ddots & \ddots  \\
\end{array}\right)_{2n\times 2n}.
$$
A little calculation shows that
\begin{eqnarray*} f(z,\lambda) & = &
-c_1c_2z+((b_1-\lam)(b_2-\lam)-a_1c_2-a_2c_1)-\frac{a_1a_2}{z}\\
& =: & -\frac{c_1c_2}{z} (z-z_1(\lam))(z-z_2(\lam)),
\end{eqnarray*} where as usual the roots are ordered such that $|z_1(\lam)|\leq
|z_2(\lam)|$. We now have $p=q=1$ and hence there is only one relevant set
$$ \Gamma_0 = \{\lam\in\cee\mid |z_1(\lam)|= |z_2(\lam)|\}.
$$

The coefficients $C_S(\lam)$ in Prop.~\ref{proposition:Widomformula} are
labeled by index sets $S\subset\{1,2\}$ with $|S|=1$; hence $S=\{1\}$ or
$S=\{2\}$. It can be shown that
$$ C_S(\lam) = -\frac{1+\frac{c_1}{a_1}z_i}{c_1c_2(z_i-z_j)},
$$
where we put $S=\{i\}$, $i\in\{1,2\}$, and where $j\in\{1,2\}$ is the index
different from $i$. In particular, we have $C_S(\lam) = 0$ if and only if
$z_i(\lam) = -a_1/c_1$.
From \eqref{example:symbol2x2}, this implies in turn that
$$ \det\begin{pmatrix}
b_1-\lam & 0 \\
* & b_2-\lam
\end{pmatrix} =0.
$$
Hence, we can only have $C_S(\lam) = 0$ if $\lam = b_1$ or $\lam = b_2$. For
these two special $\lam$-values, the second solution to $f(z,\lam)=0$ is
$z_j(\lam)=-a_2/c_2$; therefore we obtain that
\begin{equation}\label{example:symbol2x2:Gamma0} G_0 =
\left\{\begin{array}{cl}
\{b_1,b_2\}, & \quad\textrm{ if $|a_1/c_1|\leq |a_2/c_2|$},\\
\emptyset, & \quad\textrm{ otherwise}.\end{array}\right.
\end{equation}

\begin{figure}[t]
\begin{center}
\includegraphics[scale=0.35,angle=270]{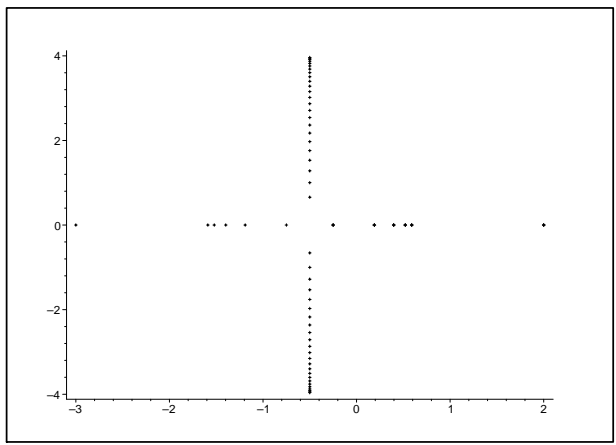}
\end{center}
\caption{Eigenvalues for the symbol \eqref{example:symbol2x2}, with the values
\eqref{example:symbol2x2:values} and $n=30$, computed in Maple with 60 digit
precision. There are $2n = 60$ eigenvalues in total of which $48$ live on the
vertical line segment $[x_4,x_3]\approx[-1/2-3.97i,-1/2+3.97 i]$, $10$ live on
the horizontal line segment $[x_1,x_2]\approx [-1.61,0.61]$, and the final $2$
are outliers lying extremely close to $\lam=-3$ and $\lam=2$, respectively,
cf.~\eqref{example:symbol2x2:Gamma0:bis}.} \label{fig:Widom:Vb3}
\end{figure}
We now turn the above discussion into a numerical example by setting
\begin{equation}\label{example:symbol2x2:values} a_1=1/2,\ a_2=-3,\ b_1=2,\ b_2=-3,\ c_1=4,\ c_2=-3.
\end{equation}
In this case, the discriminant of $f(z,\lam)=0$ equals
$$\lambda^4+2\lambda^3+16\lambda^2+15\lambda-63/4, $$ whose four roots are
$x_1\approx -1.61$, $x_2\approx 0.61$, $x_3\approx -1/2+3.97 i$ and $x_4\approx
-1/2-3.97 i$. These are the \emph{branch points} of $f(z,\lam)=0$. It turns out
that the set $\Gamma_0\subset\cee$ consists of two line segments, one
vertically connecting the branch points $x_3$ and $x_4$ and the other one
horizontally connecting the branch points $x_1$ and $x_2$. The two line
segments intersect at $\lam=-1/2$.

For the values \eqref{example:symbol2x2:values}, the first case in
\eqref{example:symbol2x2:Gamma0} applies and so we have
\begin{equation}\label{example:symbol2x2:Gamma0:bis}G_0 = \{b_1,b_2\} = \{-3,2\}.
\end{equation} Thus for $n$ large,
$T_n(A)$ has an isolated eigenvalue near $\lam=-3$ and near $\lam=2$, both of
multiplicity one. Finally, Theorem~\ref{theorem:main1} implies that the
limiting eigenvalue distribution $\mu_0$ of $T_n(A)$ is precisely the
equilibrium measure of the set $\Gamma_0$.

These considerations are confirmed in Figure~\ref{fig:Widom:Vb3}.

\subsection{Example 2: a degenerate case, I}
\label{subsection:exampledegenerate}

Next we study an example where both H1 and H2 fail. Consider the symbol
\cite[page 321]{Widom1}
\begin{equation}\label{Widom:vb1} A(z) = \begin{pmatrix} 0 & z^{-1}-z
\\ 1+z & z^{-1} + z^2\end{pmatrix}.
\end{equation}
Then one has that \begin{equation}\label{example:pointmass2} f(z,\lam) = \det
(A(z)-\lam I_2) = (1-\lam) z^2+z+(\lam^2-1)+(-1-\lam)z^{-1}.\end{equation}
Hence hypothesis~H1 is violated.

Observe that the following factorization holds,
\begin{equation}\label{example:pointmass3} f(z,\lam) =
\frac{1}{z}((1-\lam) z+1)(z^2-\lam-1).\end{equation} Hence the three roots are
given by $\{z_1(\lam),z_2(\lam),z_3(\lam)\} =
\{\frac{1}{\lam-1},(\lam+1)^{1/2},-(\lam+1)^{1/2}\}$, where the labeling should
be taken according to increasing absolute value. Since two of the three roots
have the same absolute value in the entire complex $\lam$-plane,
hypothesis~H2$k$ is violated as well.

Let us first check the point sources. The set $\Lam=\{\lam_l\}_{l=1}^L$ in
\eqref{def:laml} is such that $L=2$ and $\lam_1 = -1$, $\lam_2 = 1$, and the
relevant data are given by
$$
\left.\begin{array}{c|cccc}
 & k=-1 & k=0 & k=1 & k=2 \\
\hline m_k & 1 & 2 & 3/2 & 1 \\
m_k^{(1)} & 1 & 1/2 & 0 & 0\\
m_k^{(2)} & 0 & 0 & 0 & 1 \\
\til m_k & 0 & 3/2 & 3/2 & 0
\end{array}.\right.
$$
From this we see that the measures $\mu_0$ and $\mu_1$ both have total mass
$\til m_0 = \til m_1= 3/2$.

\begin{figure}[t]
\begin{center}
\includegraphics[scale=0.35,angle=270]{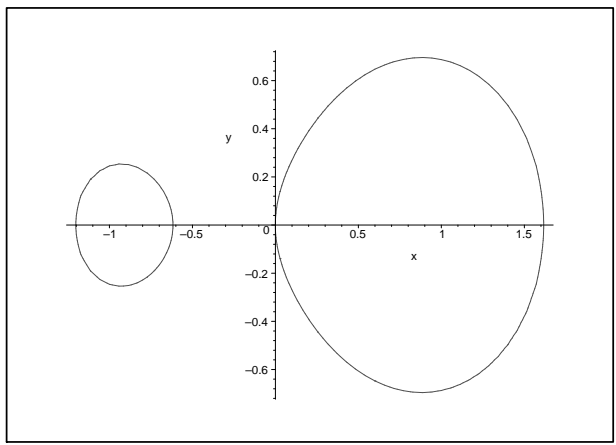}
\end{center}
\caption{Support $\Gamma_0$ of the limiting eigenvalue distribution for the
symbol \eqref{Widom:vb1}. It consists of two closed Jordan curves in the
complex $\lam$-plane and an isolated point at $\lam=-1$.} \label{fig:Widom:Vb1}
\end{figure}

\begin{figure}[t]
\begin{center}
\includegraphics[scale=0.35,angle=270]{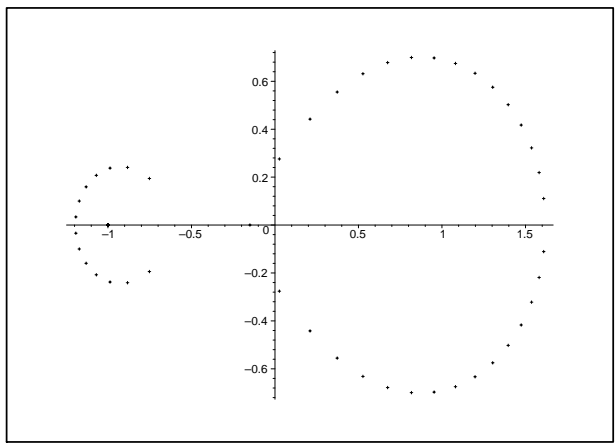}
\end{center}
\caption{Eigenvalues for $n=30$ for the symbol \eqref{Widom:vb1}, computed in
Maple with 60 digit precision. There are $2n = 60$ eigenvalues in total of
which $15$ coalesce at $\lam=-1$. Note that the eigenvalues closely approximate
the curve in Figure \ref{fig:Widom:Vb1}.} \label{fig:Widom:Vb2}
\end{figure}

Taking into account Section~\ref{subsection:Gammak2D},
cf.~\eqref{def:Gammak:mod}, the curves $\Gamma_0$ and $\Gamma_1$ are defined as
$$ \Gamma_1 = \{\lam\in\cee\mid |\lam+1|^{1/2}|\lam-1|=1\},\qquad \Gamma_0 = \Gamma_1\cup\{-1\}.
$$ The curve $\wtil\Gamma_0=\wtil\Gamma_1$ is plotted in Figure
\ref{fig:Widom:Vb1}. On this curve, the measures $\mu_0$ and $\mu_1$ are
defined according to the density \eqref{def:measuresk}.

The failure of H2 implies, as mentioned before, that the definitions of H3 and
$G_0$ need to be modified. Let us do this with an ad-hoc calculation. The
coefficients $C_S(\lam)$ in Prop.~\ref{proposition:Widomformula} are labeled by
index sets $S\subset\{1,2,3\}$ with $|S|=1$; it can be shown that
$$ C_S(\lam) = \left\{\begin{array}{ll}
-\frac{\lam^2}{(\lam+1)(1-(\lam+1)(\lam-1)^2)},&
\textrm{ if $S$ labels the root }z(\lam)=\frac{1}{\lam-1},\\
\frac{1\pm (\lam+1)^{1/2}}{2(\lam+1)(1\pm(1-\lam)(\lam+1)^{1/2})},&\textrm{ if
$S$ labels the root }z(\lam)=\pm(\lam+1)^{1/2}.\end{array}\right.
$$
After some simplifications, \eqref{Widom:1} then reduces to \begin{equation*}
\det T_n(A(z)-\lam I_2) =
\frac{\lam \left(\lam^2-1\right)^{n+1}}{(\lam+1)(\lam^2-\lam-1)}\\
-\left\{\begin{array}{ll}\frac{(\lam+1)^{n/2}}{\lam^2-\lam-1},&\textrm{ if $n$
is even,}\\ \frac{\lam(\lam+1)^{(n-1)/2}}{\lam^2-\lam-1},&\textrm{ if $n$ is
odd}.\end{array}\right.
\end{equation*}
From this, it is easy to see that Theorem~\ref{theorem:eigTn} can indeed be
applied. Thus the limiting eigenvalue distribution of $T_n(A)$ consists of the
absolutely continuous part $\mu_0$ on $\wtil\Gamma_0$, and a point mass of mass
$1/2$ at $\lam_1=-1$. Moreover, $T_n(A)$ does not have isolated eigenvalues for
$n$ large, neither for $n$ even nor for $n$ odd. This reproduces the result in
\cite[page 321]{Widom1}. The comparison with the eigenvalues of $T_n(A)$ for
$n=30$ is shown in Figure \ref{fig:Widom:Vb2}.

Finally, the energy functional \eqref{energyfunctional} reduces to
\begin{equation}\label{energyfunctional:ex2}
I(\nu_0)+I(\nu_1) - I(\nu_0,\nu_1) -\frac 12 \int\log\frac{1}{|\lam+1|}\
d\nu_1(\lam)- \int\log\frac{1}{|\lam-1|}\ d\nu_1(\lam).
\end{equation}
Theorem~\ref{theorem:main1:mod} implies that $(\mu_0,\mu_1)$ is the minimizer
of this functional over all pairs of measures $(\nu_0,\nu_1)$ supported on
$\wtil\Gamma_0=\wtil\Gamma_1$, with total masses $\tilde m_0=\tilde m_1=3/2$.
The last two terms in \eqref{energyfunctional:ex2} can be interpreted as an
attraction of $\mu_1$ towards the points $\lam=-1$ and $\lam=1$.

\subsection{Example 3: a degenerate case, II}
\label{subsection:exampledegenerateII}


We discuss a variant of the previous example. Consider the symbol
\begin{equation}\label{similarity:exact}
A(z)=\begin{pmatrix} z^2-1 & z^{-1}-z \\ 0 & z^{-1}+1
\end{pmatrix}.\end{equation} The algebraic equation $f(z,\lam)=0$
is again given by \eqref{example:pointmass3}. The triangularity of $A(z)$
implies the following factorization for the finite $n$ determinants:
\begin{equation*}\label{similarity:finite:n} \det T_n (A(z)-\lam I_2) =
\det T_n (z^2-1-\lam)\ \det T_n\left(z^{-1}+1-\lam\right)=
(-1-\lam)^n(1-\lam)^n.
\end{equation*}
So the limiting eigenvalue distribution of $T_n(A)$ has a pure point spectrum
with point masses at $\lam=-1$ and $\lam=1$. In particular, it is \emph{not}
related to the measure $\mu_0$ on the set $\wtil\Gamma_0$ in
Fig.~\ref{fig:Widom:Vb1}. Thus Theorem~\ref{theorem:eigTn} breaks down in this
case. The reason is that several coefficients in Widom's formula are
identically zero, and so H3 (actually a modification thereof since H2 fails) is
not valid.

It is straightforward to generalize the above idea: Whenever the symbol
$A(z)-\lam I_r$ is block upper triangular, or can be reduced into block upper
triangular form by means of suitable row and column transformations, then $\det
T_n(A(z)-\lam I_r)$ factorizes into two smaller-size block Toeplitz
determinants. For such symbols $A(z)$, the hypotheses H1 and H2$k$ typically
hold true while H3 and Theorem~\ref{theorem:eigTn} both fail.

Finally, one may argue that the above counterexamples to
Theorem~\ref{theorem:eigTn} are harmless, in the sense that in each case the
eigenvalue problem for $T_n(A)$ can be reduced into two smaller-size eigenvalue
problems. One may wish to construct more interesting examples for which
Theorem~\ref{theorem:eigTn} fails. One way to construct such examples is from
the symbol
\begin{equation}
A(z) = \begin{pmatrix} a(z) & 0 & a_{1,3}(z) \\
0 & a(z) & a_{2,3}(z) \\
a_{3,1}(z) & a_{3,2}(z) & a_{3,3}(z)
\end{pmatrix},
\end{equation}
where $a(z)$, $a_{i,3}(z)$ and $a_{3,i}(z)$, $i=1,2,3$,  are given Laurent
polynomials in $z$. By suitably fine-tuning these Laurent polynomials, and
especially the exponents of their highest and lowest degree terms in $z$, one
may construct symbols $A(z)$ for which H1 and H2$k$ hold true, H3 and
Theorem~\ref{theorem:eigTn} both fail, and for which no reduction to block
upper triangular form is possible. We leave the details to the interested
reader.

\section{Concluding remarks}
\label{section:conclusion}

\indent\textbf{1. Generalizations.} The main Theorem~\ref{theorem:eigTn} was
stated under the following condition: either $\cee\setminus\Gamma_0$ is
connected and $\Gamma_0$ does not have any interior points; or $T_n(A)$ is a
Hessenberg matrix. It is an open problem to generalize this theorem to other
classes of banded block Toeplitz matrices.\smallskip

\textbf{2. Applications.} We expect that our main
Theorems~\ref{theorem:main1}/\ref{theorem:main1:mod} and \ref{theorem:eigTn}
may be used to obtain some results in the theory of multiple and matrix
orthogonal polynomials on the real line. In fact, recently several papers
appeared \cite{DGK,Roman,RZ} which apply the results of Duits and Kuijlaars
\cite{DK} on scalar banded Toeplitz matrices, to the context of multiple
orthogonal polynomials. The recurrence relations of these polynomials lead to a
banded Hessenberg matrix. Typically this matrix is not \emph{exactly} Toeplitz
but only \emph{asymptotically}. More generally, the orthogonality weights may
be varying with $n$, which leads to so-called \emph{locally} Toeplitz matrices
\cite{DGK,Roman,RZ}. We anticipate that more applications of this type may
arise in the future, possibly leading to block (rather than scalar) Toeplitz
matrices. A first application of this kind is given in \cite{BDK}. Finally, we
also anticipate that our results could be used in the context of matrix
orthogonal polynomials on the real line, see e.g.~\cite{Dette,DLS}.

\section*{Acknowledgment}

I would like to thank Martin Bender, Maurice Duits and Arno Kuijlaars for
useful discussions.



\begin{thebibliography}{10}
\bibitem{BaxterSchmidt}
    G. Baxter and P. Schmidt,
    Determinants of a certain class of non-Hermitian Toeplitz matrices,
    Math. Scand. 9 (1961), 122--128.
\bibitem{BDK}
    M. Bender, S. Delvaux and A.B.J. Kuijlaars,
    Multiple Meixner-Pollaczek polynomials and the six-vertex
    model,
    J. Approx. Theory 163 (2011), 1606--1637.
\bibitem{BG}
    A. B\"ottcher and S.M. Grudsky,
    Spectral Properties of Banded Toeplitz Matrices,
    SIAM, Philadelphia, PA, 2005.
\bibitem{BS2}
    A. B\"ottcher and B. Silbermann,
    Introduction to Large Truncated Toeplitz Matrices,
    Universitext, Springer-Verlag, New York, 1998.
\bibitem{BS}
    A. B\"ottcher and B. Silbermann,
    Invertibility and Asymptotics of Toeplitz Matrices,
    Akademie-Verlag, Berlin, 1983.
\bibitem{DD}
    S. Delvaux and M. Duits,
    An equilibrium problem for the limiting eigenvalue distribution of rational Toeplitz matrices,
    SIAM. J. Matrix Anal. Appl. 31 (2010), 1894--1914.
\bibitem{Dette}
    H. Dette and B. Reuther,
    Random Block Matrices and Matrix Orthogonal Polynomials,
    J. Theor. Probab. (2008), DOI 10.1007/s10959-008-0189-z.
\bibitem{DGK}
    M. Duits, D. Geudens and A.B.J. Kuijlaars,
    A vector equilibrium problem for the two-matrix model in the quartic/quadratic case,
    Nonlinearity 24 (2011), 951--993. 
\bibitem{DK}
    M. Duits and A.B.J. Kuijlaars,
    An equilibrium problem for the limiting eigenvalue distribution of banded Toeplitz matrices,
    SIAM J. Matrix Anal. Appl. 30 (2008), 173--196.
\bibitem{DLS}
    A. Dur\'an, P. L\'opez-Rodriguez and E.B. Saff,
    Zero asymptotic behaviour for orthogonal matrix polynomials,
    J. Anal. Math. 78 (1999), 37--60.
\bibitem{Goss}
    D. Goss, Basic structures of function field arithmetic, Ergebnisse
    der Mathematik und ihrer Grenzgebiete (3), [Results in Mathematics and Related Areas (3)],
    35, Springer-Verlag, Berlin, 1996.
\bibitem{Hirschman}
    I.I. Hirschman, Jr.,
    The spectra of certain Toeplitz matrices,
    Illinois J. Math. 11 (1967), 145--149.
\bibitem{Roman}
    A.B.J. Kuijlaars and P. Rom\'an,
    Recurrence relations and vector equilibrium problems arising from a model of non-intersecting squared Bessel paths,
    J. Approx. Theory 162 (2010), 2048--2077.
\bibitem{NS}
    E.M. Nikishin and V.N. Sorokin,
    Rational Approximations and Orthogonality,
    Amer. Math. Soc., Providence, RI, 1991.
\bibitem{Rans}
    T. Ransford,
    Potential Theory in the Complex Plane,
    London Math. Soc. Stud. Texts 28,
    Cambridge University Press, Cambridge, UK, 1995.
\bibitem{SaffTotik}
    E.B. Saff and V. Totik,
    Logarithmic Potentials with External Field,
    Springer-Verlag, Berlin, 1997.
\bibitem{SchmidtSpitzer}
    P. Schmidt and F. Spitzer,
    The Toeplitz matrices of an arbitrary Laurent polynomial,
    Math. Scand. 8 (1960), 15--38.
\bibitem{Simeonov}
    P. Simeonov,
    A weighted energy problem for a class of admissible weights,
    Houston J. Math. 31 (2005), 1245--1260.
\bibitem{Ullman}
    J.L. Ullman,
    A problem of Schmidt and Spitzer,
    Bull. Amer. Math. Soc. 73 (1967), 883--885.
\bibitem{Widom0}
    H. Widom,
    On the eigenvalues of certain Hermitian operators,
    Trans. Amer. Math. Soc. 88 (1958), 491--522.
\bibitem{Widom1}
    H. Widom,
    Asymptotic behavior of block Toeplitz matrices and determinants,
    Advances in Math. 13 (1974), 284--322.
\bibitem{Widom2}
    H. Widom,
    Asymptotic behavior of block Toeplitz matrices and determinants, II,
    Advances in Math. 21 (1976), 1--29.
\bibitem{RZ}
    L. Zhang and P. Rom\'an,
    Asymptotic zero distribution of multiple orthogonal polynomials associated with Macdonald
    functions, J. Approx. Theory 163 (2011), 143--162. 
\end{thebibliography}
\end{document}